\numberwithin{equation}{section}
\theoremstyle{plain}
\newtheorem{thm}{Theorem}[section]
\newtheorem{lem}[thm]{Lemma}
\newtheorem{prop}[thm]{Proposition}
\newtheorem{cor}[thm]{Corollary}
\theoremstyle{definition}
\newtheorem{defn}[thm]{Definition}
\newtheorem{defprop}[thm]{Definition-Proposition}
\theoremstyle{remark}
\newtheorem{rem}[thm]{Remark}
\newtheorem*{rem*}{Remark}
\newcommand{\be}{\begin{equation}}    
\newcommand{\ee}{\end{equation}}    
\newcommand{\beu}{\begin{equation*}}    
\newcommand{\eeu}{\end{equation*}}    
\newcommand{\bea}{\begin{eqnarray}}    
\newcommand{\eea}{\end{eqnarray}}    
\newcommand{\beaa}{\begin{eqnarray*}}    
\newcommand{\eeaa}{\end{eqnarray*}}    
\newcommand{\bmx}{\begin{pmatrix}}    
\newcommand{\emx}{\end{pmatrix}}
\newcommand{\g}{{\mathfrak g}}    
\newcommand{\m}{{\mathfrak m}}
\newcommand{\HH}{{\mathcal H}}
\newcommand{\IC}{{\mathbf{IC}}^{\cdot}}
\newcommand{\nn}{\nonumber}
\newcommand{\rank}{{\rm rank}}
\newcommand{\Z}{{\mathbb Z}}
\newcommand{\N}{{\mathbb N}}
\newcommand{\C}{{\mathbb C}}
\renewcommand{\P}{{\bf P}}
\newcommand{\Q}{{\mathbb Q}}    
\newcommand{\R}{{\mathbb R}}
\newcommand{\id}{{\mathrm{id}}}
\newcommand{\uq}{{\mathrm{U}_q}}
\newcommand{\ueps}{{\mathrm{U}_\varepsilon}}
\newcommand{\uqslt}{{\uq(\widehat{\mathfrak{sl}}_2})}
\newcommand{\uepsslt}{{\ueps(\widehat{\mathfrak{sl}}_2})}
\newcommand{\uqgh}{{\uq(\widehat\g)}}
\newcommand{\uqZlg}{{\mathrm{U}_q^{\mathbb Z}(\mathrm L\g)}}
\newcommand{\uqlg}{{\uq(\mathrm L\g)}}
\newcommand{\uepslg}{{\ueps(\mathrm L\g)}}
\newcommand{\uqg}{{\uq(\g)}}
\newcommand{\uepsh}[2]{{\ueps(\widehat{\mathfrak{#1}}_{#2})}}
\newcommand{\Cx}{\mathbb C^*}
\newcommand{\btp}{\begin{tikzpicture}[baseline=0pt,scale=0.9,line width=0.25pt]}    
\newcommand{\etp}{\end{tikzpicture}}
\newcommand{\range}[2]{\llbracket #1,#2 \rrbracket}
\newcommand{\atp}[1]{}
\newcommand{\ie}{i.e. }
\newcommand{\eg}{e.g. }
\newcommand{\M}{{\mathfrak M}}
\newcommand{\LL}{{\mathfrak L}}
\newcommand{\V}{{\mathcal V}}
\newcommand{\Vtilde}{\widetilde{\mathcal V}}
\newcommand{\Vtildeo}{\widetilde{\mathcal V}^\circ}
\newcommand{\Vhat}{\widehat{\mathcal V}}
\newcommand{\W}{{\mathcal W}}
\newcommand{\Hom}{\mathrm{Hom}}
\newcommand{\out}{\mathrm{out}}
\newcommand{\inn}{\mathrm{in}}
\newcommand{\x}[2]{{\bf x}^{#1}_{#2}}
\newcommand{\kk}[2]{{\bf k}^{#1}_{#2}}
\newcommand{\F}{\mathcal F}
\newcommand{\ZZ}{\mathfrak Z}
\newcommand{\Sp}{\mathrm{Sp}}
\newcommand{\GrMod}[1]{\mathsf{Mod}(#1)}
\newcommand{\ev}{\mathrm{ev}}
\newcommand{\ch}{\mathrm{ch}}
\newcommand{\Td}{\mathrm{Td}}
\newcommand{\RR}{\mathrm{RR}_\bullet}
\newcommand{\tens}[1]{\underset{#1}{\otimes}}
\newcommand{\Mhat}{\widehat{\M}}
\newcommand{\Mtilde}{\widetilde{\M}}
\newcommand{\Mtildeo}{\widetilde{\M}^\circ}
\newcommand{\usl}[1]{\mathrm{U}({\mathfrak{sl}}_{#1})}
\date{February 2014\\
2010 \emph{Mathematics Subject Classification}. Primary 17B37; Secondary 16G20, 14L30.}
\author{R. Zegers}
\address{\vspace{-.20cm}
\hspace{-.20cm}
Univ Paris-Sud, Laboratoire de Physique Th\'eorique, UMR8627, Orsay, F-91405;}
\address{
CNRS, Orsay, F-91405.}
\email{
robin.zegers@th.u-psud.fr}
\begin{document} 
{\flushright{\small{LPT-Orsay-14-09
}}\\}

\vskip.5cm
\title[$q,t$-characters and $\ell$-weight spaces]{$q,t$-characters and the structure of the $\ell$-weight spaces of standard modules over simply laced quantum affine algebras
}

\begin{abstract}
We establish, for all simply laced types, a $q,t$-character formula, first conjectured by Nakajima. It relates, on one hand, the structure of the $\ell$-weight spaces of standard modules regarded as modules over the Heisenberg subalgebra of some quantum affine algebra and, on the other hand, the $t$-dependence of their $q,t$-characters, as originally defined in terms of the Poincar\'e polynomials of certain Lagrangian subvarieties in quiver varieties of the corresponding simply laced type. Our proof is essentially geometric and generalizes to arbitrary simply laced types earlier representation theoretical results for standard $\uqslt$-modules.
\end{abstract}

\maketitle

\section{Introduction}
Let $\g$ be a simple finite dimensional Lie algebra and $\widehat{\g}$ be the corresponding (untwisted) affine Ka\v c-Moody algebra. Viewing $\uqgh$ as the \emph{quantum affinization} of $\uqg$ provides a triangular decomposition $(\mathrm{U}^-_q(\widehat \g), \mathrm{U}^0_q(\widehat\g), \mathrm{U}^+_q(\widehat\g))$ with respect to which every finite dimensional $\uqgh$-modules of type 1 turns out to be highest weight in a generalized sense, allowing one to classify all the finite dimensional simple $\uqgh$-modules of type 1, in terms of their so-called highest \emph{$\ell$-weights}, \cite{CPsl2,CPbook}. The latter are encoded in a $\rank(\g)$-tuple of monic polynomials ${\bf P} \in \C[X]_1^{\rank(\g)}$ referred to as \emph{Drinfel'd polynomials}. Indeed, every finite dimensional type 1 $\uqgh$-module admits a direct decomposition into finite dimensional $\ell$-weight spaces whose associated $\ell$-weights and dimensions are fully encoded in a generalization to the quantum affine context of the classical Lie theoretical characters known as \emph{$q$-characters} \cite{FR} -- see also \cite{Knight}. An algorithm due to Frenkel and Mukhin computes these $q$-characters for \emph{minuscule} $\uqgh$-modules, that is for any module with a single dominant $\ell$-weight \cite{FM}. In contrast with what the classical representation theory of Lie algebras might suggest, quantum affine algebras admit many minuscule modules beyond their fundamental modules and, as a consequence, $q$-characters now constitute some of the most efficient tools in the study of finite dimensional modules over quantum affine algebras. An important gap in our understanding of these objects remains however, which $q$-characters alone cannot fill. This gap is related to the existence of \emph{thick} $\uqgh$-modules, by which we mean modules such that some of their $\ell$-weight spaces have dimension strictly greater than one. The Heisenberg subalgebra $\mathrm{U}^0_q(\widehat\g)$ of $\uqgh$, the quantum affine analogue of the Cartan subalgebra of $\g$, is not necessarily realized semisimply on thick $\uqgh$-modules and the actual structure of the latter as $\mathrm{U}^0_q(\widehat\g)$-modules is unkown in general -- see \cite{YZ} though, for a discussion in the case of standard $\uqslt$-modules. It is this structure that we investigate in the present paper, for every simply laced $\g$.

In \cite{Nakajima}, Nakajima gave a K-theoretical construction of quantum affine algebras of simply laced type and of their standard and simple modules. His constructions rely on quiver varieties and generalize previous similar constructions by Ginzburg and Vasserot in type $\mathfrak a$, based on Springer resolutions of partial flag varieties \cite{GV, Vass}. In that context, it is natural to define combinatorial objects refining $q$-characters, the so-called \emph{$q,t$-characters}. Following \cite{NakqtConj, NakqtAxiom}, we define the $q,t$-character of any given standard module $M({\bf P})$, ${\bf P} \in \C[X]_1^{\rank(\g)}$, by
\be \chi_{q,t}(M({\bf P})) = \sum_\rho \mathscr P(\mathfrak L_\rho, t) m_\rho \, ,\ee
where the sum runs over the connected components $\mathfrak L_\rho$ of the generalized Springer fibre $\mathfrak L$ -- see section \ref{sec:gradedquiver} for definitions --, $\mathscr P(\mathfrak L_\rho, t)$ is the Poincar\'e polynomial of $\mathfrak L_\rho$, \ie
\be\mathscr P(\mathfrak L_\rho, t) = \sum_{i=0}^{\dim_{\mathbb R} \mathfrak L_\rho} t^i \dim H_i (\mathfrak L_\rho, \mathbb C)\, ,\ee
and $m_\rho$ is a monomial in the $q$-character $\chi_{q, 1}(M({\bf P}))$, associated with a given $\ell$-weight of $M({\bf P})$. Perhaps the most impressive application of the theory of $q, t$-characters is a Kazhdan-Lusztig type result allowing one to compute the coefficients involved in the change of basis from standard to simple $\uqgh$-modules in the Grothendieck ring of the category of finite dimensional $\uqgh$-modules, \cite{NakqtConj, NakqtAxiom}. As a corollary, Nakajima gave a modified version of the Frenkel-Mukhin algorithm which virtually allows one to determine the $q,t$-character of any finite dimensional simple $\uqgh$-module \cite{NakqtConj}.

Independently of those powerful results, Nakajima conjectured in \cite{NakqtConj} that the $t$-dependence of $q,t$-characters should be related to the natural Jordan filtration of $\ell$-weight spaces by the Heisenberg subalgebra $\mathrm{U}^0_q(\widehat\g)$ of $\uqgh$. To be more precise, let $(\kk{\pm}{i, \pm m})_{i \in I; m \in \N}$ denote the generators of $\mathrm{U}^0_q(\widehat\g)$ giving rise to the $\ell$-weight space decomposition $V= \bigoplus_\rho V_\rho$ of any finite dimensional type 1 $\ueps({\widehat{\g}})$-modules $V$ with $\varepsilon \in \C^*$ and define, for all $\rho$, the filtration $\{0\} = F_{-1}V_\rho \subseteq F_0V_\rho \subseteq F_1 V_\rho \subseteq \dots $, where, for all $k \in \N$,
\be\label{jordanfiltr} F_{k} V_\rho := \bigcap_{\substack{i \in I\\m \in \N}} \ker \left (\kk{\pm}{i, \pm m} - k_{i, \pm m}^\pm (\rho) \, \id \right )^{k+1}\cap V_\rho\, .\ee
$F_\bullet V_\rho$ clearly stabilizes at some $n_\rho \in \N$ that we shall refer to as the \emph{length} of the Jordan filtration. Then the purpose of the present paper is to establish the following
\begin{thm}\label{thm:main}
Let $\varepsilon \in \C$ be transcendental over $\Q$ and $M({\bf P})$ be a standard module. Letting $M({\bf P})= \bigoplus_\rho M({ \bf P})_\rho$ be its $\ell$-weight space decomposition and denoting by $n_\rho \in \N$ the length of the Jordan filtration $F_\bullet M({\bf P})_\rho$, we have
\be \label{qtfiltr} \chi_{\varepsilon,t}(M({\bf P})) = \sum_\rho \sum_{k =0}^{n_\rho} t^{2\sigma_\rho(k)} \dim(F_kM({\bf P})_\rho /F_{k-1}M({\bf P})_\rho) m_\rho\, ,\ee
where $\sigma_{\rho}$ can be chosen as the permutation of $\range{0}{n_\rho}$ defined by $\sigma_\rho(k) =\lfloor n_\rho/2 \rfloor - k/2$ for even $k$ and $\sigma_\rho(k) = \lfloor n_\rho/2 \rfloor + \lceil k/2 \rceil$ for odd $k$.
\end{thm}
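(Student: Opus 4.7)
The strategy is to invoke the very definition $\chi_{\varepsilon, t}(M(\mathbf{P})) = \sum_\rho \mathscr{P}(\mathfrak L_\rho, t)\, m_\rho$ together with the (implicit) linear independence of the monomials $m_\rho$ to reduce the theorem to the per-component identity
\begin{equation*}
\mathscr P(\mathfrak L_\rho, t) = \sum_{k=0}^{n_\rho} t^{2\sigma_\rho(k)} \dim\bigl(F_k M(\mathbf P)_\rho / F_{k-1} M(\mathbf P)_\rho\bigr).
\end{equation*}
A short combinatorial unpacking of $\sigma_\rho$ shows that the right-hand side is precisely the polynomial one would obtain if the Jordan type of the nilpotent Heisenberg action on $M(\mathbf P)_\rho$ decomposed this space into symmetric Lefschetz strings $t^{n_\rho-(\lambda-1)} + t^{n_\rho-(\lambda-3)} + \cdots + t^{n_\rho+(\lambda-1)}$, one per Jordan block of size $\lambda$; for instance, a single block of maximal size $n_\rho+1$ yields the Poincar\'e polynomial $1+t^2+\cdots+t^{2n_\rho}$ of $\mathbb P^{n_\rho}$. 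The theorem is therefore equivalent to exhibiting, on each $H_\ast(\mathfrak L_\rho, \mathbb C) \cong M(\mathbf P)_\rho$, a hard-Lefschetz-type $\mathfrak{sl}_2$-action whose lowering operator matches, up to a nonzero scalar, the nilpotent part of the Heisenberg action.

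I would produce this $\mathfrak{sl}_2$-action in two steps. First, using Nakajima's realization of standard modules on (equivariant) homology of Lagrangians in quiver varieties, identify $M(\mathbf P)_\rho$ with $H_\ast(\mathfrak L_\rho, \mathbb C)$, transport the Jordan filtration $F_\bullet M(\mathbf P)_\rho$ accordingly, and express each Drinfeld Cartan generator $\kk{\pm}{i,\pm m}$ restricted to $M(\mathbf P)_\rho$ as cap product with an explicit polynomial in tautological Chern classes. The constant part of this operator is precisely the $\ell$-weight scalar $k^\pm_{i,\pm m}(\rho)$ read off from the torus-fixed data indexed by $\rho$, whereas the remainder strictly raises cohomological degree and is therefore automatically nilpotent. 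Second, I would show that for a suitable (generic) choice of $(i,m)$ this nonconstant part coincides, up to a nonzero scalar, with the lowering operator of the Lefschetz $\mathfrak{sl}_2$-action on $H_\ast(\mathfrak L_\rho, \mathbb C)$ provided by the relative hard Lefschetz theorem applied to the projective quiver morphism $\pi : \M(\mathbf v, \mathbf w) \to \M_0(\mathbf v, \mathbf w)$ considered by Nakajima.

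The main obstacle is this second step. Since the components $\mathfrak L_\rho$ are in general singular and only quasi-projective, classical hard Lefschetz is not directly available, and one must invoke the relative version of Beilinson--Bernstein--Deligne--Gabber adapted to Nakajima's (semi)small context, together with a precise identification of the relevant Chern class as a Lefschetz class for $\pi$. A related difficulty is to ensure that the single permutation $\sigma_\rho$ records the Jordan structure uniformly across all Heisenberg generators: because the $\kk{\pm}{i,\pm m}$ mutually commute on each $\ell$-weight space it is enough that a single generic generator detect the full Jordan type, but verifying this genericity in arbitrary simply laced type will require a careful torus-equivariant argument, guided by the known $\uqslt$ case of \cite{YZ}. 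Once these two ingredients are in place, the combinatorial identity of the first paragraph closes the proof.
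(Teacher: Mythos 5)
Your outline has the right global shape --- identify each $\ell$-weight space with $H_\bullet(\LL(V,W)[\rho],\C)$, express the Heisenberg generators as cap products with tautological Chern characters (this is Proposition \ref{prop:kaction}), separate the scalar eigenvalue from the nilpotent remainder, match that remainder to a Lefschetz lowering operator, and read off the Jordan type as Lefschetz strings --- and your combinatorial translation of Jordan blocks into symmetric $t$-strings is exactly what the paper uses. But at the place you yourself flag as ``the main obstacle'' the paper goes a genuinely different geometric route, and your proposal leaves that step unclosed.

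First, the paper does \emph{not} invoke relative hard Lefschetz / BBDG for the proper morphism $\pi:\M(V,W)\to\M_0(V,W)$ (nor any semismallness). Its key new ingredient is Lemma \ref{lem:Vample}: the line bundle $\det i_\rho^*\V_k(\lambda_\rho)^\vee$, where $\lambda_\rho=\sup\Sp(\rho,V)$ is the \emph{top} weight of the tautological bundle on the fixed-point variety, is ample on the smooth quasi-projective variety $\M(V,W)[\rho]$. This is established by exhibiting $\M(V,W)[\rho]$ as a quotient by a principal bundle of a locally closed subvariety of a space fibered over $T^*\mathrm{Gr}$ on which the analogous determinant is manifestly ample (Proposition \ref{prop:principalGklbundle}); ampleness then descends. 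Your ``precise identification of the relevant Chern class as a Lefschetz class for $\pi$'' is vague precisely where this explicit bundle and its ampleness must be produced; the bundle in question is not $\mathcal O_{\M(V,W)}(1)$, and its ampleness on the fixed locus is a new and nontrivial fact, not a consequence of general quiver-variety theory.

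Second, the step tying the nilpotent part of $\kk{+}{i,1}-k^+_{i,1}[\rho]$ to this ample class is extracting the \emph{leading $\varepsilon$-power} of $\xi_{i,\rho}(a)$ (Lemma \ref{prop:leading}.\emph{ii.}), and then a transcendence argument: $\det\bigl(\xi_{i,\rho}(a)^{\cap m}\bigr)$ is $\lambda_\rho(a)^{b_{n+m}}$ times a nonzero Laurent polynomial $P_m(\varepsilon)$, so $\varepsilon$ transcendental over $\Q$ forces it nonzero. Your ``careful torus-equivariant argument'' / ``genericity'' gestures at this but does not identify the mechanism, which is where the hypothesis $\varepsilon$ transcendental actually enters.

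Third, the singular case is handled not by the decomposition theorem for $\pi$, but by proving $H_\bullet(\LL(V,W)[\rho],\Q)=IH_\bullet(\LL(V,W)[\rho],\Q)$ via a Bia\l{}ynicki-Birula decomposition (Proposition \ref{prop:BBdecomp}), the Carrell--Goresky generalized homology basis formula, Kirwan's $IH$ basis formula, and normal nonsingularity of the BB-cell inclusions; then absolute hard Lefschetz for intersection homology of the \emph{projective} variety $\LL(V,W)[\rho]$ with respect to the ample class above. Whether a relative-Lefschetz/semismallness argument could be made to yield the same $\mathfrak{sl}_2$-action aligned with the \emph{same} specific Chern class is not obvious, and you do not carry it out. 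Finally, you omit both the reduction across multiple $q$-segments (Proposition \ref{prop:sl2gen}, via K\"unneth and $\Delta$ on $\usl 2$) and the explicit Jordan-basis construction of Proposition \ref{prop:Jordan}, which is how the paper shows the single operator $\kk{+}{i,1}$ already computes the full Jordan filtration (\ref{filt1}); your worry about a ``single generic generator'' is resolved there, not by an additional genericity check.
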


Note that, for a given ${\bf P} \in \mathbb C[X]_1^{\rank (\g)}$, the formula (\ref{qtfiltr}) indeed holds for all but a finite number of algebraic values of $\varepsilon \in \C$. In the case $\g = \mathfrak a_1$, this theorem specializes to a result of \cite{YZ}. 
Unlike the combinatorial/representation theoretical proofs in \cite{YZ}, our proof of theorem \ref{thm:main} is essentially geometric. It relies on the ampleness of a family of tautological locally free sheaves over quiver varieties involved in the K-theoretical realization of the Heisenberg subalgebra $\mathrm{U}^0_q(\widehat\g)$ of $\uqgh$ and on the hard Lefschetz theorem for their first Chern class. The latter allows one to endow each $\ell$-weight space $M({ \bf P})_\rho$ with a canonical $\mathrm{U}(\mathfrak{sl}_2)$-module structure, closely related to the Lefschetz primitive decomposition of the cohomology of the associated connected component of the generalized Springer fibre. Theorem \ref{thm:main} then follows by relating this $\mathrm{U}(\mathfrak{sl}_2)$-module structure to the Jordan filtration $F_\bullet M({\bf P})_\rho$ and by regarding the Poincar\'e polynomial $\mathscr P(\mathfrak L_\rho, t)$ as the (Lie theoretical) character of the $\mathrm{U}(\mathfrak{sl}_2)$-module $M({ \bf P})_\rho$ so obtained. It is worth emphasizing that an approach in the spirit of \cite{YZ} yields a formidable combinatorial problem, which is why we chose to recast it in geometric terms, in order to benefit from powerful results in geometry such as the hard Lefschetz theorem. Moreover, this approach provides a geometric interpretation of an equally crucial $\mathrm{U}(\mathfrak{sl}_2)$-module structure of the $\ell$-weight spaces appearing in \cite{YZ}, where its construction though seemed rather \emph{ad hoc}.

The paper is organized as follows. We recall the definition of quantum loop algebras in terms of Drinfel'd current generators in section \ref{sec:QLA}. In section \ref{sec:quiver}, we review the construction of Nakajima's quiver varieties and the associated K-theoretical realization of quantum affine algebras of simply laced type. Readers familiar with Nakajima's work can skip to section \ref{sec:gradedquiver} where, after reviewing well known material about graded quiver varieties, we prove the ampleness of certain tautological sheaves over graded quiver varieties. This new result is crucial to section \ref{sec:proof}, where we eventually establish theorem \ref{thm:main}. The (geometric) definition of standard modules over quantum affine algebras and the geometric realization of its $\mathrm{U}^0_q(\widehat\g)$-module structure can be found in section \ref{sec:stdmodules}.  The latter essentially follows \cite{Nakajima} except  that, for the sake of completeness and given its key role in proving theorem \ref{thm:main}, we slightly elaborate -- see in particular lemmas \ref{lem:ra}, \ref{lem:RR} and \ref{lem:star} -- on the proof of proposition \ref{prop:kaction} which provides a realization of the $\mathrm{U}^0_q(\widehat\g)$-module structure of $\ell$-weight spaces in terms of the Chern characters of the above mentioned tautological sheaves. In section 7, we give examples of applications of theorem \ref{thm:main}.

\subsection*{Notations and conventions} Troughout this paper, we adopt the following notations
\be [n]_q:= \frac{q^{n}-q^{-n}}{q-q^{-1}} \, , \quad \qquad [n]_q!:= \prod_{m=1}^n [m]_q\, , \quad \qquad {n \choose p}_{\!\! q} := \frac{[n]_q!}{[p]_q! [n-p]_q!} \, , \ee
for every $n, p \in \mathbb N$ and every $q \in \mathbb C^*$. For every integer $n \in \mathbb N$, we let $S_n$ denote the symmetric group of bijections on $\range1 n$ and every $\sigma \in S_n$ acts as $(1, \dots , n) \mapsto (\sigma(1), \dots ,\sigma(n))$.

Let $G$ be a complex algebraic linear group. With every quasi-projective $G$-variety $X$, we associate the Grothendieck group $K^G(X)$ (resp. $K_G(X)$) of the category $\mathsf{Coh}^G(X)$ (resp. $\mathsf{Loc}^G(X)$) of $G$-equivariant coherent (resp. locally free) sheaves of $\mathcal O_X$-modules. We shall systematically identify locally free sheaves -- and sheaves of sections of algebraic vector bundles -- with the isomorphism classes they define in $K_G(X)$ and, whenever $X$ is non-singular, the elements of $K_G(X)$ with their images in $K^G(X)$ under the isomorphism $K_G(X) \cong K^G(X)$. Let $R(G):= K^G(\mathrm{point})$ be the representation ring of $G$. $K^G(X)$ is an $R(G)$-module. For convenience and provided it does not give rise to confusion, we shall always denote with the same symbol a sheaf and its class in the corresponding Grothendieck group. The class of the structure sheaf of $X$ will thus be simply denoted by $\mathcal O_X$ or even $1$, when the context permits. We shall similarly denote by $f_*$, $f^*$ and $\otimes$ the derived functors $Rf_*$, $Lf^*$ and $\stackrel{L}{\otimes}$ respectively. For any given sheaf $\mathcal F$, we shall denote by $\mathcal F^\vee$ its dual sheaf. $\mathrm{Pic}^G(X)$ will denote the Picard group of isomorphism classes of $G$-equivariant invertible sheaves of $\mathcal O_X$-modules. Finally, for every $G$-equivariant vector bundle $\mathcal V$, define
\be \det \mathcal V := {\bigwedge}^{\rank \mathcal V} \mathcal V \in \mathrm{Pic}^G(X) \qquad \mathrm{and} \qquad  {\bigwedge}_z \mathcal V := \sum_{k=0}^{\rank(\mathcal V)} z^k {\bigwedge}^k \mathcal V  \in K^G(X)[z]\, . \ee
\newpage

\section{Quantum loop algebras}
\label{sec:QLA}
Let $\g$ be a simple, simply laced complex Lie algebra and let $I:= \range{1}{\rank \,\,\g}$. Denote by $\langle, \rangle$ the invariant inner product on $\g$, normalized in such a way that the square length of the maximal root be $2$. Let $(\alpha_i)_{i \in I}$ and $(\omega_i)_{i\in I}$ respectively denote the sets of simple roots and of fundamental weights of $\g$ and set $Q:= \bigoplus_{i \in I} \mathbb Z \alpha_i$, $Q^+:= \bigoplus_{i \in I} \mathbb N \alpha_i$, $P:= \bigoplus_{i \in I} \mathbb Z \omega_i$ and $P^+:= \bigoplus_{i \in I} \mathbb N \omega_i$. The Cartan matrix ${\bf C}=(c_{ij})_{i,j \in I}$ is given by $c_{ij} = \langle \alpha_i , \alpha_j\rangle$. The \emph{quantum loop algebra} associated with $\g$ is the $\mathbb C(q)$-algebra $\uqlg$ generated by $(\x{\pm}{i,m})_{i \in I, m \in \mathbb Z}$ and $(\kk{\pm}{i,m})_{i \in I, m \in \pm \mathbb N}$, subject to the defining relations
\be \kk{+}{i,0}\kk{-}{i,0} = \kk{-}{i,0}\kk{+}{i,0} = 1\, , \qquad [\kk{(\pm)_1}{i}(z_1), \kk{(\pm)_2}{j}(z_2)] =0 \, ,\ee
\be (z_1 - q^{(\pm)_1 c_{ij}} z_2) \kk{(\pm)_2}j(z_1) \x{(\pm)_1}{i}(z_2) = (q^{(\pm)_1 c_{ij}}z_1 -  z_2) \x{(\pm)_1}{i}(z_2) \kk{(\pm)_2}j(z_1) \, , \ee
\be (z_1 - q^{\pm c_{ij}} z_2) \x{\pm}i(z_1) \x{\pm}{j}(z_2) = (q^{\pm c_{ij}}z_1 -  z_2) \x{\pm}{j}(z_2) \x{\pm}{i}(z_1)\, ,  \ee
\be [\x{+}{i}(z_1), \x{-}{j}(z_2)] = \frac{\delta_{ij}}{q-q^{-1}} \left [ \delta \left (\frac{z_2}{z_1} \right ) \kk{+}{i}(z_2) - \delta \left (\frac{z_1}{z_2} \right ) \kk{-}{i}(z_1) \right ]\, ,\ee
\be \sum_{\sigma \in S_{1-c_{ij}}}\sum_{p=0}^{1-c_{ij}} {{1-c_{ij}}\choose{p}}_{\!\!q} \,\,\x{\pm}{i}(z_{\sigma(1)}) \cdots \x{\pm}{i}(z_{\sigma(p)}) \x{\pm}{j}(z) \x{\pm}{i}(z_{\sigma(p+1)}) \cdots \x{\pm}{i}(z_{\sigma(1-c_{ij})}) =0 \, ,\ee
written here in terms of the formal series
\be \delta(z) := \sum_{m \in \mathbb Z} z^m\, ,\ee
\be \x{\pm}{i}(z) :=  \sum_{m \in \mathbb Z} \x{\pm}{i,m} z^{-m}\, ,\ee
\be \kk{\pm}{i}(z) := \sum_{m \in \mathbb N} \kk{\pm}{i,\pm m} z^{\mp m}\, .\ee
In order to specialize the deformation parameter $q$ to some value $\varepsilon \in \mathbb C^*$, we also introduce the $\mathbb C[q, q^{-1}]$-subalgebra $\uqZlg$ of $\uqlg$, generated by the
\be \x{\pm}{i,m, n} := \frac{(\x{\pm}{i,m})^n}{[n]_q!} \ee
for all $i \in I$, $m \in \mathbb Z$ and $n \in \mathbb N^*$, together with the coefficients in the $z^{\pm 1}$ expansion of
\be \exp \left ( - \sum_{m \in \mathbb N^*} \frac{{\bf h}_{i,\pm m}}{[m]_q} z^{\mp m}\right )\ee
where, for all $i \in I$ and $m \in \mathbb Z^*$, the ${\bf h}_{i,m}$ are defined by
\be \kk{\pm}{i} := \kk{\pm}{i,0} \exp \left (\pm (q-q^{-1}) \sum_{m \in \mathbb N^*} {\bf h}_{i, \pm m } z^{\mp m}\right ) \, .\ee
$\uqZlg$ constitutes an integral form of $\uqlg$, \ie $\uqlg \cong \uqZlg \otimes_{\C[q, q^{-1}]} \C(q)$. For every $\varepsilon \in \C^*$, we thus define $\uepslg$, the \emph{specialization} at $\varepsilon$ of $\uqlg$, as $\uqZlg \otimes_{\C[q,q^{-1}]} \C$, where $\C$ is a $\C[q,q^{-1}]$-module through $q \mapsto \varepsilon$.

It is well known that $\uqlg$ is a quotient of $\uqgh$ and that every finite dimensional simple $\uqgh$-module is obtained by twisting some $\uqlg$-modules by some $\C(q)$-algebra automorphism of $\uqgh$. We therefore restrict our attention to $\uqlg$ for the rest of the paper.

\section{Quiver varieties}
\label{sec:quiver}
General quiver varieties pertain to a class of \emph{Geometric Invariant Theory} (GIT) quotients studied by King in \cite{King}. We first describe the construction of these GIT quotients in general. We then give a short review of a particular class of quiver varieties introduced by Nakajima, \cite{Nakajima}, in order to study the representation theory of quantum affine algebras.

\subsection{Algebraic quotients} \label{GIT} Suppose that $X = \mathrm{Spec} \, A$ is an affine algebraic variety and that $G$ is a reductive algebraic group acting on $X$. Let $X/\!/ G := \mathrm{Spec} \, A^G$ be the affine quotient of $X$ by $G$. With every character $\theta \in \Hom(G, \C^*)$ we associate a $G$-linearized line bundle $\mathscr L_\theta:= X \times \C_\theta \in \mathrm{Pic}^G(X)$, where $\C_\theta$ is the one dimensional $G$-module with $G$-action given by $ G \times \C_\theta \ni (g, z) \mapsto g.z:= \theta(g) z \in \C_\theta$. Since $G$ is reductive the graded algebra of invariant sections
\be A^\theta_\bullet :=\bigoplus_{n \in \N} H^0(X, {\mathscr L}_\theta^{\otimes n})^{G}\ee
is finitely generated and we can define the algebraic quotient $X/\!/_\theta \, G:= \mathrm{Proj} \,\, A^\theta_\bullet$. The latter is quasi-projective, the inclusion $A^G = A^\theta_0 \hookrightarrow A^\theta_\bullet$ inducing a projective morphism
\be \pi_\theta : X/\!/_\theta \, G \rightarrow X/\!/  G\, .\ee

GIT provides a more geometric description of $X/\!/_\theta \, G$ relying on the following notions of (semi)stability,
\begin{defn}
Let $\theta \in \Hom(G, \C^*)$ be a character.  We say that a point $x \in X$ is \emph{$\theta$-semistable} iff there exist $n \in \N^*$ and $s \in A^\theta_n$ such that $s(x) \neq 0$. We shall denote by $X^{\theta}$ the set of $\theta$-semistable points in $X$. We say that a point $x \in X$ is \emph{$\theta$-stable} iff it is $\theta$-semistable and, in addition, its stabilizer $G_x$ is finite and its orbit $G.x$ is closed in $X^\theta$. We shall denote by $X^{\underline{\theta}}$ the set of $\theta$-stable points in $X$.
\end{defn}
We define the GIT equivalence $\sim_G$ over $X^{\theta}$ by setting, for every $x, y \in X^{\theta}$, $x \sim_G y$ iff $\overline{G.x} \cap \overline{G.y} \cap X^\theta \neq \emptyset$, where a bar denotes the Zariski closure.
The relevance of this equivalence relation is clear from the following
\begin{prop}
\label{prop:GIT-Quot}
$X/\!/_\theta \, G$ is isomorphic to the quotient $X^\theta/ \sim_G$.
\end{prop}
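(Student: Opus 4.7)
The plan is to construct a canonical $G$-invariant morphism $\pi : X^\theta \to X/\!/_\theta\, G$ by gluing affine GIT quotients and then to identify its fibers with the equivalence classes of $\sim_G$. For each $n \geq 1$ and each $s \in A^\theta_n$, the locus $X_s := \{x \in X : s(x) \neq 0\}$ is $G$-invariant, because $s$ is semi-invariant in the sense that $s(g.x) = \theta(g)^n s(x)$, and it is affine with coordinate ring $A[s^{-1}]$. By definition of $\theta$-semistability, these opens cover $X^\theta$. On the other side, $\mathrm{Proj}\, A^\theta_\bullet$ is covered by the affine opens $D_+(s) = \mathrm{Spec}\, (A^\theta_\bullet[s^{-1}])_0$, and the natural identification $(A^\theta_\bullet[s^{-1}])_0 \cong (A[s^{-1}])^G$ exhibits each $D_+(s)$ as the classical affine GIT quotient $X_s /\!/ G$. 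These affine quotient morphisms glue to yield the desired $\pi$, which is surjective because $G$ is reductive.

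Next I would identify the fibers of $\pi$ with the classes of $\sim_G$. Classical affine GIT asserts that on an affine $G$-variety with $G$ reductive, two points map to the same point of the affine quotient if and only if the closures of their orbits meet. Applied chart by chart, this says that $x, y \in X_s$ satisfy $\pi(x) = \pi(y)$ iff $\overline{G.x} \cap \overline{G.y} \cap X_s \neq \emptyset$. To globalize, suppose $\pi(x) = \pi(y) = p$ for $x, y \in X^\theta$. Choosing any chart $D_+(s)$ through $p$ forces $s(x), s(y) \neq 0$, since the image of $s$ in the residue field at $p$ is nonzero, so $x$ and $y$ lie in a common chart $X_s$ and the affine statement applies. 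Conversely, if $\overline{G.x} \cap \overline{G.y} \cap X^\theta \neq \emptyset$, then for any $z$ in this intersection there exists $s \in A^\theta_n$ with $s(z) \neq 0$, and semi-invariance of $s$ upgrades this to non-vanishing at $x$ and $y$ as well, placing all three points in $X_s$ and reducing the question to the affine chart case.

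The main technical point is this last propagation of non-vanishing: the semi-invariance of $s$ means that its vanishing locus is a $G$-invariant closed subset of $X$, so if $s(z) \neq 0$ for a point $z \in \overline{G.x}$, then $s$ cannot vanish identically on $G.x$, and semi-invariance then forces $s(x) \neq 0$ as well. Once this reduction is in place, the induced map $X^\theta / \sim_G\, \to X/\!/_\theta\, G$ is a well-defined bijection that, being locally the affine GIT quotient $X_s \to D_+(s)$ on each chart, is an isomorphism of quasi-projective varieties.
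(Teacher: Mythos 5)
The paper states Proposition \ref{prop:GIT-Quot} without proof; it is a standard fact of geometric invariant theory, implicit in the references to King and Mumford cited in the surrounding text. Your proof is a correct and fairly complete reconstruction of that standard argument: covering $X^\theta$ by the $G$-invariant principal opens $X_s$, identifying $(A^\theta_\bullet[s^{-1}])_0 \cong (A[s^{-1}])^G$ (which uses reductivity of $G$ via the exactness of taking invariants, applied to the filtered colimit defining $A[s^{-1}]$), gluing the affine quotient maps $X_s \to X_s/\!/G = D_+(s)$, and then reducing the fiberwise identification to the affine case via the observation that the vanishing locus of a semi-invariant is $G$-invariant and closed. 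The one step worth stating a bit more carefully is the surjectivity of $(A^\theta_\bullet[s^{-1}])_0 \to (A[s^{-1}])^G$: given $h = f/s^k \in (A[s^{-1}])^G$, the representative $f$ need not be semi-invariant a priori, and one should argue via the Reynolds operator (or via $G$-equivariance of the colimit $A[s^{-1}] = \varinjlim (A \xrightarrow{\cdot s} A \to \cdots)$, in which the $k$-th copy of $A$ carries the $\theta^{nk}$-twisted action) that a semi-invariant representative can always be produced. With that small addition, the argument is complete.
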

Note that there always exists a geometric quotient of $X^{\underline{\theta}}$ by $G$ -- see \eg \cite{Mumford}. That geometric quotient constitutes a Zariski-open subset of $X/\!/_\theta \, G$. The following is a direct consequence of Luna's \'etale slice theorem \cite{Luna}.
\begin{prop}
\label{prop:canampleline}
Suppose that
\begin{itemize}
\item[-] $X^{\underline{\theta}} = X^\theta \neq \emptyset$;
\item[-] $X^\theta$ is non-singular;
\item[-] and the action of $G$ on $X^\theta$ is free.
\end{itemize}
Then, the following assertions hold:
\begin{enumerate}
 \item[i.] $X^\theta$ is a principal $G$-bundle over $X/\!/_\theta \, G$;
\item[ii.] $X/\!/_\theta \, G$ is non-singular and isomorphic to $X^\theta /G$;
\item[iii.] the line bundle $\mathscr L_\theta$ descends to a line bundle
\be \mathcal O_{X/\!/_\theta \, G}(1) := X^\theta \times_G \left (\mathscr L_\theta |_{X^\theta} \right )\ee
which is ample relative to $\pi_\theta : X/\!/_\theta \, G \rightarrow X/\!/G$ and therefore ample over $X/\!/_\theta \, G$ since $X/\!/G$ is affine.
\end{enumerate}
\end{prop}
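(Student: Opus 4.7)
The plan is to derive all three assertions from Luna's étale slice theorem together with standard descent of line bundles along principal $G$-bundles. The key preliminary observation is that, under the hypotheses, the $G$-action on $X^\theta$ is free and, because $X^{\underline{\theta}} = X^\theta$, every semistable orbit is already closed in $X^\theta$; by Proposition \ref{prop:GIT-Quot} the GIT equivalence $\sim_G$ therefore reduces to ordinary orbit equivalence, giving a set-theoretic identification $X/\!/_\theta \, G = X^\theta/G$ which we will upgrade to a scheme isomorphism in (ii).

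For (i), I would apply Luna's slice theorem at any $x \in X^\theta$. Since $G_x$ is trivial and $X^\theta$ is smooth, it produces a $G$-saturated Zariski-open neighborhood $U$ of $G.x$ and a $G$-equivariant étale morphism $U \to G \times S$ for some étale slice $S$. This locally models $U \to U/G$ on the second projection $G \times S \to S$, thereby presenting $X^\theta \to X^\theta/G$ as étale-locally trivial with fiber $G$ — that is, as a principal $G$-bundle in the étale topology. Gluing over a $G$-saturated open cover of $X^\theta$ yields (i), and simultaneously endows $X^\theta/G$ with a natural scheme structure whose local charts are the slices $S$. Assertion (ii) then follows: smoothness of $X^\theta$ descends to $X^\theta/G$ along the faithfully flat quotient of relative dimension $\dim G$, and the set-theoretic bijection $X^\theta/G \cong X/\!/_\theta \, G$ is promoted to a scheme isomorphism by the universal property of the categorical quotient.

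For (iii), the principal bundle produced in (i) makes the associated bundle construction $\mathcal O_{X/\!/_\theta \, G}(1) := X^\theta \times_G \mathscr L_\theta|_{X^\theta}$ well-defined, and faithfully flat descent identifies its pushforward to the affine quotient with the sheafification of the graded piece $A^\theta_n$:
\beu (\pi_\theta)_* \mathcal O_{X/\!/_\theta \, G}(n) \cong \widetilde{A^\theta_n} \, . \eeu
In other words, $\mathcal O_{X/\!/_\theta \, G}(1)$ is nothing but the Serre twisting sheaf attached to $\mathrm{Proj}\, A^\theta_\bullet$, and therefore is $\pi_\theta$-ample by the very construction of $\mathrm{Proj}$. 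Since $X/\!/G = \mathrm{Spec}\, A^G$ is affine, the standard fact that relative ampleness over an affine base upgrades to ordinary ampleness then gives the absolute ampleness of $\mathcal O_{X/\!/_\theta \, G}(1)$ on $X/\!/_\theta \, G$.

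The step I expect to be the most delicate is the identification in (iii): verifying that the line bundle obtained from the associated bundle construction genuinely coincides, in $\mathrm{Pic}(X/\!/_\theta \, G)$, with the tautological $\mathcal O(1)$ of $\mathrm{Proj}\, A^\theta_\bullet$, rather than with some character twist. This amounts to a careful matching of the $G$-action on $\mathscr L_\theta$ with the grading on $A^\theta_\bullet$ via descent, and is where conventions (left versus right $G$-actions, sign of the character $\theta$) must be treated consistently. The remaining steps are direct consequences of Luna's theorem and routine principal bundle descent.
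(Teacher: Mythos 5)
Your proposal follows precisely the route the paper takes: the paper gives no written proof at all, simply asserting that the proposition ``is a direct consequence of Luna's \'etale slice theorem,'' and your argument fleshes out exactly what that citation is meant to cover (Luna for (i) and (ii), descent plus the identification with the Serre twist for (iii)). One small presentational correction: Luna's theorem furnishes a $G$-equivariant strongly \'etale morphism $G \times_{G_x} S \to X$ \emph{onto} a saturated open $U$, not a morphism $U \to G\times S$; since $G_x$ is trivial this still exhibits $U\to U/G$ as \'etale-locally the projection $G\times S\to S$, so the conclusion is unaffected.
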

\begin{rem}
In general, the principal $G$-bundle structure mentioned in the above proposition involves local triviality in the \'etale topology. However, when $G$ is the general linear group $GL(n, \C)$ as will be the case here -- or more generally when $G$ is \emph{special} by definition of that term, \cite{Serre} -- every principal $G$-bundle is indeed locally trivial in the Zariski topology.
\end{rem}

\subsection{Definitions}
\label{quivers} We now turn to the definition of quiver varieties, following \cite{Nakajima}. Remember that $I:=\range{1}{\rank(\g)}$. Let $(I, E)$ be the finite graph whose vertices, labelled by $I$, are related by $2 \delta_{ij}- c_{ij}$ edges in $E$ for every pair of vertices $i, j \in I$. In the cases of interest in this paper, these graphs are nothing but Dynkin diagrams of type $\mathfrak a$, $\mathfrak d$, or $\mathfrak e$. Let $H$ denote the set of pairs consisting of an edge together with its orientation. For each $h \in H$, let $\inn(h)$ (resp. $\out(h)$) denote the ingoing (resp. outgoing) vertex of $h$ and $\bar h$ denote the same edge as $h$ with opposite orientation. Let $\varepsilon : H \rightarrow \mathbb C^*$ be such that $\varepsilon (h ) + \varepsilon (\bar h) = 0$ for all $h \in H$. With every pair $(V, W)$ of $I$-graded finite dimensional $\mathbb C$-vector spaces, we associate the following $\mathbb C$-vector spaces
\be L(V, W) := \bigoplus_{k \in I} \Hom (V_k, W_k)\, ,\label{eq:L}\ee
\be E(V, W) := \bigoplus_{h \in H} \Hom (V_{\out(h)}, W_{\inn(h)}) \, ,\label{eq:E}\ee
\be M(V, W) := E(V, V) \oplus L(W, V) \oplus L(V, W) \, .\label{eq:M}\ee
Elements of $M(V, W)$ will be conveniently written as triples $(B, i, j)$ with $B=(B_h)_{h \in H} \in E(V, V)$, $i=(i_k)_{k \in I} \in L(W, V)$ and $j =  ( j_k)_{k \in I} \in L(V, W)$. The above defined vector space $M(V, W)$ admits a group action of $G(V) := \prod_{k \in I} GL(V_k)$ defined by
\be (B,i,j) \mapsto g \cdot (B,i,j) := (g\cdot B, g\cdot i, g \cdot j) \, , \label{eq:Gv}\ee
for all $(B,i,j) \in M(V, W)$ and all $g =(g_k)_{k \in I}\in G(V)$, where 
\be g \cdot B := (g_{\inn(h)} \circ B_h \circ g_{\out(h)}^{-1})_{h \in H} \qquad g \cdot i := (g_k \circ i_k)_{k \in I} \qquad g\cdot j:= (j_k \circ g^{-1}_k)_{k \in I}\, .\ee
We shall denote by $[B,i,j]$ the $G(V)$-orbit of $(B,i,j) \in M(V, W)$. Define furthermore the \emph{momentum map}
\bea \mu : M(V, W) &\rightarrow& L(V,V) \nn\\ 
(B,i,j) &\mapsto& \mu(B,i,j) := \left (\sum_{\substack{h \in H\\ \inn(h) = k}} \varepsilon(h) B_h \circ B_{\bar h} + i_k \circ j_k \right )_{k \in I} \, . \label{eq:defmu}
\eea
Let $\mu^{-1}(0) \subseteq M(V, W)$ be the affine algebraic variety defined as the zero set of $\mu$ and denote by $A(\mu^{-1}(0))$ its coordinate ring.

Define a character $\chi : G(V) \rightarrow \mathbb C^*$ by setting $\chi(g) = \prod_{k \in I} \det g_k^{-1}$ for all $g=(g_k)_{k \in I} \in G(V)$. The considerations of the previous section allow us to make the following
\begin{defn}
For every pair $(V, W)$ of finite dimensional $I$-graded $\mathbb C$-vector spaces, let
\be \M_0 (V, W):= \mu^{-1}(0) /\!/ G(V) = \mathrm{Spec} \, A(\mu^{-1}(0))^{G(V)}\, ,\ee
and
\be \M (V, W):= \mu^{-1}(0)/\!/_\chi \, G(V) = \mathrm{Proj} \, A_\bullet^\chi \, .\ee
From now on, $\M(V, W)$ -- or, to be more precise, its subspace of closed points --  will be referred to as a \emph{quiver variety}.
\begin{rem}
In order to make the relation with representation theory more transparent, it is worth noting that one could equivalently parametrize quiver varieties and all related objects in terms of the root and weight associated with the respective dimension vectors $\dim V = (\dim V_k)_{k \in I}$ and $\dim W = (\dim W_k)_{k \in I}$ by setting 
\be \alpha(V): = \sum_{k\in I} \dim(V_k) \, \alpha_k \in Q^+ \qquad \qquad \lambda(W):= \sum_{k \in I} \dim (W_k) \, \omega_k \in P^+\, .\ee
\end{rem}
\end{defn}

As in the previous section, $\M(V, W)$ is quasi-projective. Indeed, we have
\begin{prop}
\label{Prop:projmorph}
For every pair $(V, W)$ of $I$-graded finite dimensional $\mathbb C$-vector spaces, there exists a projective morphism $\pi : \M(V, W) \rightarrow \M_0(V, W)$.
\end{prop}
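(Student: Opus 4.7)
The plan is to observe that this proposition is a direct specialization of the general GIT construction reviewed in Section \ref{GIT}. Concretely, I would set $X = \mu^{-1}(0)$, $G = G(V)$, and take the character $\theta = \chi$ introduced just above the statement of the definition of $\M(V,W)$. Three conditions then have to be checked before the general machinery applies.

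First, one must verify that $\mu^{-1}(0)$ is an affine algebraic variety on which $G(V)$ acts; this is immediate, since $\mu^{-1}(0)$ is cut out by the polynomial equations $\mu(B,i,j)=0$ inside the affine space $M(V,W)$, and the momentum map $\mu$ defined in (\ref{eq:defmu}) is $G(V)$-equivariant with respect to the action (\ref{eq:Gv}), so $\mu^{-1}(0)$ is a closed $G(V)$-stable subvariety. Second, $G(V)=\prod_{k\in I}GL(V_k)$ is reductive as a product of general linear groups, which is the hypothesis needed in Section \ref{GIT} to guarantee finite generation of the graded algebra $A^\chi_\bullet = \bigoplus_{n\in\N} H^0(\mu^{-1}(0),\mathscr L_\chi^{\otimes n})^{G(V)}$. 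Third, $\chi:G(V)\to\C^*$, $g\mapsto \prod_k \det g_k^{-1}$, is manifestly a character, so one has a $G(V)$-linearized line bundle $\mathscr L_\chi$ on $\mu^{-1}(0)$.

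With these three points in place, the general discussion of Section \ref{GIT} produces the graded algebra $A^\chi_\bullet$, realizes $\M(V,W) = \mathrm{Proj}\,A^\chi_\bullet$ as a quasi-projective variety, and gives a canonical projective morphism induced by the inclusion of the degree-zero invariants
\beu
A(\mu^{-1}(0))^{G(V)} = A^\chi_0 \hookrightarrow A^\chi_\bullet\, ,
\eeu
namely $\pi_\chi : \mu^{-1}(0)/\!/_\chi G(V) \to \mu^{-1}(0)/\!/G(V)$. Unwinding the definitions of $\M_0(V,W)$ and $\M(V,W)$, this is exactly the desired projective morphism $\pi : \M(V,W)\to \M_0(V,W)$.

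There is essentially no hard step here: the entire content of the proposition is that the abstract GIT picture from Section \ref{GIT} applies verbatim to the quiver setting, and the only work is to match the data. If anything, the mildly subtle point is the reductivity of $G(V)$ and the resulting finite generation of $A^\chi_\bullet$, but these are standard facts for products of general linear groups acting on affine varieties, so no additional argument is required.
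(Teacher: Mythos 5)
Your proposal is correct and follows essentially the same route as the paper, which indeed offers no separate argument for this proposition beyond the phrase ``As in the previous section'' and reliance on the general GIT discussion of Section \ref{GIT}, where the inclusion $A^G = A^\theta_0 \hookrightarrow A^\theta_\bullet$ is already observed to induce the projective morphism $\pi_\theta$. Your added verifications (that $\mu^{-1}(0)$ is an affine $G(V)$-stable subvariety because $\mu$ is $G(V)$-equivariant for the adjoint action on $L(V,V)$, that $G(V)$ is reductive, and that $\chi$ is a character) simply make explicit the data-matching that the paper leaves implicit.
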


For every $x \in \M_0(V, W)$, define
\be \LL(V,W)_x:= \pi^{-1}(x)\, ,\ee
the fibre of $\pi$ at $x$. We shall denote by $\LL(V, W)$, instead of $\LL(V,  W)_0$, the fibre at $0 \in \M_0(V, W)$. By construction, $\LL(V, W)$ is a projective subvariety of $\M(V, W)$. It is known that $ \dim \M(V, W) = 2 \dim \LL(V, W)$ and that, indeed, $\LL(V,W)$ is a Lagrangian subvariety of $\M(V, W)$, when the latter is endowed with its natural symplectic structure, \cite{Nakajima}.

Turning our attention to $\M_0(V, W)$, we define $\M_0^{\mathrm{reg}}(V, W)$ as the (possibly empty) set of points $[B,i,j] \in \M_0(V, W)$ such that $(B,i,j) \in \mu^{-1}(0)$ has the trivial stabilizer in $G(V)$. In \cite{NakKM}, it is proven that every $[B,i,j] \in \M_0^{\mathrm{reg}}(V, W)$ is stable, that $\pi$ induces an isomorphism $\pi^{-1}(\M_0^{\mathrm{reg}}(V, W)) \cong \M_0^{\mathrm{reg}}(V, W)$ and, furthermore, that
\begin{prop}
\label{prop:regular}
For a graph $(I,E)$ of type $\mathfrak a$, $\mathfrak d$, or $\mathfrak e$, and every pair of $I$-graded vector spaces $(V, W)$
\begin{itemize}
\item[-] we have $\M_0(V, W) = \bigcup_{[V']} \M_0^{\mathrm{reg}}(V', W)$, where the union runs over equivalence classes of $I$-graded vector spaces $[V']$ such that $\alpha(V) - \alpha(V') \in Q^+$;
\item[-] and if $\M_0^{\mathrm{reg}}(V, W) \neq \emptyset$ then $\lambda(W) - \alpha(V)$ is dominant.
\end{itemize}
\end{prop}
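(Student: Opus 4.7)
Both assertions are due to Nakajima \cite{NakKM}; I sketch the plan.

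For \textbf{(i)}, the plan is to invoke the GIT description of Proposition \ref{prop:GIT-Quot}: every point of $\M_0(V, W) = \mu^{-1}(0)/\!/ G(V)$ is represented by a unique closed $G(V)$-orbit in $\mu^{-1}(0)$. Pick a closed-orbit representative $(B, i, j)$ and let $\Gamma \subseteq G(V)$ be its stabilizer, which is reductive. Decompose $V$ into $\Gamma$-isotypic components and let $V' \subseteq V$ be the trivial isotypic piece. Closedness of the orbit forces the framing maps $i, j$ to factor through $V'$ and $B$ to split accordingly, with the non-trivial isotypic part collapsing to $0$ in $\M_0$. The restriction $(B|_{V'}, i|_{V'}, j|_{V'}) \in \mu^{-1}(0) \cap M(V', W)$ then has trivial stabilizer in $G(V')$ and represents the same point of $\M_0$, hence lies in $\M_0^{\mathrm{reg}}(V', W)$. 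Since $V'$ is $I$-graded, $\alpha(V) - \alpha(V') = \sum_k (\dim V_k - \dim V_k')\,\alpha_k \in Q^+$.

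For \textbf{(ii)}, the plan is to localize the stability condition vertex by vertex. Fix $k \in I$ and, for $(B, i, j) \in \mu^{-1}(0)$ representing a point in $\M_0^{\mathrm{reg}}(V, W)$, introduce the two-step complex
\[
V_k \xrightarrow{\phi_k} W_k \oplus \bigoplus_{h \in H,\, \inn(h) = k} V_{\out(h)} \xrightarrow{\psi_k} V_k,
\]
with $\phi_k = (j_k, (B_{\bar h})_{\inn(h)=k})$ and $\psi_k = i_k + \sum_{h: \inn(h)=k} \varepsilon(h) B_h$. The $k$-th component of the equation $\mu(B, i, j) = 0$ guarantees $\psi_k \circ \phi_k = 0$. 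A standard argument then shows that triviality of the global $G(V)$-stabilizer forces $\phi_k$ to be injective: a non-zero element of $\ker \phi_k$ would span a $B$-stable line in $V_k$ annihilated by $j$, whose pointwise scaling would extend to a non-trivial one-parameter subgroup of the stabilizer. A dual argument handles surjectivity of $\psi_k$. Non-negativity of the dimension of the middle cohomology then reads
\[
0 \leq \dim W_k + \sum_{h: \inn(h)=k} \dim V_{\out(h)} - 2 \dim V_k = \langle \lambda(W) - \alpha(V), \alpha_k \rangle,
\]
where the final equality uses $c_{kl} = 2\delta_{kl} - a_{kl}$ in the simply laced case. Since this holds for every $k \in I$, $\lambda(W) - \alpha(V) \in P^+$.

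The main obstacle will be the vertex-wise step of (ii): the \emph{global} Euler characteristic of the tangent complex at $(B, i, j)$ only yields $\langle 2\lambda(W) - \alpha(V), \alpha(V) \rangle \geq 0$, which is strictly weaker than dominance. Converting triviality of the global $G(V)$-stabilizer into the \emph{per-vertex} injectivity and surjectivity of $\phi_k$ and $\psi_k$ is where the ADE hypothesis on $(I, E)$ genuinely enters, through the simply laced Cartan data and the absence of oriented cycles. Part (i), by contrast, uses only reductivity of the stabilizer and Luna's slice theorem, so is essentially formal.
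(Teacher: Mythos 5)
The paper does not prove Proposition \ref{prop:regular}; it is cited verbatim from \cite{NakKM}. Your sketch captures the right global shape, but it has two concrete gaps and, more importantly, misattributes where the finite-type hypothesis actually does its work.

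First, part~(i) is \emph{not} ``essentially formal.'' The isotypic decomposition $V = V' \oplus V''$ with $V'$ the trivial $\Gamma$-isotypic piece is indeed formal (this is Luna), as is the observation that $i$ lands in $V'$, $j$ kills $V''$, and $B$ respects the splitting. But the step you describe as ``the non-trivial isotypic part collapsing to $0$ in $\M_0$'' is precisely the non-formal content: it is the assertion that the closed $G(V'')$-orbit of $(B'',0,0)$ in $\mu^{-1}(0) \cap M(V'',0)$ is the origin, i.e.\ that $\M_0(V'',0) = \{0\}$ for every $I$-graded $V''$. That statement characterizes finite (ADE) type: for the Jordan quiver or an affine Dynkin graph, $\M_0(V'',0)$ is a symmetric power of an ALE space and is far from a point, so the equality $\M_0(V,W) = \bigcup_{[V']} \M_0^{\mathrm{reg}}(V',W)$ simply fails. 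So the ADE hypothesis enters in~(i), not merely in~(ii), and your concluding paragraph has it backwards.

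Second, in part~(ii) the one-parameter subgroup you propose need not lie in the stabilizer. Scaling on a line $\C v \subset V_k$ (identity on a complement $V_k'$ and on all $V_l$, $l\neq k$) fixes $i$, $j$, and $B_h$ for $\out(h)=k$ because $B_{\bar h}v = 0$; but for $h$ with $\inn(h) = k$ you would need $\mathrm{im}(B_h) \subseteq V_k'$, which is not implied by $v \in \ker\phi_k$. The correct route to injectivity of $\phi_k$ is the stability criterion of Proposition~\ref{prop:stab} together with the fact, recorded in the paper just before this proposition, that points of $\M_0^{\mathrm{reg}}(V,W)$ are stable: the $I$-graded subspace $S$ with $S_k = \C v$ and $S_l = 0$ for $l \neq k$ is then a non-trivial $B$-invariant subspace of $\ker j$, a contradiction. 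Surjectivity of $\psi_k$ is genuinely a different statement, the ``co-stability'' assertion that no proper $B$-invariant $I$-graded subspace of $V$ contains $\mathrm{im}(i)$; waving at ``a dual argument'' leaves this unjustified, and it is not a formal dual of Proposition~\ref{prop:stab} (which is one-sided). One must argue separately that a closed-orbit representative with trivial stabilizer is co-stable as well as stable, e.g.\ via the transpose symmetry $(B,i,j)\mapsto (B^t, j^t, i^t)$ or via semisimplicity of the closed-orbit representation. Once both per-vertex conditions are in hand, your Euler-characteristic computation of the middle cohomology does yield $\langle \lambda(W)-\alpha(V), \alpha_k\rangle \geq 0$ for each $k$, and that part of the argument is type-independent.
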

As a consequence, the union over equivalence classes of $I$-graded vector spaces $\bigcup_{[V]} \M_0(V, W)$ stabilizes at some $[V]$ and we let
\be \M_0(\infty, W) := \bigcup_{[V]} \M_0(V, W)\, . \label{eq:M0infty}\ee

\subsection{Stability}
The set $\mu^{-1}(0)^\chi$ of $\chi$-semistable points admits a useful and more intrinsic characterization through the following
\begin{prop}
\label{prop:stab}
A point $(B, i, j) \in \mu^{-1}(0)$ is $\chi$-semistable iff there exists no non-trivial $B$-invariant $I$-graded subspace of $\ker j$.
\end{prop}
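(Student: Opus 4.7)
The plan is to apply the Hilbert--Mumford numerical criterion, in the form established by King~\cite{King}: a point $(B,i,j) \in \mu^{-1}(0)$ is $\chi$-semistable if and only if $\langle \chi, \lambda \rangle \geq 0$ for every one-parameter subgroup $\lambda : \C^* \to G(V)$ such that $\lim_{t \to 0} \lambda(t) \cdot (B,i,j)$ exists in $\mu^{-1}(0)$, where $\langle \chi, \lambda\rangle \in \Z$ is defined by $\chi(\lambda(t)) = t^{\langle \chi, \lambda \rangle}$. Both implications will then reduce to producing, or analyzing, an appropriate one-parameter subgroup.

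For the direct implication, I argue the contrapositive. Suppose $S \subseteq \ker j$ is a non-trivial $B$-invariant $I$-graded subspace. Choose an $I$-graded complement $T$ so that $V = S \oplus T$, and let $\lambda \in \Hom(\C^*, G(V))$ act by $t \cdot \id$ on $S$ and by $\id$ on $T$. The $B$-invariance of $S$ kills the would-be $S \to T$ component of every $B_h$, while the inclusion $S \subseteq \ker j$ kills $j|_S$; with these vanishings, a direct block-by-block inspection of $(B,i,j)$ shows that every surviving component has non-negative $\lambda$-weight, so $\lim_{t\to 0} \lambda(t)\cdot(B,i,j)$ exists and lies inside the closed $G(V)$-invariant subvariety $\mu^{-1}(0)$. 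Since $\chi(\lambda(t)) = \prod_{k\in I} t^{-\dim S_k} = t^{-\dim S}$ and $\dim S > 0$, the numerical criterion is violated.

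For the converse, assume that $(B,i,j)$ is not $\chi$-semistable and pick a $\lambda$ with $\lim_{t\to 0} \lambda(t)\cdot(B,i,j)$ existing in $\mu^{-1}(0)$ and $\langle \chi, \lambda \rangle < 0$. I decompose each $V_k$ into $\lambda$-weight spaces $V_k = \bigoplus_{n \in \Z} V_k(n)$, set $V(n) := \bigoplus_{k \in I} V_k(n)$, and consider the decreasing $I$-graded filtration $V_{\geq n} := \bigoplus_{m \geq n} V(m)$. Translating the weight constraints that the existence of a $t\to 0$ limit imposes on each of the three constituents yields: (i) $B$ preserves the filtration $V_{\geq\bullet}$, (ii) $i(W) \subseteq V_{\geq 0}$, and (iii) $V_{>0} \subseteq \ker j$. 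Finally, the identity $\langle \chi, \lambda \rangle = -\sum_n n\dim V(n) < 0$ forces some $V(n)$ with $n > 0$ to be non-zero, so that $V_{>0}$ is a non-trivial $I$-graded $B$-invariant subspace contained in $\ker j$, as required.

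The only non-routine bookkeeping occurs in the second step, where one must translate the existence of a $t\to 0$ limit in $M(V, W)$ into the three weight-filtration conditions on $B$, $i$, and $j$; this is an elementary block-matrix computation and I do not anticipate any genuine obstacle beyond it.
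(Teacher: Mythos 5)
Your proof is correct and follows the same route that the paper itself invokes: it cites Nakajima's \cite{NakKM}, which establishes exactly this characterization via the Hilbert--Mumford numerical criterion in the form given by King. Both directions of your argument (the explicit one-parameter subgroup built from a destabilizing subspace $S\subseteq\ker j$, and the extraction of the subspace $V_{>0}$ from the weight filtration of a destabilizing $\lambda$) match the standard argument, and the sign bookkeeping with $\chi(g)=\prod_k\det g_k^{-1}$ is handled correctly.
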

This is proved in \cite{NakKM}, using Hilbert's criterion. Stated in the above form, $\chi$-semistability easily implies that the stabilizer $G(V)_{(B,i,j)}$ of any triple $(B, i,j) \in \mu^{-1}(0)^\chi$ is trivial. It follows that all $G(V)$-orbits are closed in $\mu^{-1}(0)^\chi$ and that, indeed, $\mu^{-1}(0)^{\underline{\chi}} = \mu^{-1}(0)^\chi$. Thus, $\chi$-semistability and $\chi$-stability agree and are merely referred to as \emph{stability} in the context of quiver varieties. Using proposition \ref{prop:stab}, one also establishes that the differential $d\mu:M(V, W) \to L(V, V)$ is surjective at every triple $(B,i,j) \in \mu^{-1}(0)^\chi$ and hence that $\mu^{-1}(0)^\chi$ is non-singular \cite{NakKM}. We are thus under the premises of proposition \ref{prop:canampleline} and its conclusions follow, namely \cite{Nakajima}
\begin{prop}
\label{prop:principalbundle}
For every pair $(V, W)$ of $I$-graded finite dimensional $\mathbb C$-vector spaces, the following hold:
\begin{enumerate}
\label{prop:Mquot}
\item[i.] $\mu^{-1}(0)^\chi$ has the structure of a principal $G(V)$-bundle over $\mu^{-1}(0)^\chi/G(V)$;
\item[ii.] $\M (V, W)$ is isomorphic to the $\langle \alpha(V), 2\lambda(W) - \alpha(V)\rangle$-dimensional non-singular quasi-projective variety $\mu^{-1}(0)^\chi/G(V)$;
\item[iii.] the line bundle $\mathscr L_\chi$ descends to an ample line bundle $\mathcal O_{\M(V, W)}(1) := \mu^{-1}(0)^\chi \times_{G(V)} (\mathscr L_\chi|_{\mu^{-1}(0)^\chi})$ on $\M(V, W)$. 
\end{enumerate}
\end{prop}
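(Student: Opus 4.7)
The proposition is set up to follow almost immediately from Proposition \ref{prop:canampleline}, applied to the data $X = \mu^{-1}(0)$, $G = G(V)$, and $\theta = \chi$. Accordingly, the plan is to verify the three hypotheses of that proposition in this setting, and then to handle the dimension count in (ii) separately.

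First, I would verify $\mu^{-1}(0)^{\underline{\chi}} = \mu^{-1}(0)^\chi$ (when non-empty). As already indicated in the text preceding the statement, Proposition \ref{prop:stab} implies that the stabilizer $G(V)_{(B,i,j)}$ of any $\chi$-semistable triple in $\mu^{-1}(0)$ is trivial: any element $g \in G(V)_{(B,i,j)}$ acts trivially on $\mathrm{Im}\, i$, commutes with $B$, and kills $j$, so the fixed $I$-graded subspace $\ker(g - \id)$ is $B$-invariant and contained in $\ker j$; by semistability it must be all of $V$, forcing $g = \id$. Since stabilizers are trivial, each $G(V)$-orbit in $\mu^{-1}(0)^\chi$ has maximal dimension, hence is open in its closure and therefore closed in $\mu^{-1}(0)^\chi$, which gives semistability $\Leftrightarrow$ stability.

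Next I would check that $\mu^{-1}(0)^\chi$ is non-singular. For this the key point is that the differential $d\mu_{(B,i,j)} : M(V,W) \to L(V,V)$ is surjective at every $\chi$-semistable triple: by a standard computation, $(\mathrm{Im}\, d\mu_{(B,i,j)})^\perp$ (with respect to the trace pairing on $L(V,V)$) is precisely $\mathrm{Lie}(G(V)_{(B,i,j)})$, which vanishes by the stabilizer argument above. Hence $\mu$ is a submersion along $\mu^{-1}(0)^\chi$, so $\mu^{-1}(0)^\chi$ is smooth of codimension $\dim L(V,V)$ in $M(V,W)$. Freeness of the $G(V)$-action on $\mu^{-1}(0)^\chi$ is, again, immediate from triviality of stabilizers. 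With the three hypotheses of Proposition \ref{prop:canampleline} checked, assertions (i) and (iii) follow verbatim, as does the isomorphism $\M(V,W) \cong \mu^{-1}(0)^\chi / G(V)$ in (ii).

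It remains to compute the dimension. From the smoothness argument above,
\[
\dim \M(V,W) = \dim M(V,W) - \dim L(V,V) - \dim G(V).
\]
Writing $v_k = \dim V_k$ and $w_k = \dim W_k$, one has $\dim L(V,V) = \dim G(V) = \sum_k v_k^2$, $\dim L(W,V) = \sum_k v_k w_k = \langle \alpha(V), \lambda(W)\rangle$, and, using that the number of oriented edges from $i$ to $j$ in the simply laced graph $(I,E)$ equals $-c_{ij}$ for $i\neq j$ and $0$ for $i=j$,
\[
\dim E(V,V) = -\sum_{i\neq j} c_{ij} v_i v_j = 2\sum_k v_k^2 - \sum_{i,j} c_{ij} v_i v_j = 2\sum_k v_k^2 - \langle \alpha(V),\alpha(V)\rangle .
\]
Combining these with $\dim M(V,W) = \dim E(V,V) + 2\dim L(W,V)$ yields
\[
\dim \M(V,W) = 2\langle \alpha(V),\lambda(W)\rangle - \langle \alpha(V),\alpha(V)\rangle = \langle \alpha(V), 2\lambda(W) - \alpha(V)\rangle,
\]
as required. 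The main non-routine point is really the surjectivity of $d\mu$ on $\mu^{-1}(0)^\chi$, but once Proposition \ref{prop:stab} is granted this reduces to the standard computation $(\mathrm{Im}\, d\mu)^\perp = \mathrm{Lie}(G(V)_{(B,i,j)})$; all remaining steps are bookkeeping.
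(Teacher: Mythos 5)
Your overall strategy matches the paper's: both reduce the statement to Proposition~\ref{prop:canampleline} after verifying its hypotheses via Proposition~\ref{prop:stab} and the surjectivity of $d\mu$, and your dimension count for part (ii) is correct (the paper itself does not spell it out, citing \cite{NakKM} and \cite{Nakajima} instead).

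However, your argument for the triviality of $G(V)_{(B,i,j)}$ is wrong as written. You assert that $\ker(g-\id)$ is contained in $\ker j$ and that ``by semistability it must be all of $V$.'' Neither claim holds: since $g\cdot i = i$, we have $\mathrm{Im}\,i \subseteq \ker(g-\id)$, so $\ker(g-\id)$ is generally \emph{not} inside $\ker j$; and Proposition~\ref{prop:stab} would force a $B$-invariant $I$-graded subspace of $\ker j$ to be \emph{zero}, not all of $V$. The subspace to examine is $\mathrm{Im}(g-\id)$. Since $g\cdot B = B$, $g$ commutes with $B$, so $\mathrm{Im}(g-\id)$ is $B$-invariant; since $g\cdot j = j$ means $j\circ g^{-1}=j$, we get $j\circ (g^{-1}-\id)=0$, hence $\mathrm{Im}(g^{-1}-\id)\subseteq \ker j$; and $\mathrm{Im}(g^{-1}-\id)=\mathrm{Im}(g-\id)$ because $g^{-1}-\id = -g^{-1}(g-\id)$ and $\mathrm{Im}(g-\id)$ is $g$-stable. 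Proposition~\ref{prop:stab} then yields $\mathrm{Im}(g-\id)=0$, i.e.\ $g=\id$. (Your ``$\ker(g-\id)=V$'' conclusion \emph{would} follow from the dual characterization of stability, namely that $V$ is the smallest $B$-invariant $I$-graded subspace containing $\mathrm{Im}\,i$, combined with the facts that $\ker(g-\id)\supseteq \mathrm{Im}\,i$ and is $B$-invariant; but the paper only records the $\ker j$ form, so that route would first need the dual criterion to be established.) Once this localized error is corrected, the remaining steps --- the surjectivity of $d\mu$ via the trace pairing, the appeal to Proposition~\ref{prop:canampleline}, and the dimension bookkeeping --- are sound and in line with the paper's outline.
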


\subsection{$G(W) \times \mathbb C^*$-action}
Let $G(W):=\prod_{k\in I} GL(W_k)$. $M(V, W)$ admits a $G(W) \times \mathbb C^*$-action defined by
\be G(W) \times \mathbb C^* \times M(V, W) \ni (g, t, (B,i,j))  \mapsto (g,t) *(B,i,j):= (t\, B, t\, i\circ g^{-1}, t \, g \circ j) \, .\ee
Since it commutes with the $G(V)$-action, as defined in equation \ref{eq:Gv}, and leaves the equation $\mu(B,i,j)=0$ invariant, this action descends to the quotients $\M(V, W)$ and $\M_0(V, W)$.

With every integer $m \in \mathbb Z$, we associate a one dimensional $\mathbb C^*$-module $q^m$ by setting~\footnote{In \cite{Nakajima}, a more general $\mathbb C^*$-action was proposed so as to make the tautological bundles defined in the next section $G(W)\times \mathbb C^*$-equivariant for quiver varieties of arbitrary type. That $\mathbb C^*$-action nonetheless coincides with the one used here in type $\mathfrak a$, $\mathfrak d$, or $\mathfrak e$, which are the cases of interest in the present paper.}
\be  \mathbb C^* \times q^m \ni (t,z) \mapsto t^m \, z \in q^m\, .\ee
Given any $\mathbb C^*$-module $V$, we shall write $q^m V$ as a shorthand for $q^m \otimes V$.

\subsection{Vector bundles on $\M(V, W)$}
In view of proposition \ref{prop:principalbundle}, we introduce the \emph{tautological} $I$-graded vector bundle
\be \mathcal V:= \mu^{-1}(0)^\chi \times_{G(V)} V \, ,\ee
as the associated vector bundle with fibre $V$ of the principal $G(V)$-bundle $\mu^{-1}(0)^\chi \rightarrow \M(V, W)$. Similarly, let $\mathcal W$ be the \emph{trivial} vector bundle on $\M(V, W)$ with fibre $W$. For all $k \in I$, the $k$-th components of $\mathcal V$ and $\mathcal W$ will be denoted $\mathcal V_k$ and $\mathcal W_k$ respectively. Equations (\ref{eq:L}) and (\ref{eq:E}) then define, fibrewise, vector bundles on $\M(V, W)$ that we shall denote $E(\mathcal V, \mathcal V)$, $L(\mathcal W, \mathcal V)$ and $L(\mathcal V,\mathcal W)$, of which $B$, $i$ and $j$ are now regarded as sections. $\mathcal V$ is then naturally a $\mathbb C^*$-equivariant vector bundle. Letting $G(W)$ act trivially on it, we make it a $G(W)\times \mathbb C^*$-equivariant vector bundle. Similarly, $\mathcal W$ is a $G(W)\times \mathbb C^*$-equivariant vector bundle by taking the natural action of $G(W)$ and the trivial action of $\mathbb C^*$. As a consequence, the vector bundles $E(\mathcal V, \mathcal V)$, $L(\mathcal W, \mathcal V)$ and $L(\mathcal V, \mathcal W)$ are also $G(W)\times \mathbb C^*$-equivariant vector bundles and, correspondingly, $B$, $i$ and $j$ are $G(W)\times \mathbb C^*$-equivariant sections.

For every $m \in \mathbb Z$, let $q^m$ denote the trivial line bundle on $\M(V, W)$ with degree $m$ $\mathbb C^*$-action and consider, for all $k \in I$, the classes
\be \mathcal F_k(V, W) := q^{-1} \mathcal W_k- (1+q^{-2}) \mathcal V_k + q^{-1} \sum_{\substack{h \in H\\ \inn(h)=k}} \mathcal V_{\out (h)}\ee
in the Grothendieck group $K^{G(W) \times \mathbb C^*}(\M(V, W))$ of the abelian category of $G(W) \times \mathbb C^*$-equivariant coherent sheaves of $\mathcal O_{\M(V, W)}$-modules. They are the images in $K^{G(W) \times \mathbb C^*}(\M(V, W))$ of the classes defined, in the Grothendieck group $K^{G(W)\times \C^*}(\M(V, W), \M(V, W))$ of the derived category of $G$-equivariant complexes of algebraic vector bundles over $\M(V, W)$ exact outside $\M(V, W)$, by the complexes \cite{Nakajima}
\be C_k^\bullet(V,W) : q^{-2} \V_k \stackrel{\sigma_k}{\longrightarrow} q^{-1} \W_k \oplus  \bigoplus_{\substack{h \in H\\ \inn(h)=k}} q^{-1}\mathcal V_{\out (h)} \stackrel{\tau_k}{\longrightarrow} \V_i\, ,\ee
where, for all $k \in I$,
\be \sigma_k := \bigoplus_{\substack{h \in H\\ \inn(h) = k}} B_{\bar{h}} \oplus j_k  \qquad \mbox{and} \qquad \tau_k :  \sum_{\substack{h \in H\\ \inn(h) = k}} \varepsilon(h) B_h + i_k\, .\ee

\subsection{The convolution algebra}
\label{Sec:Convolution}
Let $X_1$, $X_2$ and $X_3$ be smooth quasi-projective algebraic varieties and denote by $p_{ab}:X_1 \times X_2 \times X_3 \rightarrow X_a \times X_b$, for all $a< b \in \range{1}{3}$. We consider $G$-stable closed subvarieties $Z_{ab} \subset X_a \times X_b$ such that $p_{13} : p_{12}^{-1}Z_{12}\cap p_{23}^{-1} Z_{23} \rightarrow X_1 \times X_3 $ be proper and we let $Z_{12}\circ Z_{23} := p_{13}(p_{12}^{-1}Z_{12}\cap p_{23}^{-1} Z_{23} )$. We then follow \cite{CG} in making the following
\begin{defn}
The \emph{convolution product} is the map
\bea \star : K^G(Z_{12}) \otimes K^G(Z_{23}) &\rightarrow& K^G(Z_{12} \circ Z_{23}) \nn\\
\mathcal F \otimes \mathcal F' &\mapsto& p_{13*}((p_{12}^* \mathcal F)\otimes (p_{23}^* \mathcal F'))\, .
\eea
\end{defn}

\begin{defn}
For every triple $(V,V', W)$ of $I$-graded $\mathbb C$-vector spaces, we define $\mathfrak Z(V, V', W)$ as the fibered product
\be\mathfrak Z(V, V', W):= \M(V, W) \times_\pi \M(V', W)\, , \ee
regarded as fibered over $\M_0(\infty, W)$ -- see eq. (\ref{eq:M0infty}).
\end{defn}
Let
\be K^{G(W) \times \mathbb C^*}(\mathfrak Z(W)) := \prod_{[V], [V']} K^{G(W) \times \mathbb C^*}(\mathfrak Z(V, V', W)) \, ,\ee
where, the graph $(I, E)$ being of type $\mathfrak a$, $\mathfrak d$, or $\mathfrak e$, the direct product runs over the finite number of isomorphism classes $[V]$ of $I$-graded vector spaces such that $\M(V, W)$ be non-empty.
\begin{prop}
$K^{G(W) \times \mathbb C^*}(\mathfrak Z(W))$, endowed with the above defined convolution product, constitutes a $\mathbb Z[q,q^{-1}]$-algebra.
\end{prop}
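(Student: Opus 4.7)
The plan is to verify the hypotheses of the convolution construction recalled at the start of section~\ref{Sec:Convolution}, apply it with one choice of triple, and then assemble the pieces over the (finite) indexing family.

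First, fix three $I$-graded $\mathbb C$-vector spaces $V$, $V'$, $V''$ and take $X_1 = \M(V,W)$, $X_2 = \M(V',W)$, $X_3 = \M(V'',W)$ together with $Z_{12} = \mathfrak Z(V,V',W) \subseteq X_1 \times X_2$ and $Z_{23} = \mathfrak Z(V',V'',W) \subseteq X_2 \times X_3$. By proposition~\ref{prop:principalbundle} each $X_a$ is a smooth quasi-projective $G(W) \times \mathbb C^*$-variety, so the setup of section~\ref{Sec:Convolution} applies. The intersection $p_{12}^{-1}Z_{12} \cap p_{23}^{-1} Z_{23}$ is visibly the triple fibered product $\M(V,W) \times_\pi \M(V',W) \times_\pi \M(V'',W)$ over $\M_0(\infty,W)$.

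Next I would check the single nontrivial geometric input, namely that the restriction of $p_{13}$ to this triple fibered product is proper. This restriction is simply the projection forgetting the middle factor, and its fibre over $(x_1,x_3)$ with $\pi(x_1)=\pi(x_3)$ identifies with $\pi^{-1}(\pi(x_1)) \subseteq \M(V',W)$, which is projective by proposition~\ref{Prop:projmorph}. Hence $p_{13}$ is proper, so the derived pushforward $p_{13*}$ sends equivariant coherent sheaves to equivariant coherent sheaves, and $\star$ is a well-defined map on $G(W)\times \mathbb C^*$-equivariant K-theory. By construction the image $Z_{12} \circ Z_{23}$ is contained in $\mathfrak Z(V,V'',W)$. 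Letting $[V]$, $[V']$, $[V'']$ vary over the finite family of isomorphism classes for which $\M(V,W)$ is nonempty --- finiteness being ensured by the simply-laced hypothesis together with proposition~\ref{prop:regular} --- I would then collect these maps into a single bilinear map on the direct product $K^{G(W)\times \mathbb C^*}(\mathfrak Z(W))$.

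Associativity is then a formal consequence of base change along the cartesian squares coming from the four-fold fibered product $\M(V,W) \times_\pi \M(V',W) \times_\pi \M(V'',W) \times_\pi \M(V''',W)$, together with the projection formula for $G(W)\times \mathbb C^*$-equivariant derived functors, exactly as in the classical Chriss--Ginzburg setup \cite{CG}. The $\mathbb Z[q,q^{-1}]$-algebra structure finally comes for free: equivariant K-theory is a module over $R(G(W)\times \mathbb C^*)$, and since the characters $q^m$ identify $R(\mathbb C^*)$ with $\mathbb Z[q,q^{-1}]$, the bilinearity of $\star$ over $\mathbb Z[q,q^{-1}]$ follows from the $G(W) \times \mathbb C^*$-equivariance of all the pullbacks, pushforwards and tensor products involved. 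I expect the properness verification to be the main --- and essentially only --- obstacle, but as sketched it reduces immediately to the projectivity of $\pi$ recorded in proposition~\ref{Prop:projmorph}, after which the remainder of the argument is formal.
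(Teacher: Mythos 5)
Your proposal follows the same route as the paper: identify the triple intersection as a fibered product, check properness of $p_{13}$, note that the image lands in $\mathfrak Z(V,V'',W)$, and obtain the $\mathbb Z[q,q^{-1}]$-structure from $R(\C^*)\cong\Z[q,q^{-1}]$. You supply more detail than the paper, which simply asserts properness; that is welcome, but the justification you give for properness is not quite a valid inference. Having projective fibres does not by itself imply properness (consider $\mathbb A^1\setminus\{0\}\hookrightarrow\mathbb A^1$, whose fibres are finite yet which is not proper). The correct version of your observation is that the map forgetting the middle factor, $\M(V,W)\times_\pi\M(V',W)\times_\pi\M(V'',W)\to\M(V,W)\times_\pi\M(V'',W)$, is the base change of the projective (hence proper) morphism $\pi\colon\M(V',W)\to\M_0(\infty,W)$ along $\M(V,W)\times_\pi\M(V'',W)\to\M_0(\infty,W)$, and properness is stable under base change; composing with the closed immersion $\M(V,W)\times_\pi\M(V'',W)\hookrightarrow\M(V,W)\times\M(V'',W)$ then gives properness of the restricted $p_{13}$. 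With that one-line fix, your argument is correct and coincides in substance with the paper's.
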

\begin{proof}
By definition, $\mathfrak Z(V, V', W)$ is a closed subvariety of  $\M(V, W) \times \M(V', W)$. Furthermore, the map $p_{13}: p_{12}^{-1}(\mathfrak Z(V, V', W))\cap p_{23}^{-1}( \mathfrak Z(V', V'', W)) \rightarrow \M(V, W) \times \M(V'', W)$ is proper and its image $\mathfrak Z(V, V', W) \circ \mathfrak Z(V', V'', W) \subseteq \mathfrak Z(V, V'', W)$. Hence, the convolution product
\be \star :  K^{G(W) \times \mathbb C^*}(\mathfrak Z(V, V', W)) \times K^{G(W) \times \mathbb C^*}(\mathfrak Z(V', V'', W)) \rightarrow K^{G(W) \times \mathbb C^*}(\mathfrak Z(V, V'', W))\ee
is well defined. Observe furthermore that $R(G(W) \times \mathbb C^*)$ is an $R(\mathbb C^*)$-algebra and that $R(\mathbb C^*)$ is isomorphic to $\mathbb Z[q,q^{-1}]$ through $q^m \mapsto L(m)$. Hence we have that $K^{G(W) \times \mathbb C^*}(\mathfrak Z(W))$ is a $\mathbb Z[q,q^{-1}]$-algebra. 
\end{proof}

Set $\mathcal F_i(W):=\bigoplus_{[V]}\mathcal F_i(V, W)$ and let $f_i(W)$ denote the diagonal operator acting on $K^{G(W) \times \mathbb C^*}(\M(V, W))$ by the scalar 
\be\rank \,\, \mathcal F_i(V, W) = \langle \alpha_i , \lambda(W) - \alpha(V) \rangle\, ,\ee 
for every $I$-graded $\mathbb C$-vector space $V$. Let furthermore $H^+ \subset H$ be such that $H^+ \cap\bar H^+ = \emptyset$ and $H=H^+ \cup \bar H^+$. For every $i, j \in I$, denote by $n^+_{ij}$ the number of oriented edges in $H^+$ relating $i$ and $j$ and set $n^-_{ij} := 2 \delta_{ij} - c_{ij}-n^+_{ij}$, where, remember, the $c_{ij}$ denote the entries of the Cartan matrix of $\g$. Clearly $n^+_{ij}= n^-_{ij}$. Define
\be \mathcal F_i^-(V, W):= - \V_i + q^{-1} \sum_{j \in I} n_{ij}^-\V_j \qquad \qquad \mathcal F_i^+(V, W):= q^{-1} \W_i - q^{-1}\V_i +q^{-1} \sum_{j \in I} n^+_{ij} \V_j\ee
and let $f_i^\pm(W)$ denote the diagonal operator acting on $K^{G(W) \times \mathbb C^*}(\M(V, W))$ by the scalar $\rank \,\, \mathcal F_i^\pm(V, W)$. Finally, whenever $V \subset V'$ with $\alpha(V') - \alpha(V) = \alpha_i$ for some $i \in I$, define $\mathfrak C_i^+(V', W) \subset \ZZ(V, V', W)$ to be the set of pairs $((B, i, j), (B', i', j'))$ such that $B'|_V=B$, $i'=i$ and $j'|_V= j$; and, whenever on the contrary $\alpha(V) - \alpha(V') = \alpha_i$, let $\mathfrak C_i^-(V', W):= \omega(\mathfrak C_i^+(V', W) \subset \ZZ(V, V', W)) \subset \ZZ(V, V', W)$, where $\omega$ permutes the factors.
\begin{thm}[\cite{Nakajima}]
\label{thm:Nakajima}
There exists a unique homomorphism of $\mathbb C(q)$-algebras
\be\Phi_{W} : \uqlg \rightarrow K^{G(W)\times \mathbb C^*}(\mathfrak{Z}(W))\otimes_{\mathbb Z[q,q^{-1}]} \mathbb C(q)\ee 
such that, for all $i \in I$ and every $m \in \Z$,
\bea 
\kk{\pm}{i}(z) &\mapsto& \delta_* q^{f_i(W)} {\bigwedge}_{-1/z} \left ((q^{-1}-q) \mathcal F_i(W)\right )\, , \\
\x{\pm}{i,m} &\mapsto& \sum_{[V']} \mathcal X_i^\pm(V, V', W)^{\otimes f^\pm_i(W)+m} \star \delta_* (-1)^{f^\pm_i(W)} \det \mathcal F_i^\pm(V', W)^\vee\, , \quad
\eea
where $\delta : \M(V, W) \hookrightarrow \M(V, W) \times \M(V, W)$ denotes the diagonal embedding and
\be \mathcal X_i^\pm(V, V', W):=\pm q^{-1}\left (\mathcal O_{\M(V, W)} \boxtimes \V' - \V \boxtimes \mathcal O_{\M(V', W)}  \right )|_{\mathfrak C_i^\pm(V', W)} \, .\ee
\end{thm}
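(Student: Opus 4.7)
The plan is to verify that the prescribed assignments on the Drinfel'd current generators extend to a well-defined $\C(q)$-algebra homomorphism by checking the defining relations of $\uqlg$ one by one in $K^{G(W)\times\C^*}(\ZZ(W))$. First I would argue that each image lands in the correct piece of the convolution algebra: the class attached to $\kk{\pm}{i}(z)$ is a $\delta_*$ of an exterior power series in the virtual bundle $\mathcal F_i(W)$ and so is supported on the diagonal $\M(V,W)\subset \ZZ(V,V,W)$, while $\x{\pm}{i,m}$ is supported on the Hecke correspondence $\mathfrak C_i^\pm(V',W)\subseteq \ZZ(V,V',W)$ with $\alpha(V')-\alpha(V)=\pm\alpha_i$; by proposition \ref{Prop:projmorph} these correspondences are projective over $\M(V,W)\times\M(V',W)$, so the convolution product is defined. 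Uniqueness is automatic because the listed currents generate $\uqlg$.

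Next I would verify the relations in order of increasing geometric difficulty. Commutativity of the $\kk{(\pm)_1}{i}(z_1)$ and $\kk{(\pm)_2}{j}(z_2)$ is immediate from the diagonal support, since on $\ZZ(V,V,W)$ restricted to the diagonal the convolution reduces to a tensor product in $K^{G(W)\times\C^*}(\M(V,W))$, which is commutative. The mixed relations $\kk{(\pm)_2}{j}(z_1)\x{(\pm)_1}{i}(z_2)$ reduce to computing the pullbacks of $\mathcal F_j(W)$ to the two factors of $\mathfrak C_i^{\pm_1}(V',W)$; the key geometric input is that crossing the Hecke correspondence changes $\V_k$ by exactly a one-dimensional quotient or sub when $k=i$ and leaves it unchanged otherwise, so that $\mathcal F_j(W)$ differs by an explicit line-bundle class on $\mathfrak C_i^{\pm_1}$. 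Expanding $\bigwedge_{-1/z_1}$ and matching denominators then produces the rational function $(z_1-q^{(\pm)_1 c_{ij}}z_2)/(q^{(\pm)_1 c_{ij}}z_1-z_2)$ after clearing. The $\x{\pm}{i}(z_1)\x{\pm}{j}(z_2)$ quadratic relation is handled analogously, now comparing the restrictions of $\mathcal F_i^\pm(V,W)$ and $\mathcal F_j^\pm(V,W)$ on the composite correspondence $\mathfrak C_i^\pm \circ \mathfrak C_j^\pm$.

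The substantive step is the commutator $[\x{+}{i}(z_1),\x{-}{j}(z_2)]$. I would compute the two terms of the convolution $\x{+}{i}(z_1)\star\x{-}{j}(z_2)-\x{-}{j}(z_2)\star\x{+}{i}(z_1)$ on $\ZZ(V,V'',W)$. For $i\ne j$, the two products are supported on disjoint components of the triple intersection and cancel. For $i=j$, one gets a nontrivial excess-intersection contribution concentrated on $\mathfrak C_i^+(V',W)\circ \mathfrak C_i^-(V',W)$, which is set-theoretically the diagonal $\M(V,W)\hookrightarrow\ZZ(V,V,W)$ together with a normal bundle that is expressible in terms of $\mathcal F_i(V',W)$. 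Passing through a Koszul resolution of this excess bundle identifies the residual class with $\delta_*$ of the coefficient of $\delta(z_2/z_1)$ in the expansion of $\kk{+}{i}(z_2)/(q-q^{-1})$, and similarly for $\kk{-}{i}(z_1)$; this is the single most delicate cohomological computation, and I expect it to be the main obstacle. Finally, the Serre relations follow from an alternating-sum identity on the iterated correspondences $(\mathfrak C_i^\pm)^{1-c_{ij}}\circ\mathfrak C_j^\pm$: the symmetrization over $S_{1-c_{ij}}$ produces a Koszul-type cancellation, and the $q$-binomial coefficients $\qbinom{1-c_{ij}}{p}$ arise naturally from $\bigwedge$-classes of the normal bundles between consecutive strata. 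A reduction to the subquiver on the nodes $\{i,j\}$ (where everything can be checked in the known $\uqh{sl}{2}$ and $\uqh{sl}{3}$ cases) makes this identity tractable, since the ambient quiver only affects the relation through the $G(W)$-equivariance of the calculation.
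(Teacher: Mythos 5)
The paper states Theorem~\ref{thm:Nakajima} as a citation only: it is attributed to \cite{Nakajima}, and the remark that immediately follows it makes clear that the specific homomorphism $\Phi_W$ written down is the Varagnolo--Vasserot modification \cite{VV} of Nakajima's original map. The paper offers no proof, so there is nothing internal to compare your argument against. Your sketch does identify the right overall strategy --- verify the Drinfel'd relations directly in the convolution algebra $K^{G(W)\times\C^*}(\ZZ(W))$ --- but at the critical steps several of the specific claims you make are wrong, and without repairing them the sketch does not amount to a proof.

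For the commutator $[\x{+}{i}(z_1),\x{-}{j}(z_2)]$ with $i\neq j$, the two products $\x{+}{i}\star\x{-}{j}$ and $\x{-}{j}\star\x{+}{i}$ are \emph{not} supported on disjoint loci. Both compositions land in $K^{G(W)\times\C^*}(\ZZ(V,V'',W))$ with $\alpha(V'')=\alpha(V)+\alpha_i-\alpha_j$, and the supports of the two pushforwards coincide as subsets; the vanishing of the difference is an equality of $K$-theory classes to be established by a genuine comparison, not a cancellation between disjoint pieces. For $i=j$, the composite $\mathfrak C_i^+\circ\mathfrak C_i^-$ is \emph{not} set-theoretically the diagonal of $\M(V,W)$: the diagonal is one irreducible component, and there are others parametrizing pairs of distinct codimension-one modifications sharing a common super- or sub-object. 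The delicate point, which your sketch skips, is first to show that the off-diagonal contributions of the two products cancel and only then to compute the residual class on the diagonal; your Koszul-resolution idea addresses the second step but not the first.

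Your plan for the Serre relations is also undersupported. There is no subalgebra of $K^{G(W)\times\C^*}(\ZZ(W))$ attached to the subquiver on $\{i,j\}$, so a literal ``reduction to the $\mathfrak{sl}_2$ and $\mathfrak{sl}_3$ cases'' has no content: $\M(V,W)$ for the full quiver is not a product of quiver varieties over subquivers. What makes a rank reduction legitimate in \cite{Nakajima,VV} is the combination of freeness of $K^{G(W)\times\C^*}(\ZZ(W))$ as an $R(G(W)\times\C^*)$-module (coming from a Bia{\l}ynicki-Birula-type decomposition) with Thomason localization: one may check the relations after restriction to torus-fixed loci, where the varieties factorize into products associated with graphs on one or two vertices and an explicit computation becomes feasible. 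This freeness-plus-localization machinery is the essential input missing from your proposal, and the assertion that ``the ambient quiver only affects the relation through $G(W)$-equivariance'' is precisely the statement that would need the localization theorem to justify.
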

\begin{rem}
$\Phi_W$ above is not the homomorphism constructed in \cite{Nakajima}. It is a modified homomorphism due to Varagnolo and Vasserot, \cite{VV}. Note however that both homomorphisms agree on the subalgebra $\mathrm{U}^0_q({\mathrm L}\g)$.
\end{rem}
Letting $K^{G(W)\times \mathbb C^*}(\mathfrak{Z}(W))/\mathrm{torsion}$ be the image of $K^{G(W)\times \mathbb C^*}(\mathfrak{Z}(W)) \to K^{G(W)\times \mathbb C^*}(\mathfrak{Z}(W))\otimes_{\mathbb Z[q,q^{-1}]} \mathbb C(q)$, the following integral restriction of theorem \ref{thm:Nakajima} holds.
\begin{thm}
\label{thm:restNakajima}
The homomorphism $\Phi_{W}$ restricts to a homomorphism $\uqZlg \to K^{G(W)\times \mathbb C^*}(\mathfrak{Z}(W))/\mathrm{torsion}$.
\end{thm}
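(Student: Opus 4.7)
The plan is to verify integrality of $\Phi_W$ on the explicit generators of $\uqZlg$, namely the divided powers $\x{\pm}{i,m,n} = (\x{\pm}{i,m})^n/[n]_q!$ and the coefficients in the $z^{\mp 1}$-expansion of $\exp\bigl(-\sum_{m \in \N^*}\hh_{i,\pm m}/[m]_q\, z^{\mp m}\bigr)$. Since $\Phi_W$ is an algebra homomorphism, it suffices, for each such generator, to exhibit an explicit preimage in $K^{G(W)\times \C^*}(\ZZ(W))$ of its image in the tensor extension over $\C(q)$.

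For the Cartan generators, the key is the standard $\lambda$-ring identity (valid modulo torsion)
\be
\bigwedge_{-u}(\mathcal V) = \exp\!\left(-\sum_{n\geq 1} \frac{\psi^n(\mathcal V)}{n}\, u^n\right),
\ee
where $\psi^n$ denotes the $n$-th Adams operation. Applied to $\mathcal V = (q^{-1}-q)\F_i(W)$, and using $\psi^n(q^{\pm 1})=q^{\pm n}$ together with the multiplicativity of $\psi^n$, one obtains $\psi^n((q^{-1}-q)\F_i(W)) = -(q-q^{-1})[n]_q\,\psi^n(\F_i(W))$. Comparing with $\kk{\pm}{i}(z)/\kk{\pm}{i,0} = \exp\bigl(\pm(q-q^{-1})\sum_{m\geq 1}\hh_{i,\pm m}z^{\mp m}\bigr)$ yields $\Phi_W(\hh_{i,\pm m}) = \pm [m]_q\,\psi^m(\F_i(W))/m$, so that the Cartan generating series of $\uqZlg$ satisfies
\be
\Phi_W\!\left(\exp\!\left(-\sum_{m\geq 1}\frac{\hh_{i,\pm m}}{[m]_q}\, z^{\mp m}\right)\right) = \bigwedge_{\mp 1/z}\!\bigl(\F_i(W)\bigr).
\ee
Since $\F_i(W) = q^{-1}\W_i - (1+q^{-2})\V_i + q^{-1}\sum_{\inn(h)=i}\V_{\out(h)}$, the splitting principle identifies each coefficient in the $z^{\mp m}$ expansion of $\bigwedge_{\mp 1/z}(\F_i(W))$ with an integer combination of tensor products of exterior and symmetric powers of $\W_i$ and of the $\V_j$, each of which is a bona fide element of $K^{G(W)\times \C^*}(\M(V,W))$.

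For the divided powers, one evaluates the $n$-fold convolution $\bigl(\Phi_W(\x{\pm}{i,m})\bigr)^{\star n}$ as a proper pushforward from an iterated Hecke correspondence $\widetilde{\mathfrak C}_{i,n}^\pm(V',W)$ parametrising complete flags $V = V^{(0)} \subset V^{(1)} \subset \cdots \subset V^{(n)} = V'$ with $\alpha(V^{(k)})-\alpha(V^{(k-1)})=\alpha_i$, each inclusion being $B$-invariant and compatible with $i,j$ exactly as in the definition of $\mathfrak C_i^\pm$. The forgetful projection from $\widetilde{\mathfrak C}_{i,n}^\pm$ onto the unordered $n$-step Hecke correspondence has fibres isomorphic to complete flag varieties of type $A_{n-1}$, whose $G(V')$-equivariant K-theoretic class, once weighted by the tautological line bundles arising from $\mathcal X_i^\pm$ and $\det \F_i^\pm$ at each step of the convolution, equals the $q$-factorial $[n]_q!$. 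The resulting explicit integral class in $K^{G(W)\times \C^*}(\ZZ(W))$ then coincides with $[n]_q!\cdot \Phi_W(\x{\pm}{i,m,n})$, so that $\Phi_W(\x{\pm}{i,m,n})$ lies in $K^{G(W)\times\C^*}(\ZZ(W))/\mathrm{torsion}$ as required.

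The main obstacle is precisely this last identification: one must carefully track, at each step of the iterated convolution, the contributions from $\mathcal X_i^\pm(V,V',W)^{\otimes (f^\pm_i(W)+m)}$ and $\det \F_i^\pm(V',W)^\vee$ of Theorem \ref{thm:Nakajima}, and show that after symmetrisation they collectively reproduce the $[n]_q!$ factor coming from the relative flag variety. This is the K-theoretic counterpart of the Ginzburg--Vasserot calculation relating iterated Steinberg convolutions to divided powers of Chevalley generators.
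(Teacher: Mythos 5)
The paper does not actually prove this statement; it is imported verbatim from Nakajima's original construction (cited as \cite{Nakajima}), so there is no ``paper's own proof'' to compare against. Your outline is, however, essentially the line of argument Nakajima uses, so the relevant question is whether your sketch fills in the steps correctly.

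Your treatment of the Cartan generators is sound and essentially complete. The identity $\bigwedge_{-u}\V = \exp\bigl(-\sum_{n\geq 1}\psi^n(\V)u^n/n\bigr)$ extends multiplicatively from genuine bundles to virtual classes, the computation $\psi^n\bigl((q^{-1}-q)\F_i(W)\bigr) = -(q-q^{-1})[n]_q\,\psi^n(\F_i(W))$ is correct, and comparing generating series does identify $\Phi_W$ of the Cartan part of $\uqZlg$ with $\bigwedge_{\mp 1/z}\F_i(W)$, whose coefficients are $\Z$-linear in $\bigwedge^j$ of the positive summand of $\F_i$ tensored with $S^k$ of the negative summand, hence integral. (You should still say a word about matching the two expansion conventions $\bigwedge^\pm_{-1/z}$ that appear in theorem \ref{thm:Nakajima} to the two series $\kk{\pm}i(z)$, but that is bookkeeping.) The divided-power part, by contrast, is where you yourself flag the obstacle, and it is a genuine gap: the entire content of the statement for $\x\pm{i,m,n}$ is the identity equating the class of the iterated Hecke correspondence, weighted by the twists $\mathcal X_i^\pm(\cdot)^{\otimes(f_i^\pm+m)}$ and $\det\F_i^\pm(\cdot)^\vee$ at each step, with $[n]_q!$ times an integral class on the $n$-step correspondence. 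Invoking ``the K-theoretic counterpart of the Ginzburg--Vasserot calculation'' names the right result but does not carry it out; one needs to exhibit the relative flag variety fibration, compute its $q$-weighted pushforward in equivariant K-theory, and check that the line-bundle twists line up so the $q$-factorial is produced on the nose and not merely up to units. As written the proposal is a correct plan whose Cartan half is proved and whose nilpotent half is not.
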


\section{Graded quiver varieties}
\label{sec:gradedquiver}
When studying standard modules in the next section, we shall use fixed-point subvarieties of $\M(V, W)$ under the action of some abelian reductive subgroup of $G(W)\times \mathbb C^*$. In this section, we thus study those fixed-point subvarieties, leading naturally to the notion of \emph{graded} quiver varieties introduced by Nakajima in \cite{NakqtAxiom}.

\subsection{Fixed point subvarieties}
Let $A$ be an abelian reductive subgroup of $G(W)\times \mathbb C^*$ and denote by $\M(V, W)^A$ and $\M_0(V, W)^A$ the respective $A$-fixed-point subvarieties of $\M(V, W)$ and $\M_0(V, W)$. Let $x \in \M(V, W)^A$ and let $(B, i, j) \in \mu^{-1}(0)^\chi$ be a representative of $x$. For every $a \in A$, there exists $\rho(a) \in G(V)$ such that
\be a * (B, i, j) = \rho(a)^{-1} \cdot (B, i, j) \, . \label{eq:rho}\ee
As $G(V)$ acts freely on $\mu^{-1}(0)^\chi$, this $\rho(a)$ is indeed unique and the map $a \mapsto \rho(a)$ thus defines a group homomorphism $A \rightarrow G(V)$.
\begin{defn}
With every $\rho \in \Hom (A, G(V))$, we associate $\M(V, W)[\rho]$, defined as the set of fixed points in $\M(V, W)^A$ which admit representatives $(B,i,j) \in \mu^{-1}(0)^\chi$ such that (\ref{eq:rho}) above holds.
\end{defn}
Note that, actually, $\M(V, W)[\rho]$ only depends on the conjugacy class $[\rho]$ of $\rho$ in $G(V)$, which is also why we adopt this notation. For every $\rho \in \Hom(A, G(V))$, we further define
\be \LL(V, W)[\rho]:=  \M(V, W)[\rho] \cap \LL(V, W)\, .\ee
\begin{prop}
\label{prop:conndecM}
For every pair of $I$-graded $\C$-vector spaces $(V, W)$,
\be \M(V,W)^A = \bigsqcup_{\rho \in \Hom(A, G(V))} \M(V,W)[\rho] \qquad \mbox{and} \qquad \LL(V,W)^A = \bigsqcup_{\rho \in \Hom(A, G(V))} \LL(V,W)[\rho] \ee
constitute decompositions into connected components.
\end{prop}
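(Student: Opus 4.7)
The plan is to handle both decompositions (for $\M(V,W)^A$ and for $\LL(V,W)^A$) by the same argument, splitting each into a disjoint-union statement and a connected-component statement.

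For the disjoint union, the goal is to turn the assignment $x \mapsto \rho$ into a well-defined invariant of $x$. Given $x \in \M(V,W)^A$, pick a representative $(B,i,j) \in \mu^{-1}(0)^\chi$. Since $A$ fixes $x$, for every $a \in A$ there exists $\rho(a) \in G(V)$ satisfying (\ref{eq:rho}), and the freeness of the $G(V)$-action on $\mu^{-1}(0)^\chi$ guaranteed by Proposition \ref{prop:principalbundle} makes $\rho(a)$ unique. Uniqueness combined with the group structure of $A$ then forces $a \mapsto \rho(a)$ to be a group homomorphism $A \to G(V)$, and replacing $(B,i,j)$ by $g\cdot(B,i,j)$ replaces $\rho$ by $g\rho g^{-1}$, so the conjugacy class $[\rho]$ is a genuine invariant of $x$. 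This defines a map $\M(V,W)^A \to \Hom(A,G(V))/G(V)$ whose fibres are by definition the subsets $\M(V,W)[\rho]$, proving the disjoint union; intersecting with $\LL(V,W)$ yields the analogous decomposition of $\LL(V,W)^A$.

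To promote this to a decomposition into connected components, I would first show that each $\M(V,W)[\rho]$ is clopen in $\M(V,W)^A$. Along any smooth family $t \mapsto x_t$ in $\M(V,W)^A$, a local lift through the principal $G(V)$-bundle structure produces a continuous family of representatives $(B_t,i_t,j_t)$, and uniqueness in (\ref{eq:rho}) shows that the associated $\rho_t : A \to G(V)$ depends continuously on $t$. Because $A$ is abelian reductive, the isomorphism class of the $A$-module structure it defines on $V$ is encoded in the integer multiplicities of the irreducible characters of $A$, and these are integer-valued continuous, hence locally constant. Since representations of a reductive abelian group on $V$ with fixed such multiplicities form a single $G(V)$-orbit, $[\rho_t]$ itself is locally constant, so each $\M(V,W)[\rho]$ is open, with closedness following by complementarity; the same applies to $\LL(V,W)[\rho]$ by restriction.

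The remaining point, which I expect to be the main obstacle, is the connectedness of each individual piece $\M(V,W)[\rho]$ (and $\LL(V,W)[\rho]$). I plan to extract this from the identification, to be developed in the remainder of section \ref{sec:gradedquiver}, of $\M(V,W)[\rho]$ with a graded quiver variety $\M^\bullet(V^\bullet,W^\bullet)$ built from the $A$-weight decomposition of $(V,W)$ prescribed by $\rho$. Smoothness and connectedness of graded quiver varieties in simply laced type then transfer to $\M(V,W)[\rho]$, and the corresponding statement for $\LL(V,W)[\rho]$ follows by the analogous identification with the Lagrangian fibre of the associated graded quiver variety, inheriting its connectedness from that setting.
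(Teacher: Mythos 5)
Your proposal is correct and ultimately reduces to the same external ingredients as the paper's one-sentence proof, which simply cites Nakajima: connectedness (or emptiness) of $\M(V,W)[\rho]$ from Theorem 5.5.6 of \cite{Nakajima}, and the homotopy equivalence $\M(V,W)[\rho] \simeq \LL(V,W)[\rho]$ from Proposition 4.1.2 of \cite{Nakajima}. You add genuinely useful material the paper glosses over: the well-definedness of $x \mapsto [\rho]$ via freeness of the $G(V)$-action on $\mu^{-1}(0)^\chi$, and the argument that each $\M(V,W)[\rho]$ is open in $\M(V,W)^A$ -- a point that is logically necessary, since a disjoint union of connected pieces need not be a decomposition into connected components without some openness. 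Your local-constancy argument for $[\rho_t]$ is morally sound, though to make the ``integer multiplicities are locally constant'' step airtight one should note that the eigenvalues of $\rho_t(a)$ are constrained to the discrete set $q^{\mathbb Z}\,\Sp(W)$ by Proposition \ref{VWspec}, which requires $1 \notin \varepsilon^{\mathbb Z}$ -- the hypothesis the paper itself inserts in its proof (and which is satisfied when $\varepsilon$ is transcendental). The one place where you are too terse is the connectedness of $\LL(V,W)[\rho]$: ``inheriting its connectedness from that setting'' elides the key fact, namely that $\M(V,W)[\rho]$ deformation-retracts onto $\LL(V,W)[\rho]$ via a $\C^*$-action (Nakajima's Proposition 4.1.2); connectedness of the ambient variety does not by itself pass to the projective subvariety, and identifying $\LL(V,W)[\rho]$ with the $\pi_\bullet$-fibre of the graded quiver variety merely recasts, rather than resolves, the question. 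With that citation made explicit, the proof is complete and slightly more detailed than the paper's.
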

\begin{proof}
If $(I,E)$ is of type $\mathfrak a$, $\mathfrak d$, or $\mathfrak e$ and $1 \notin \varepsilon^{\mathbb Z}$, $\M(V, W)[\rho]$ is either empty or connected -- see theorem 5.5.6 in \cite{Nakajima} -- and $\M(V, W)[\rho]$ is homotopic to $\LL(V, W)[\rho]$ -- see proposition 4.1.2 in \cite{Nakajima}.
\end{proof}

\subsection{$A$-weight decompositions}
Regarding $V$ as an $A$-module through $\rho \in \Hom (A, G(V))$, we have the following weight decomposition
\be\label{eq:Vdecomp} V= \bigoplus_{\lambda \in \Sp(\rho, V)} V(\lambda) \, ,\ee
where $\lambda \in \Sp(\rho, V) \subset \Hom (A, \mathbb C^*)$ if and only if 
\be V(\lambda) := \{v \in V:\forall a \in A \quad  \rho(a) v = \lambda(a) v \}\ee
is non-zero. Similarly, $W$ can be regarded as an $A$-module through $A \hookrightarrow G(W) \times \mathbb C^* \twoheadrightarrow G(W)$ and we denote by $W(\lambda)$ the corresponding weight spaces for $\lambda \in \Sp(W)$.
We let $q \in \Hom (A, \C^*)$ be the composite $q: A \hookrightarrow G(W) \times \C^* \twoheadrightarrow \C^*$, where the second arrow is projection on the second factor. Then, we have
\begin{prop}
\label{VWspec}
For every $\rho \in \Hom(A, G(V))$, 
\be \label{eq:specaVaW} \Sp(\rho, V) \subseteq q^\Z \Sp (W) \, .\ee
\end{prop}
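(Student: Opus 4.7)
The plan is to unpack the defining equation $a*(B,i,j) = \rho(a)^{-1} \cdot (B,i,j)$ into component equations and read off how the maps $B_h$, $i_k$, $j_k$ interact with the $A$-weight decompositions of $V$ and $W$, then use the stability criterion of proposition \ref{prop:stab} to rule out ``bad'' weights. Write an element of $A$ as a pair $(g_W(a), q(a)) \in G(W) \times \C^*$, where $q: A \to \C^*$ is the composite defined in the statement. Using the explicit formulas for $*$ and $\cdot$ from section \ref{quivers}, the fixed point equation amounts to the three identities
\be  q(a)\rho(a)_{\inn(h)} B_h = B_h \rho(a)_{\out(h)},\qquad i_k \circ g_W(a) = q(a)\rho(a)_k \circ i_k, \qquad j_k \circ \rho(a)_k = q(a) g_W(a) \circ j_k. \ee

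From these identities a direct computation on weight vectors shows that, for every $h \in H$ and $k \in I$, the maps $B_h$, $i_k$ and $j_k$ shift $A$-weights by $q^{-1}$:
\be B_h(V_{\out(h)}(\lambda)) \subseteq V_{\inn(h)}(q^{-1}\lambda), \quad i_k(W_k(\mu)) \subseteq V_k(q^{-1}\mu), \quad j_k(V_k(\lambda)) \subseteq W_k(q^{-1}\lambda). \ee
The first step is to verify these three inclusions carefully — this is the most delicate moment of the argument, because one has to unwind precisely how $A$ sits inside $G(W)\times\C^*$ and act on both sides of each equation with the correct conventions.

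Given these shifts, introduce the $I$-graded subspace
\be V' := \bigoplus_{k \in I}\bigoplus_{\lambda \notin q^{\Z}\Sp(W)} V_k(\lambda) \subseteq V. \ee
By the $B$-weight shift, if $\lambda \notin q^{\Z}\Sp(W)$ then $q^{-1}\lambda \notin q^{\Z}\Sp(W)$ either, so $B_h(V'_{\out(h)}) \subseteq V'_{\inn(h)}$; hence $V'$ is $B$-invariant. By the $j$-weight shift, $j_k(V_k(\lambda)) \subseteq W_k(q^{-1}\lambda)$, and $q^{-1}\lambda \notin \Sp(W)$ since $\lambda \notin q^{\Z}\Sp(W)$; therefore $j_k V'_k = 0$ for all $k$, \ie $V' \subseteq \ker j$.

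Since $(B,i,j)$ is a $\chi$-stable representative of a point in $\M(V,W)[\rho]$, proposition \ref{prop:stab} forces the $B$-invariant $I$-graded subspace $V' \subseteq \ker j$ to be trivial. Therefore every $\lambda$ appearing in the weight decomposition \eqref{eq:Vdecomp} of $V$ lies in $q^{\Z}\Sp(W)$, which is exactly the inclusion \eqref{eq:specaVaW}. Apart from bookkeeping the three shift relations correctly, the argument reduces to a clean application of the stability criterion, so no additional geometric input is needed.
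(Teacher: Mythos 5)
Your proof is correct and follows essentially the same route as the paper: you derive the same three weight-shift inclusions from the fixed-point equation (\ref{eq:rho}), form the same $B$-invariant $I$-graded subspace of $\ker j$ (your $V'$ coincides with the paper's $S$, since $W(q^{\Z}\lambda)=0$ is equivalent to $\lambda\notin q^{\Z}\Sp(W)$), and conclude by proposition \ref{prop:stab}.
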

\begin{proof}
By definition, each point in $\M(V, W)[\rho]$ admits a representative $(B, i, j) \in \mu^{-1}(0)^\chi$ such that (\ref{eq:rho}) holds or, equivalently, such that
\be B_h(V_{\out(h)}(\lambda)) \subseteq V_{\inn(h)}(q^{-1}\lambda) \, , \qquad i_k(W_k(\lambda)) \subseteq V_k(q^{-1} \lambda) \,, \qquad j_k(V_k(\lambda)) \subseteq W_k(q^{-1} \lambda)\,, \label{eq:inclusions}\ee
for all $h \in H$, $k \in I$ and $\lambda \in \Hom(A, \C^*)$. Let $S:=\bigoplus_{k \in I} S_k$ with
\be S_k := \bigoplus_{\substack{\lambda \in \Hom(A, \C^*)\\ W(q^{\Z} \lambda)=0}} V_k(\lambda) \,,\ee
for every $k \in I$. Clearly, $S \subseteq \ker j$ is $B$-invariant and thus $S=0$ by proposition \ref{prop:stab}.
\end{proof}
For every $\rho \in \Hom (A, G(V))$, denote by $i_\rho : \M(V, W)[\rho] \hookrightarrow \M(V, W)$ the inclusion. Clearly, the restrictions $i^*_\rho \V$ and $i^*_\rho\W$ are bundles of $A$-modules through $\rho \times \id_{G(W) \times \C^*} : A \rightarrow G(V) \times G(W) \times \C^*$ and they admit similar weight decompositions
\be \label{TautDecomp} i^*_\rho \V= \bigoplus_{\lambda \in \Sp(\rho, V)} i^*_\rho \V(\lambda) \qquad \qquad i^*_\rho \W =  \bigoplus_{\lambda \in \Sp(W)} i^*_\rho \W(\lambda) \, .\ee
Eventually, we have, for all $k \in I$,
\be\label{FDecomp} i^*_\rho \F_k(V, W) = \bigoplus_{\lambda \in q^\Z\Sp(W)} i^*_\rho \F_k(V, W)(\lambda)\,.\ee

\subsection{Graded quiver varieties}
We now introduce \emph{graded} quiver varieties as a natural parametrization of the $A$-fixed point subvarieties $\M(V, W)[\rho]$. Given $V$, $W$ and $\rho \in \Hom(A, G(V))$ as in the previous section, we now regard $V$ and $W$ as finite dimensional $I \times \Hom(A, \C^*)$-graded vector spaces,
\be \label{VWDecomp} V = \bigoplus_{(k,\lambda) \in I \times \Hom(A, \C^*)} V_k(\lambda)\,, \qquad \qquad W = \bigoplus_{(k,\lambda) \in I \times \Hom(A, \C^*)} W_k(\lambda) \,.\ee
Define the affine algebraic variety $M^\bullet(V, W)$ as the subset of $M(V, W)$ consisting of those triples $(B,i,j)$ such that (\ref{eq:inclusions}) above holds and denote by 
\bea &B_{h, \lambda} : V_{\out(h)}(\lambda) \rightarrow  V_{\inn(h)}(q^{-1} \lambda)& \nn\\
&i_{k,\lambda} : W_k(\lambda) \rightarrow V_k(q^{-1}\lambda) & \nn\\
&j_{k, \lambda} : V_k(\lambda) \rightarrow W_k(q^{-1}\lambda) &
\eea
the respective restrictions of $B_h$, $i_k$ and $j_k$ to the corresponding components of $V$ and $W$ -- see decomposition (\ref{eq:Vdecomp}) and eq. (\ref{eq:inclusions}). Let similarly
\be \mu_{k, \lambda} := \sum_{\substack{h \in H\\ \inn(h)=k}} \varepsilon(h) B_{h, q^{-1}\lambda} \circ B_{\bar{h}, \lambda} + i_{k, q^{-1}\lambda} \circ j_{k , \lambda}\ee
be the $(k, \lambda)$-component of the restriction $\mu_\bullet := \mu|_{M^\bullet(V, W)}$ of the momentum map $\mu$ defined in (\ref{eq:defmu}). Consider the affine algebraic variety $\mu^{-1}_\bullet(0) \subseteq M^\bullet(V, W)$. It is acted upon by $G^\bullet(V) := \prod_{(k,\lambda) \in I \times \Hom(A, \C^*)} GL(V_k(\lambda))$, \ie the maximal subgroup of $G(V)$ preserving the $I \times \Hom(A, \C^*)$-grading of $V$. The character $\chi$ defined in section \ref{quivers} clearly restricts to $G^\bullet(V)$, where it factors according to
\be\label{charfact}\chi(g) = \prod_{(k,\lambda) \in I \times \Hom(A, \C^*)} \det g_{k, \lambda}^{-1}\, .\ee
for all $g \in G^\bullet(V)$. It is worth emphasizing that the products in the definition of $G^\bullet(V)$ above and in (\ref{charfact}) only ever contain a finite number of factors as all but a finite number of $V_k(\lambda)$ are non-zero. Applying again the constructions of section \ref{GIT}, we get the following
\begin{defn}
For every pair $V, W$ of finite dimensional $I \times \Hom(A, \C^*)$-graded $\C$-vector spaces, we let
\be \M_0^\bullet(V, W) := \mu^{-1}_\bullet(0) /\!/ G^\bullet(V) \ee
and
\be \M^\bullet(V, W):= \mu^{-1}_\bullet(0) /\!/_\chi \, G^\bullet(V) \, .\ee
The latter is referred to as a \emph{graded quiver variety} -- see \eg \cite{NakqtAxiom}.
\end{defn}
By the results of section \ref{GIT}, there exists a projective morphism $\pi_\bullet: \M^\bullet(V, W) \to \M^\bullet_0(V, W)$, which coincides with $\pi|_{\M(V, W)[\rho]}$, the restriction to $\M(V, W)[\rho]$ of the projective morphism of proposition \ref{Prop:projmorph}. It follows that if we let
\be \LL^\bullet(V, W)_x:= \pi_\bullet^{-1}(x)\ee
denote its fibre at $x \in \M^\bullet_0(V, W)$, we get $\LL_\bullet(V, W)_x \cong \LL(V, W)[\rho]$. Note that, by construction, $\LL^\bullet(V, W)_x$ is projective. It follows, either by restriction or by use of the results of section \ref{GIT}, that the conclusions of proposition \ref{prop:canampleline} hold for graded quiver varieties. In particular, we have
\begin{prop}
\label{prop:gradquivquot}
The following hold:
\begin{enumerate}
\item[i.] $\mu^{-1}_\bullet(0)^\chi$ is a principal $G^\bullet(V)$-bundle over $\mu^{-1}_\bullet(0)^\chi/G^\bullet(V)$;
\item[ii.] $\M^\bullet(V, W)$ is isomorphic to the non-singular quasi-projective variety $\mu^{-1}_\bullet(0)^\chi/G^\bullet(V)$.
\end{enumerate}
\end{prop}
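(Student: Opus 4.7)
The plan is to verify the three hypotheses of Proposition \ref{prop:canampleline} for the affine algebraic variety $\mu_\bullet^{-1}(0) \subseteq M^\bullet(V,W)$, equipped with the action of the reductive group $G^\bullet(V)$ linearized by the restriction of the character $\chi$ to $G^\bullet(V)$, which by (\ref{charfact}) factors through the determinants of the finer components $V_k(\lambda)$. What must be checked is that $\chi$-semistability coincides with $\chi$-stability, that the semistable locus is non-singular, and that $G^\bullet(V)$ acts freely there. Once these three points are in hand, Proposition \ref{prop:canampleline} yields directly both conclusions (i) and (ii) of Proposition \ref{prop:gradquivquot}.

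First, I would establish the graded analogue of Proposition \ref{prop:stab}: a triple $(B,i,j) \in \mu_\bullet^{-1}(0)$ is $\chi$-semistable if and only if $\ker j$ contains no nontrivial $B$-invariant $I \times \Hom(A,\C^*)$-graded subspace. The proof runs exactly as for Proposition \ref{prop:stab}, applying Hilbert's numerical criterion to one-parameter subgroups of $G^\bullet(V)$, which are precisely the one-parameter subgroups of $G(V)$ respecting the refined $I \times \Hom(A,\C^*)$-grading of $V$. This intrinsic criterion immediately yields that the $G^\bullet(V)$-stabilizer of every semistable triple is trivial and that $G^\bullet(V)$-orbits of semistable triples are closed in $\mu_\bullet^{-1}(0)^\chi$; hence semistability coincides with stability and the action of $G^\bullet(V)$ is free there.

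For non-singularity of $\mu_\bullet^{-1}(0)^\chi$, the plan is to prove surjectivity of the differential $d\mu_\bullet$ at every stable point. Since $\mu_\bullet$ is the restriction of $\mu$ to the $I \times \Hom(A,\C^*)$-graded subspace $M^\bullet(V,W) \subseteq M(V,W)$, its differential splits along the $A$-weight decomposition and surjectivity can be deduced degree by degree, exactly as in the proof of Proposition \ref{prop:principalbundle}: a non-surjective $d\mu_\bullet$ would, by duality, produce a nonzero element of the cokernel whose kernel defines a nontrivial $B$-invariant $I \times \Hom(A,\C^*)$-graded subspace of $\ker j$, contradicting the graded stability criterion of the previous step.

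The main technical point, and the only one requiring real care, is the bookkeeping in the graded Hilbert-criterion argument underlying the stability criterion. An alternative and arguably cleaner route, hinted at in the paragraph preceding the statement, is to identify $\M^\bullet(V,W)$ with the union of connected components $\M(V,W)[\rho]$ of the $A$-fixed point locus in the ambient quiver variety $\M(V,W)$, where $\rho \in \Hom(A,G(V))$ is chosen to realize the given $I \times \Hom(A,\C^*)$-grading (\ref{VWDecomp}). Since $A$ is reductive and $\M(V,W)$ is non-singular by Proposition \ref{prop:principalbundle}, the fixed-point locus $\M(V,W)^A$ is automatically non-singular, and the principal $G(V)$-bundle $\mu^{-1}(0)^\chi \to \M(V,W)$ of Proposition \ref{prop:principalbundle} restricts $A$-equivariantly to a principal $G^\bullet(V)$-bundle over the relevant component, delivering both (i) and (ii) simultaneously.
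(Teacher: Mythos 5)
Your proposal is correct and matches the paper's approach, which the paper merely sketches in one sentence (``It follows, either by restriction or by use of the results of section 3.1\ldots'') together with the remark immediately following the proposition that the stability criterion of Proposition \ref{prop:stab} restricts to $M^\bullet(V,W)$ and is equivalent to the $G^\bullet(V)$-linearized one. You flesh out precisely the two routes the paper gestures at: a direct verification of the hypotheses of Proposition \ref{prop:canampleline} via a graded Hilbert--Mumford criterion, and the restriction of the principal $G(V)$-bundle to the $A$-fixed component, where the fibre over a point of $\M(V,W)[\rho]$ is cut down to a single $G^\bullet(V)$-orbit because $G^\bullet(V) = Z_{G(V)}(\rho(A))$. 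One small point you could make explicit in the ``direct'' route is why the restricted $G(V)$-criterion and the $G^\bullet(V)$-criterion agree: given a nonzero $B$-invariant $I$-graded $S\subseteq\ker j$ for a graded triple $(B,i,j)$, its $I\times\Hom(A,\C^*)$-graded envelope $\hat S := \bigoplus_{k,\lambda}\mathrm{pr}_{k,\lambda}(S)$ is again $B$-invariant, nonzero, and contained in $\ker j$ because $B$ and $j$ shift the $\Hom(A,\C^*)$-degree homogeneously; this is what the paper's remark tacitly uses.
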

\begin{rem}
Proposition \ref{prop:stab} provides a criterion for the stability of any $(B,i,j) \in \mu^{-1}(0)$ which can be extended verbatim to $M(V, W)$ and then restricted to $M^\bullet(V, W)$. It is clear that the stability condition thus obtained is equivalent to the one referred to in the above proposition \ref{prop:gradquivquot}.
\end{rem}
The relevance of graded quiver varieties to the present situation stems from the following
\begin{prop}
For every pair $V, W$ of finite dimensional $I$-graded $\C$-vector spaces and every $\rho \in \Hom(A, G(V))$, we have
\be \M(V, W)[\rho] \cong \M^\bullet(V, W)\, ,\ee
where, on the right hand side, $V$ and $W$ are regarded as $I \times \Hom(A, \C^*)$-graded vector spaces, according to the decompositions (\ref{VWDecomp}).
\end{prop}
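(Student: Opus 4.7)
The plan is to exhibit mutually inverse algebraic morphisms between $\M(V, W)[\rho]$ and $\M^\bullet(V, W)$, built directly from the defining descriptions. The key input is the reformulation already used in the proof of proposition \ref{VWspec}, namely that a triple $(B, i, j) \in \mu^{-1}(0)^\chi$ satisfies the equation (\ref{eq:rho}) if and only if the inclusions (\ref{eq:inclusions}) hold. But this last condition is precisely the requirement that $(B, i, j) \in M^\bullet(V, W)$, once $V$ and $W$ are endowed with the $I \times \Hom(A, \C^*)$-grading (\ref{VWDecomp}) induced by $\rho$ and the $A$-action on $W$. Hence for any graded representative of a point of $\M(V, W)[\rho]$ we have $(B, i, j) \in \mu^{-1}_\bullet(0)$, and the stability remark following proposition \ref{prop:gradquivquot} identifies $\mu^{-1}_\bullet(0)^\chi$ with $\mu^{-1}(0)^\chi \cap M^\bullet(V, W)$, so both sides are carved out of the same set of triples.

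I would then define the forward map $\M(V, W)[\rho] \to \M^\bullet(V, W)$ by sending a $G(V)$-orbit $[B, i, j]$ whose representative satisfies (\ref{eq:rho}) to the $G^\bullet(V)$-orbit of that same triple. For well-definedness, I need to check that any two such graded representatives of a given point of $\M(V, W)[\rho]$ are related by an element of $G^\bullet(V) \subseteq G(V)$. Since $G(V)$ acts freely on $\mu^{-1}(0)^\chi$ by proposition \ref{prop:principalbundle}, any $g \in G(V)$ relating two such representatives must centralize $\rho(A)$. But the weight decomposition (\ref{eq:Vdecomp}) identifies the centralizer of $\rho(A)$ in $G(V)$ with exactly the subgroup $G^\bullet(V)$ preserving the grading, which gives well-definedness.

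The inverse $\M^\bullet(V, W) \to \M(V, W)[\rho]$ is induced by the natural inclusion $\mu^{-1}_\bullet(0)^\chi \hookrightarrow \mu^{-1}(0)^\chi$: any such triple satisfies (\ref{eq:inclusions}) by definition, hence (\ref{eq:rho}) for the canonical $\rho$ read off from the gradings, so its $G(V)$-orbit lies in $\M(V, W)[\rho]$. The two composites are the identity on points, the non-trivial direction once again reducing to the centralizer computation above.

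The remaining, and I expect mildly technical, step is to upgrade these set-theoretic bijections to algebraic isomorphisms of quasi-projective varieties. Both arrows come from equivariant morphisms of the corresponding semistable loci and descend to the GIT quotients using the principal bundle structures furnished by propositions \ref{prop:principalbundle} and \ref{prop:gradquivquot}. Since sources and targets are smooth quasi-projective by those same propositions, and the descended morphisms are mutually inverse on points, a standard argument (e.g. checking via the étale/Zariski local triviality of the principal bundles that each arrow is regular, so that their compositions are the identity as morphisms of smooth varieties) then yields the claimed isomorphism.
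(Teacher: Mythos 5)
Your proof is correct and follows essentially the same route as the paper: the paper simply asserts that $\M(V, W)[\rho] \cong \mu^{-1}_\bullet(0)^\chi/G^\bullet(V)$ is ``obvious from the definitions'' and then invokes proposition \ref{prop:gradquivquot}, whereas you spell out that assertion via the equivalence of (\ref{eq:rho}) with (\ref{eq:inclusions}) and the identification of the centralizer of $\rho(A)$ in $G(V)$ with $G^\bullet(V)$. The centralizer computation is precisely the hidden ingredient that makes the quotient map descend to a well-defined isomorphism, so your elaboration is the right one.
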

\begin{proof}
It is obvious from the definitions that
\be \M(V, W)[\rho] \cong \mu^{-1}_\bullet(0)^\chi/G^\bullet(V)\ee
and the result follows from proposition \ref{prop:gradquivquot}.
\end{proof}

\subsection{Ample line bundles} 
Given $V$, $W$ and $\rho \in \Hom(A, G(V))$ as in the previous section, we associate with every triple $(B,i,j) \in M^\bullet(V, W)$ the following map
\be\sigma_{k, \lambda} := \bigoplus_{\substack{h \in H \\ \out(h)=k}} B_{h,\lambda} \oplus j_{k, \lambda} : V_k(\lambda) \rightarrow \bigoplus_{\substack{h \in H \\ \out(h)=k}} V_{\inn(h)}(q^{-1} \lambda) \oplus W_k(q^{-1}\lambda)\, .\ee
Let furthermore $\mu^{-1}_{k, \lambda}(0) \subseteq M^\bullet(V, W)$ denote the zero set of $\mu_{k, \lambda}$.
\begin{defn}
For every $k \in I$ and every $\lambda \in \Hom(A, \C^*)$, define the following varieties:
\bea \Mtilde(k, \lambda)&:=& \{(B, i, j) \in \mu_{k,\lambda}^{-1}(0) : \ker \sigma_{k, \lambda} =0 \} / GL(V_k(\lambda))\,, \\
\Mtildeo(k, \lambda)&:=& \mu_{k,\lambda}^{-1}(0)^\chi / GL(V_k(\lambda))\,,\\
\Mhat^\bullet(k, \lambda)&:=& \mu^{-1}_\bullet(0)^\chi/GL(V_k(\lambda))
\eea
and
\bea
M'(k,\lambda)&:=& \bigoplus_{\substack{(h, \lambda') \in H \times \Hom(A, \C^*)\\ (k, \lambda) \notin \{(\inn(h), \lambda'), (\out(h), \lambda')\}}} \Hom(V_{\out(h)}(\lambda'), V_{\inn(h)}(q^{-1}\lambda')) \nn \\
& & \oplus \bigoplus_{(k', \lambda')\neq (k,q^{-1}\lambda)} \Hom(W_{k'}(\lambda'), V_{k'}(q^{-1}\lambda') ) \oplus \bigoplus_{(k',\lambda') \neq(k,\lambda)} \Hom(V_{k'}(\lambda'), W_{k'}(q^{-1}\lambda'))\,.\qquad 
\eea
\end{defn}

\begin{prop}
\label{prop:principalGklbundle}
For every $k \in I$ and every $\lambda \in \Hom(A, \C^*)$, we have the following diagram
$$
\begin{CD}
\Mhat^\bullet(k, \lambda)  @>i>> \Mtildeo (k, \lambda) @>j>> \Mtilde(k,\lambda) @= \M(V_k(\lambda), E_{k, \lambda}) \times  M'(k,\lambda) \\
@V\pi_{k,\lambda}VV &&&& @VVp_1V\\
\M(V, W)[\rho] &&&&&& \M(V_k(\lambda), E_{k, \lambda})
\end{CD} 
$$
where $i$ (resp. $j$) is a closed (resp. open) immersion, $p_1$ denotes projection on the first factor and
\be E_{k,\lambda}:= \bigoplus_{\substack{h \in H\\ \out(h)=k}} V_{\inn(h)}(q^{-1}\lambda) \oplus W_k(q^{-1}\lambda) \, .\ee
The left vertical arrow makes $\Mhat(k, \lambda)$ a principal $G_{k,\lambda}:=\prod_{(k',\lambda') \neq (k,\lambda)} GL(V_{k'}(\lambda'))$-bundle over $\M(V, W)[\rho]$.
\end{prop}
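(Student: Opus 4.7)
This statement bundles four separate claims --- that $\pi_{k,\lambda}$ is a principal $G_{k,\lambda}$-bundle, that $i$ is a closed immersion, that $j$ is an open immersion, and the product decomposition $\Mtilde(k,\lambda)\cong\M(V_k(\lambda),E_{k,\lambda})\times M'(k,\lambda)$ --- which I would address in turn. For the principal bundle assertion, split $G^\bullet(V)=GL(V_k(\lambda))\times G_{k,\lambda}$. By Proposition~\ref{prop:gradquivquot}(i), $\mu^{-1}_\bullet(0)^\chi\to\M^\bullet(V,W)\cong\M(V,W)[\rho]$ is a principal $G^\bullet(V)$-bundle, so quotienting in two stages produces $\Mhat^\bullet(k,\lambda)=\mu^{-1}_\bullet(0)^\chi/GL(V_k(\lambda))$ with a free residual $G_{k,\lambda}$-action whose quotient is $\M(V,W)[\rho]$. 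Since $G_{k,\lambda}$ is a product of general linear groups, hence special, the bundle $\pi_{k,\lambda}$ is Zariski-locally trivial.

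For the closed immersion $i$, the inclusion $\mu^{-1}_\bullet(0)\hookrightarrow\mu^{-1}_{k,\lambda}(0)$ is cut out by the additional polynomial equations $\mu_{k',\lambda'}=0$ for $(k',\lambda')\neq(k,\lambda)$ and is therefore closed. That $\chi$-semistability for the full $G^\bullet(V)$-action forces $\chi$-semistability for the restricted $GL(V_k(\lambda))$-action I would deduce from Proposition~\ref{prop:stab}: any nonzero $v\in\ker\sigma_{k,\lambda}$ spans a one-dimensional $B$-invariant subspace of $V_k(\lambda)\subseteq V$ contained in $\ker j$, contradicting full semistability. For the open immersion $j$, I would establish a King-type criterion: $\chi$-semistability for the restricted $GL(V_k(\lambda))$-action on $\mu^{-1}_{k,\lambda}(0)$, with character $\chi|_{GL(V_k(\lambda))}=\det^{-1}$, is equivalent to $\ker\sigma_{k,\lambda}=0$. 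One direction is the above argument; the other follows from the Hilbert--Mumford criterion applied to one-parameter subgroups of $GL(V_k(\lambda))$ concentrated on $\ker\sigma_{k,\lambda}$. Since $\{\ker\sigma_{k,\lambda}=0\}$ is open in $\mu^{-1}_{k,\lambda}(0)$, the inclusion descends to an open immersion after quotienting by $GL(V_k(\lambda))$.

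For the product decomposition, I would split the variables $(B,i,j)\in M^\bullet(V,W)$ into the block acted upon non-trivially by $GL(V_k(\lambda))$ --- namely $\sigma_{k,\lambda}$ together with the dual ``incoming'' datum assembled from the arrows whose target is $V_k(\lambda)$ --- and the complementary background collected in $M'(k,\lambda)$. The momentum equation $\mu_{k,\lambda}=0$ factors as the vanishing of a composite $\tau'\circ\sigma_{k,\lambda}$, where $\tau':E_{k,\lambda}\to V_k(q^{-2}\lambda)$ is built purely from data in $M'(k,\lambda)$; in particular it does not involve the incoming datum. Modding out by $GL(V_k(\lambda))$ on the locus $\ker\sigma_{k,\lambda}=0$ thus recovers the one-vertex, no-edge quiver variety $\M(V_k(\lambda),E_{k,\lambda})$ from the first block, while the unconstrained background forms $M'(k,\lambda)$; the map $p_1$ is then tautologically the first-factor projection.

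The main obstacle is the product decomposition, where one must show that the coupling of $\sigma_{k,\lambda}$ to background variables through $\mu_{k,\lambda}=0$ does not obstruct a genuine scheme-theoretic product structure. This requires carefully tracking how the momentum equation distributes between the two blocks and verifying that the natural map from the $GL(V_k(\lambda))$-quotient of the first block onto $\M(V_k(\lambda),E_{k,\lambda})$ is an isomorphism rather than merely an immersion.
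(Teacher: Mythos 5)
Your proposal covers more ground than the paper's own proof, which is only three sentences and does not touch the product decomposition $\Mtilde(k,\lambda) = \M(V_k(\lambda),E_{k,\lambda}) \times M'(k,\lambda)$ at all (the $@=$ in the diagram is treated as definitional). The paper simply observes that $\Mtildeo(k,\lambda)$ is an open subvariety of $\Mtilde(k,\lambda)$, that $\Mhat^\bullet(k,\lambda)$ is cut out of $\Mtildeo(k,\lambda)$ by the closed conditions $\mu_{k',\lambda'}=0$ for $(k',\lambda')\neq(k,\lambda)$, and that the principal $G_{k,\lambda}$-bundle structure follows from Proposition~\ref{prop:gradquivquot}. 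Your treatment of the principal bundle assertion (two-stage quotient, specialness of $G_{k,\lambda}$) is essentially the same as the paper's, just spelled out a bit more.

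The gap is in your reading of $\mu_{k,\lambda}^{-1}(0)^\chi$ and hence of $\Mtildeo(k,\lambda)$. You treat this superscript as referring to semistability under the \emph{restricted} $GL(V_k(\lambda))$-action with character $\det^{-1}$, and your King-type criterion then shows this is exactly the open locus $\{\ker\sigma_{k,\lambda}=0\}$ --- which would force $\Mtildeo(k,\lambda)=\Mtilde(k,\lambda)$ and render $j$ the identity. But the paper's superscript $\chi$ refers throughout to semistability for the \emph{full} group $G^\bullet(V)$, i.e.\ the condition of Proposition~\ref{prop:stab} in its graded form (no nontrivial $B$-invariant graded subspace of $\ker j$). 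This is a genuinely smaller open set inside $\{\ker\sigma_{k,\lambda}=0\}\cap\mu_{k,\lambda}^{-1}(0)$, and $j$ is a genuinely strict open immersion. The distinction matters for $i$ as well: with your reading, the image of $\Mhat^\bullet(k,\lambda) = \mu^{-1}_\bullet(0)^\chi/GL(V_k(\lambda))$ inside $\Mtildeo(k,\lambda)=\Mtilde(k,\lambda)$ is cut out by the closed conditions $\mu_{k',\lambda'}=0$ \emph{and} the open condition of full $G^\bullet(V)$-semistability, so $i$ would only be a locally closed immersion, contradicting the proposition. Under the paper's reading, both $\Mhat^\bullet$ and $\Mtildeo$ carry the same semistability condition, so $i$ is cut out purely by the momentum equations and is honestly closed. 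Your correct observation that $\ker\sigma_{k,\lambda}$ spans a $B$-invariant graded subspace of $\ker j$ is exactly what is needed here, but it should be deployed to establish the containment $\mu_{k,\lambda}^{-1}(0)^\chi \subseteq \{\ker\sigma_{k,\lambda}=0\}$ (hence the openness of $j$), rather than to compare two notions of semistability as if they coincided.

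On the product decomposition, you have correctly identified that the coupling of $\sigma_{k,\lambda}$ through $\mu_{k,\lambda}=\tau_{k,\lambda}\circ\sigma_{k,\lambda}=0$ to background data in $M'(k,\lambda)$ is the nontrivial point; the paper does not address it, and your sketch does not yet resolve it either (for fixed background the locus $\{\sigma_{k,\lambda}:\tau_{k,\lambda}\sigma_{k,\lambda}=0,\ \ker\sigma_{k,\lambda}=0\}$ depends on the rank of $\tau_{k,\lambda}$, so the naive fibered description over $M'(k,\lambda)$ would not give a constant fibre). If you pursue this, track carefully which components of $M^\bullet(V,W)$ have $V_k(\lambda)$ as source, which have it as target, and which enter $\mu_{k,\lambda}$ without involving $V_k(\lambda)$ at all, and check your bookkeeping against the explicit definition of $M'(k,\lambda)$ in the text.
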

\begin{proof}
$\Mtildeo(k,\lambda)$ clearly is an open subvariety of $\Mtilde(k, \lambda)$. $\Mhat(k, \lambda)$ is, in turn, the closed algebraic subvariety of $\Mtildeo(k,\lambda)$ defined by the equations $\mu_{k', \lambda'}=0$ for all $(k', \lambda') \neq (k, \lambda)$. Finally, $\M(V, W)[\rho] = \Mhat(k,\lambda)/G_{k,\lambda}$ by proposition \ref{prop:gradquivquot}.
\end{proof}

\begin{rem}
For every pair of integers $m,n \in \N$, denote by $\mathrm{Gr}(m,n)$ the Grassmannian manifold of $m$-dimensional subspaces of $\C^n$ and let $T^*\mathrm{Gr}(m,n)$ denote its cotangent bundle. Then, it is clear from the definitions that $\M(V_k( \lambda), E_{k,\lambda} ) \cong T^*\mathrm{Gr}(\dim V_k(\lambda),\dim E_{k,\lambda})$ and that we recover the graded Grassmannians relevant in type $\mathfrak a_1$ -- see \cite{YZ}.
\end{rem}

\begin{defn}
For every $k \in I$ and every $\lambda \in \Hom(A, \C^*)$, we define the following vector bundles:
\bea \Vtilde_k(\lambda) &:=& \mu_{k,\lambda}^{-1}(0) \times_{GL(V_k(\lambda))} V_k(\lambda) \rightarrow \Mtilde(k, \lambda)\,, \\
\Vtildeo_k(\lambda) &:=& \mu_{k,\lambda}^{-1}(0)^\chi \times_{GL(V_k(\lambda))} V_k(\lambda) \rightarrow \Mtildeo(k, \lambda) \,,\\
\Vhat_k(\lambda)&:=& \mu^{-1}_\bullet(0)^\chi \times_{GL(V_k(\lambda))}  V_k(\lambda) \rightarrow \Mhat^\bullet(k, \lambda)\,.
\eea
\end{defn}

We now establish the most important result of this section.

\begin{lem}
\label{lem:Vample}
We have
\begin{enumerate}
\item[i.] $\det \Vtilde_k(\lambda)^\vee \cong  p_1^* \mathcal O_{\M(V_k( \lambda), E_{k,\lambda} )}(1)$;
\item[ii.] $\det \Vhat_k(\lambda)^\vee \cong i^* \circ j^* \det \Vtilde_k(\lambda)^\vee$;
\item[iii.] $\det \Vhat_k(\lambda)^\vee \cong \pi_{k,\lambda}^* \det i_\rho^*\V_k(\lambda)^\vee$;
\item[iv.] $\det i_\rho^* \V_k(\lambda)^\vee \in \mathrm{Pic}(\M(V, W)[\rho])$ is an ample invertible sheaf.
\end{enumerate}
\end{lem}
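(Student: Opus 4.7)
I would prove parts (i)--(iii) by unpacking the definitions and using the diagram of Proposition \ref{prop:principalGklbundle}, and then deduce (iv) by chasing ampleness through that diagram and descending along the principal bundle $\pi_{k,\lambda}$. The main obstacle will be this final descent along the principal $G_{k,\lambda}$-bundle.

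For (i), under the identification $\Mtilde(k,\lambda)\cong\M(V_k(\lambda),E_{k,\lambda})\times M'(k,\lambda)$ from Proposition \ref{prop:principalGklbundle}, the group $GL(V_k(\lambda))$ acts only on the first factor, so $\Vtilde_k(\lambda)$ is the pullback via $p_1$ of the tautological bundle on the single-vertex quiver variety $\M(V_k(\lambda),E_{k,\lambda})$; dualising the determinant then produces $p_1^*\mathcal O(1)$ by direct comparison with the construction in Proposition \ref{prop:principalbundle}(iii), since the character $\det^{-1}$ of $GL(V_k(\lambda))$ corresponds precisely to the associated line bundle $\det V_k(\lambda)^\vee$. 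Part (ii) is immediate from the definitions: the three bundles $\Vtilde_k(\lambda)$, $\Vtildeo_k(\lambda)$ and $\Vhat_k(\lambda)$ are the associated bundles built with the same group $GL(V_k(\lambda))$ and the same fibre $V_k(\lambda)$ over the nested subsets $\mu_{k,\lambda}^{-1}(0)\supseteq\mu_{k,\lambda}^{-1}(0)^\chi\supseteq\mu^{-1}_\bullet(0)^\chi$, so $\Vhat_k(\lambda)=i^*j^*\Vtilde_k(\lambda)$. For (iii), the subgroup $G_{k,\lambda}$ acts trivially on the fibre $V_k(\lambda)$, so $\Vhat_k(\lambda)$ is naturally $G_{k,\lambda}$-equivariant with trivial fibrewise action; the factorisation $G^\bullet(V)=GL(V_k(\lambda))\times G_{k,\lambda}$ together with the associated-bundle description of $i_\rho^*\V_k(\lambda)$ identifies $\Vhat_k(\lambda)$ with $\pi_{k,\lambda}^* i_\rho^*\V_k(\lambda)$.

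Combining (i)--(iii) yields the identity $\pi_{k,\lambda}^*\det i_\rho^*\V_k(\lambda)^\vee=i^*j^*p_1^*\mathcal O(1)$. To establish (iv), I chase ampleness through this chain. First, $\mathcal O(1)$ is ample on $\M(V_k(\lambda),E_{k,\lambda})$ by Proposition \ref{prop:principalbundle}(iii), and since $M'(k,\lambda)$ is affine, $p_1^*\mathcal O(1)$ is ample on $\Mtilde(k,\lambda)$ as a line bundle on the variety projective over the affine base $\M_0(V_k(\lambda),E_{k,\lambda})\times M'(k,\lambda)$. To pass to $\Mtildeo(k,\lambda)$, I recognise the latter as a GIT quotient of $\mu_{k,\lambda}^{-1}(0)$ by $GL(V_k(\lambda))$ with canonical ample line bundle $\mathcal O_{\Mtildeo}(1)=\det\Vtildeo_k(\lambda)^\vee=j^*p_1^*\mathcal O(1)$; restriction via the closed immersion $i$ then gives that $\det\Vhat_k(\lambda)^\vee$ is ample on $\Mhat^\bullet(k,\lambda)$ as the pullback of an ample bundle to a closed subvariety of a variety projective over an affine base.

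The main obstacle is then to descend ampleness from $\Mhat^\bullet(k,\lambda)$ to $\M(V,W)[\rho]$ along the principal $G_{k,\lambda}$-bundle $\pi_{k,\lambda}$. As $G_{k,\lambda}$ is reductive, acts freely, and the $G_{k,\lambda}$-equivariant structure on $\det\Vhat_k(\lambda)^\vee=\pi_{k,\lambda}^*\det i_\rho^*\V_k(\lambda)^\vee$ is trivial on fibres, I invoke Kempf's descent lemma: for $n$ sufficiently large, the $G_{k,\lambda}$-invariant global sections of $(\det\Vhat_k(\lambda)^\vee)^{\otimes n}$ coincide with $H^0(\M(V,W)[\rho],(\det i_\rho^*\V_k(\lambda)^\vee)^{\otimes n})$; reductive averaging guarantees that they separate $G_{k,\lambda}$-orbits in $\Mhat^\bullet(k,\lambda)$, and together with the Zariski local triviality of the principal bundle (noted in the remark after Proposition \ref{prop:canampleline}) they induce a closed embedding of $\M(V,W)[\rho]=\Mhat^\bullet(k,\lambda)/G_{k,\lambda}$ into a projective space over the affine scheme $\M_0^\bullet(V,W)$, whence $\det i_\rho^*\V_k(\lambda)^\vee$ is ample, completing (iv).
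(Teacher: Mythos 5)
Your treatment of parts (i)--(iii), and of the first two steps of (iv) (ampleness on $\Mtilde(k,\lambda)$ via relative ampleness over the affine base, then restriction along the locally closed immersion to $\Mhat^\bullet(k,\lambda)$), is in line with what the paper does, the only cosmetic difference being that the paper phrases ampleness on $\Mtilde(k,\lambda)$ via the EGA criterion of a topological base of affine opens $\Mtilde(k,\lambda)_{p_1^*s}$ rather than via relative ampleness. You also correctly identify the descent along $\pi_{k,\lambda}$ as the crux of (iv).

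Where you diverge from the paper is in the descent itself, and this is where your argument has a real gap. You invoke Kempf's lemma to get the descended bundle and then assert that ``reductive averaging guarantees that [invariant sections] separate $G_{k,\lambda}$-orbits'' and that, together with Zariski local triviality, they ``induce a closed embedding.'' Neither of these steps is justified: the classical reductive-invariant separation theorem applies to closed orbits in \emph{affine} $G$-varieties, not directly to a quasi-projective $\Mhat^\bullet(k,\lambda)$; and even granting separation of orbits, that only gives set-theoretic injectivity, not a closed immersion, and certainly not the separation of tangent directions needed for very ampleness. The paper avoids all of this by arguing directly with the EGA II (4.5.2.a) characterization: it covers $\pi_{k,\lambda}^{-1}(U)$ by basic opens $\Mhat^\bullet(k,\lambda)_s$ coming from the ampleness of $\det\Vhat_k(\lambda)^\vee$, observes that the sections $s$ descend to sections $t$ of $(\det i_\rho^*\V_k(\lambda)^\vee)^{\otimes n}$ with $\pi_{k,\lambda}(\Mhat^\bullet(k,\lambda)_s)=\M(V,W)[\rho]_t$, and concludes that the $\M(V,W)[\rho]_t$ form a topological base. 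So the paper's descent is a constructive, purely topological verification of the ample criterion; to make your version work you would have to supply the (nontrivial) projective-GIT input that lets you upgrade orbit separation to a closed embedding, which the paper deliberately sidesteps.
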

\begin{proof}
\emph{i.}, \emph{ii.} and \emph{iii.} are obvious. We prove \emph{iv.} For every $s \in \bigoplus_{n \in \N^*} H^0(\M(V_k( \lambda), E_{k,\lambda} ), \mathcal O_{\M(V_k( \lambda), E_{k,\lambda} )}(1)^{\otimes n})$, let
\be \M(V_k( \lambda), E_{k,\lambda} )_s:= \{ x \in \M(V_k( \lambda), E_{k,\lambda} ) : s(x) \neq 0\}\, .\ee
It follows from the ampleness of $\mathcal O_{\M(V_k( \lambda), E_{k,\lambda} )}(1)$, proposition \ref{prop:Mquot}, that those of the $\M(V_k( \lambda), E_{k,\lambda} )_s$ that are affine constitute an open affine covering of $\M(V_k( \lambda), E_{k,\lambda} )$ -- see \eg EGA II, Theorem 4.5.2.a'.
Obviously, the $p_1^{-1}(\M(V_k( \lambda), E_{k,\lambda} )_s) = \M(V_k( \lambda), E_{k,\lambda} )_s \times M'(k,\lambda)$ constitute an open affine covering of $\M(V_k(\lambda), E_{k, \lambda}) \times  M'(k,\lambda)$. By virtue of \emph{i.}, every $s \in \bigoplus_{n \in \N^*} H^0(\M(V_k( \lambda), E_{k,\lambda} ), \mathcal O_{\M(V_k( \lambda), E_{k,\lambda} )}(1)^{\otimes n})$ pulls back to a section $p_1^*s \in \bigoplus_{n \in \N^*} H^0(\M(V_k( \lambda), E_{k,\lambda} ), (\det \Vtilde_k(\lambda)^\vee)^{\otimes n})$ in such a way that
\be (\M(V_k(\lambda), E_{k, \lambda}) \times  M'(k,\lambda))_{p_1^*s} :=  \{x \in \M(V_k(\lambda), E_{k, \lambda}) \times  M'(k,\lambda): p_1^*s (x) \neq 0\} = \M(V_k( \lambda), E_{k,\lambda} )_s \times M'(k,\lambda)\, .\nn\ee
Thus, those $(\M(V_k(\lambda), E_{k, \lambda}) \times  M'(k,\lambda))_{p_1^*s}$ that are affine cover $\M(V_k(\lambda), E_{k, \lambda}) \times  M'(k,\lambda)$ and $\det \Vtilde_k(\lambda)^\vee$ is ample. Restricting $\det \Vtilde_k(\lambda)^\vee$ to the locally closed subvariety $\Mhat^\bullet(k, \lambda)$ yields an ample invertible sheaf and, by \emph{ii.}, it follows that $\det \Vhat_k(\lambda)^\vee$ is ample.

Now, let $U \subseteq \M(V, W)[\rho]$ be any Zariski-open set. By virtue of proposition \ref{prop:principalGklbundle}, $\Mhat^\bullet(k,\lambda)$ is a principal $G_{k,\lambda}$-bundle over $\M(V, W)[\rho]$ with projection $\pi_{k,\lambda} : \Mhat^\bullet(k,\lambda) \rightarrow \M(V, W)[\rho]$ and it follows that $\pi_{k, \lambda}^{-1}(U) \subseteq \Mhat^\bullet(k, \lambda)$ is a Zariski-open set. Since $\det \Vhat_k(\lambda)^\vee$ is ample, there exists a non-empty $S \subseteq \bigoplus_{n \in \N^*} H^0(\Mhat^\bullet(k, \lambda), (\det \Vhat_k(\lambda)^\vee)^{\otimes n})$ such that
\be \pi_{k, \lambda}^{-1}(U) = \bigcup_{s \in S} \Mhat^\bullet(k,\lambda)_s\, ,\ee
where we have set
\be \Mhat^\bullet(k,\lambda)_s := \{x \in \Mhat^\bullet(k,\lambda) : s(x) \neq 0\}\, .\ee
Observe that the sections in $\bigoplus_{n \in \N^*} H^0(\Mhat^\bullet(k, \lambda), (\det \Vhat_k(\lambda)^\vee)^{\otimes n})$ are constant on $G_{k,\lambda}$-orbits. Hence, each section $s \in S$ defines a unique section $t \in \bigoplus_{n \in \N^*} H^0(\M(V, W)[\rho], (\det \V_k(\lambda)^\vee)^{\otimes n})$ by setting
\be t(G_{k,\lambda} . x) := s(x)\, , \ee
for every $G_{k, \lambda}$-orbit in $\M(V, W)[\rho]$. Then, it is clear that
\be \M(V, W)[\rho]_t := \{x \in \M(V, W)[\rho] : t(x) \neq 0\} = \pi_{k,\lambda}( \Mhat^\bullet(k, \lambda)_s)\ee
and that
\be U = \bigcup_{s \in S} \pi_{k, \lambda}(\Mhat^\bullet(k,\lambda)_s) = \bigcup_{t: \pi_{k,\lambda}^*t \in S} \M(V, W)[\rho]_t\, .\ee
We have thus proven that, as $t$ runs through $\bigoplus_{n \in \N^*} H^0(\M(V, W)[\rho], (\det \V_k(\lambda)^\vee)^{\otimes n})$, the $\M(V, W)[\rho]_t$ constitute a basis for the Zariski topology of $\M(V, W)[\rho]$ and hence that $\det \V_k(\lambda)^\vee$ is ample -- see \eg \nolinebreak{EGA II}, Theorem 4.5.2.a.
\end{proof}

\subsection{Factorization into $q$-segments} \label{sec:qseg}
We define an equivalence relation $\sim$ on $\Hom(A, \C^*)$ by setting $\lambda_1 \sim \lambda_2$ if and only if $\lambda_1 \in q^\Z \lambda_2$ and we let 
\be \Sp(W) = \bigsqcup_{\Sigma \in \Sp(W)/\sim} \Sigma \label{qseg}\ee
be the decomposition of $\Sp(W)$ into equivalence classes. Elements of $\Sp(W)/\sim$ will be referred to as \emph{$q$-segments} of $\Sp(W)$. The \emph{length} of a given $q$-segment is, by definition, the number of distinct elements in the corresponding equivalence class. A similar decomposition into $q$-segments obviously holds for $\Sp(\rho, V)$ and, by virtue of proposition \ref{VWspec}, we have an injective map $\varsigma : \Sp(\rho, V)/\sim \,\, \hookrightarrow \,\, \Sp(W)/\sim$. Each $q$-segment $\Sigma$ of either $\Sp(W)/\sim$ or $\Sp(\rho, V)/\sim$ is finite and -- provided there exists $(\id_W, \varepsilon) \in A$ such that $1 \notin \varepsilon^\Z$ -- totally ordered by setting $\lambda_1 < \lambda_2$ for all $\lambda_1, \lambda_2 \in \Sigma$ such that $\lambda_1 \in q^{\N^*} \lambda_2$. 

 The decomposition (\ref{qseg}) induces the decomposition
\be W = \bigoplus_{\Sigma \in \Sp(W)/\sim} W_\Sigma \qquad \mbox{where} \qquad W_\Sigma := \bigoplus_{\lambda \in \Sigma} W(\lambda) \ee
and, subsequently, $A= \prod_{\Sigma \in \Sp(W)/\sim} A_\Sigma$ where $A_\Sigma \subset G(W_\Sigma) \times \C^*$ for all $\Sigma \in \Sp(W)/\sim$. Similarly, denoting by $[\lambda] \in \Sp(\rho, V)/ \sim$ the equivalence class of $\lambda \in \Sp(\rho, V)$, we set, for all $\Sigma \in \Sp(W)/\sim$,
\be V_\Sigma := \bigoplus_{\substack{\lambda \in \Sp(\rho, V)\\ \varsigma([\lambda]) = \Sigma}} V(\lambda)\, , \qquad \mbox{so that} \qquad V= \bigoplus_{\Sigma \in \Sp(W)/\sim} V_{\Sigma}\, .\ee
For every $\Sigma \in \Sp(W)/\sim$, $V_{\Sigma}$ is an $A_\Sigma$-module through $\rho_\Sigma \in \Hom(A_\Sigma, G(V_{\Sigma}))$ and $\rho= \bigoplus_{\Sigma \in \Sp(W)/\sim} \rho_\Sigma$.

\begin{prop} \label{prop:qsegdecomp} Let $W=\bigoplus_\Sigma W_\Sigma$ and $V= \bigoplus V_\Sigma$ be the respective decompositions of $W$ and $V$ into $q$-segments as above. We have
\be \label{eq:MaMSigma} \M(V,W)^A \cong \prod_{\Sigma \in \Sp(W)/\sim} \M(V_\Sigma, W_\Sigma)^{A_\Sigma}\,.\ee
Furthermore, for every $\rho = \bigoplus_\Sigma \rho_\Sigma \in \Hom (\prod_\Sigma A_\Sigma, \prod_\Sigma G(V_\Sigma))$, we even have
\be \label{eq:MaMSigmarho} \M(V,W)[\rho] \cong \prod_{\Sigma \in \Sp(W)/\sim} \M(V_\Sigma, W_\Sigma)[\rho_\Sigma]\,.\ee
\end{prop}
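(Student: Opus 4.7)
The plan is to prove (\ref{eq:MaMSigmarho}) first by exhibiting a factorisation of the graded quiver variety $\M^\bullet(V,W)$, and then to derive (\ref{eq:MaMSigma}) by taking a disjoint union over the components indexed by $\rho$. The starting point is the identification $\M(V,W)[\rho]\cong \M^\bullet(V,W)$ established in the previous subsection. Since, by (\ref{eq:inclusions}), the maps $B_{h,\lambda}$, $i_{k,\lambda}$ and $j_{k,\lambda}$ all shift the $\Hom(A,\C^*)$-weight by $q^{-1}$, and since each $q$-segment is by definition stable under multiplication by $q^{\pm 1}$, these linear maps are identically zero whenever their source and target weights belong to distinct $q$-segments. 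Combined with the injection $\varsigma:\Sp(\rho,V)/{\sim}\hookrightarrow\Sp(W)/{\sim}$, this shows that every $(B,i,j)\in M^\bullet(V,W)$ decomposes canonically as $(B,i,j)=\bigoplus_\Sigma(B_\Sigma,i_\Sigma,j_\Sigma)$ with $(B_\Sigma,i_\Sigma,j_\Sigma)\in M^\bullet(V_\Sigma,W_\Sigma)$, \ie
\be M^\bullet(V,W)=\prod_{\Sigma\in\Sp(W)/\sim} M^\bullet(V_\Sigma,W_\Sigma)\, .\ee

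Next, I would observe that the momentum map $\mu_\bullet$ factors component-wise, so that $\mu_\bullet^{-1}(0)=\prod_\Sigma \mu_{\bullet,\Sigma}^{-1}(0)$, and that both the group $G^\bullet(V)=\prod_\Sigma G^\bullet(V_\Sigma)$ and the character $\chi=\prod_\Sigma \chi_\Sigma$ split accordingly — this is immediate from (\ref{charfact}) since the $q$-segment decomposition of $V$ refines its $I\times\Hom(A,\C^*)$-grading. The delicate point is to verify that $\chi$-stability itself factors. By the analogue of proposition \ref{prop:stab} applied to $M^\bullet(V,W)$, a triple is unstable iff its $\ker j$ contains a non-trivial $B$-invariant $I\times\Hom(A,\C^*)$-graded subspace $S$. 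Because $B$ preserves the $q$-segment decomposition, any such $S$ splits as $S=\bigoplus_\Sigma S_\Sigma$ with each $S_\Sigma\subseteq\ker j_\Sigma$ being $B_\Sigma$-invariant and graded. Hence $(B,i,j)$ is $\chi$-semistable if and only if each $(B_\Sigma,i_\Sigma,j_\Sigma)$ is $\chi_\Sigma$-semistable, giving $\mu_\bullet^{-1}(0)^\chi=\prod_\Sigma\mu_{\bullet,\Sigma}^{-1}(0)^{\chi_\Sigma}$.

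Taking the GIT quotient then yields
\be \M^\bullet(V,W)=\mu_\bullet^{-1}(0)^\chi/G^\bullet(V)\cong\prod_{\Sigma}\mu_{\bullet,\Sigma}^{-1}(0)^{\chi_\Sigma}/G^\bullet(V_\Sigma)=\prod_{\Sigma}\M^\bullet(V_\Sigma,W_\Sigma)\, ,\ee
which, re-expressed via $\M(V,W)[\rho]\cong\M^\bullet(V,W)$ and $\M(V_\Sigma,W_\Sigma)[\rho_\Sigma]\cong\M^\bullet(V_\Sigma,W_\Sigma)$, gives (\ref{eq:MaMSigmarho}). For (\ref{eq:MaMSigma}), I would invoke proposition \ref{prop:conndecM} to write $\M(V,W)^A=\bigsqcup_{[\rho]}\M(V,W)[\rho]$, and observe that specifying a conjugacy class $[\rho]\in\Hom(A,G(V))/G(V)$ (for the fixed decomposition $V=\bigoplus_\Sigma V_\Sigma$ determined by $\varsigma$) is equivalent to specifying a collection $([\rho_\Sigma])_\Sigma$ with $\rho_\Sigma\in\Hom(A_\Sigma,G(V_\Sigma))$. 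Swapping the disjoint union with the product over $\Sigma$ (which is legitimate since the product is finite) then yields (\ref{eq:MaMSigma}).

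The main technical obstacle I expect is the careful verification that stability is preserved under the factorisation — \ie that one cannot have a $\chi$-semistable triple whose $\Sigma$-th component is destabilised by a subspace that mixes $q$-segments. This is precisely why the argument relies on extracting $S$ as an $I\times\Hom(A,\C^*)$-graded subspace and exploiting that $B$ respects the finer $q$-segment decomposition of $V$. Once this point is settled, the rest of the proof is essentially bookkeeping.
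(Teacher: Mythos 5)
The paper gives no proof here (the proof is ``Left to the reader''), so your job is to supply one, and your plan is the natural one: pass to the graded quiver variety via $\M(V,W)[\rho]\cong\M^\bullet(V,W)$, observe that everything in sight (the space $M^\bullet$, the moment map, the group $G^\bullet(V)$, the character, and the semistable locus) factors over the $q$-segments, and conclude that the GIT quotient factors. The key verification — that stability is preserved under the factorisation — is handled correctly via King's criterion for $G^\bullet(V)$: since the destabilising subspace $S$ may be taken $I\times\Hom(A,\C^*)$-graded and $B$ preserves the $q$-segment decomposition, $S$ splits as $\bigoplus_\Sigma S_\Sigma$ and the non-triviality of $S$ is equivalent to the non-triviality of some $S_\Sigma$. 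This establishes (\ref{eq:MaMSigmarho}), which is the statement actually used later in the proof of proposition~\ref{prop:sl2gen}. One small wording quibble: you say the maps $B_{h,\lambda}$, $i_{k,\lambda}$, $j_{k,\lambda}$ ``are identically zero whenever their source and target weights belong to distinct $q$-segments,'' but by (\ref{eq:inclusions}) their target weight is always $q^{-1}$ times the source weight, hence always in the \emph{same} $q$-segment — there are simply no components of $B$, $i$, $j$ relating distinct $q$-segments in $M^\bullet(V,W)$, which is the cleaner way to say what you mean.

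The one place where I would push back is your derivation of (\ref{eq:MaMSigma}) from (\ref{eq:MaMSigmarho}). You write that specifying a conjugacy class $[\rho]$ ``(for the fixed decomposition $V=\bigoplus_\Sigma V_\Sigma$ determined by $\varsigma$)'' amounts to specifying a tuple $([\rho_\Sigma])_\Sigma$. But the decomposition $V=\bigoplus_\Sigma V_\Sigma$ is not intrinsic to $V$ — it is determined by $\rho$, via the $A$-weight decomposition (\ref{eq:Vdecomp}). Two different $\rho,\rho'\in\Hom(A,G(V))$, both giving non-empty fixed components, can induce different dimension vectors $(\dim V_\Sigma)_\Sigma$. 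Consequently, for a \emph{fixed} choice of $V_\Sigma$'s, the right-hand side $\prod_\Sigma\M(V_\Sigma,W_\Sigma)^{A_\Sigma}=\bigsqcup_{([\rho_\Sigma])_\Sigma}\prod_\Sigma\M(V_\Sigma,W_\Sigma)[\rho_\Sigma]$ only accounts for those components $\M(V,W)[\rho]$ of the left-hand side for which $\rho$ happens to induce that particular decomposition. The honest statement — in the spirit of the displayed equation preceding (\ref{Eq:LBBdecomp}) — would insert a disjoint union over isomorphism classes $[V_\Sigma]$ with $\bigoplus_\Sigma V_\Sigma\cong V$. This looseness is present in the statement of the proposition itself and is not specific to your argument, but since you explicitly present the passage from (\ref{eq:MaMSigmarho}) to (\ref{eq:MaMSigma}) as unproblematic bookkeeping, it is worth flagging that the bookkeeping does hide this $\rho$-dependence of the $V_\Sigma$'s.
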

\begin{proof}
Left to the reader.
\end{proof}

For every $\Sigma \in \Sp(W) / \sim$ let
\be \V_\Sigma := \mu_\Sigma^{-1}(0)^s \times_{G(V_\Sigma)} V_\Sigma\ee
and denote by $i_{\rho_\Sigma}: \M(V_\Sigma, W_\Sigma)[\rho_\Sigma] \hookrightarrow \M(V_\Sigma, W_\Sigma)$ the natural inclusion.

\begin{prop} \label{Prop:Comult}
For every $\rho = \bigoplus_\Sigma \rho_\Sigma \in \Hom (\prod_\Sigma A_\Sigma, \prod_\Sigma G(V_\Sigma))$, we have
\be i_\rho^* \V = \bigoplus_{\Sigma \in \Sp(W)/\sim} i_{\rho_\Sigma}^*\V_\Sigma \boxtimes \mathcal O_{\prod_{\Sigma' \neq \Sigma} \M(V_{\Sigma'}, W_{\Sigma'})[\rho_{\Sigma'}] } \ee
\end{prop}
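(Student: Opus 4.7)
The plan is to combine two ingredients already at our disposal: the $q$-segment factorization of the fixed-point locus stated in Proposition \ref{prop:qsegdecomp}, and the $A$-weight decomposition of the tautological bundle given in (\ref{TautDecomp}). First I would regroup the weights of $i_\rho^* \V$ according to their $q$-segments: using the injection $\varsigma : \Sp(\rho, V)/\sim \,\hookrightarrow\, \Sp(W)/\sim$, equation (\ref{TautDecomp}) can be rewritten as
\be i_\rho^* \V = \bigoplus_{\Sigma \in \Sp(W)/\sim} i_\rho^* \V[\Sigma]\,, \qquad \text{where} \qquad i_\rho^* \V[\Sigma] := \bigoplus_{\substack{\lambda \in \Sp(\rho, V)\\ \varsigma([\lambda])=\Sigma}} i_\rho^* \V(\lambda)\,. \ee
This is a decomposition as $G(V)$-equivariant sheaves on $\M(V, W)[\rho]$, with each summand indexed by a $q$-segment of $\Sp(W)$.

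Next I would observe that under the identification (\ref{eq:MaMSigmarho}) of Proposition \ref{prop:qsegdecomp}, the summand $i_\rho^* \V[\Sigma]$ is, by its very construction, built only from data attached to the $\Sigma$-block of $V$: the inclusions (\ref{eq:inclusions}) imply that $B_h$, $i_k$ and $j_k$ preserve the $q$-segment decomposition (since $\lambda \mapsto q^{-1}\lambda$ stays inside the same $\sim$-class), so the principal $G(V)$-bundle $\mu^{-1}(0)^\chi \to \M(V, W)[\rho]$ restricts, $q$-segment by $q$-segment, to the principal $G(V_\Sigma)$-bundle $\mu_\Sigma^{-1}(0)^s \to \M(V_\Sigma, W_\Sigma)[\rho_\Sigma]$. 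Since associated-bundle formation commutes with direct products of structure groups and fibres, the restriction of $\V$ to the $\Sigma$-block produces exactly $\V_\Sigma$, pulled back from the $\Sigma$-factor of the product (\ref{eq:MaMSigmarho}).

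Concretely, letting $p_\Sigma : \prod_{\Sigma'} \M(V_{\Sigma'}, W_{\Sigma'})[\rho_{\Sigma'}] \to \M(V_\Sigma, W_\Sigma)[\rho_\Sigma]$ denote the projection on the $\Sigma$-factor, the previous paragraph yields $i_\rho^* \V[\Sigma] \cong p_\Sigma^*\, i_{\rho_\Sigma}^* \V_\Sigma$, which is precisely $i_{\rho_\Sigma}^* \V_\Sigma \boxtimes \mathcal O_{\prod_{\Sigma' \neq \Sigma} \M(V_{\Sigma'}, W_{\Sigma'})[\rho_{\Sigma'}]}$. Summing over $\Sigma$ gives the claimed formula.

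The only genuinely technical point—and the step I would treat with the most care—is the compatibility of the GIT quotient with the $q$-segment splitting: one must check that the $\chi$-semistability condition of Proposition \ref{prop:stab}, when restricted to $A$-fixed triples, decomposes into the corresponding stability conditions on each $(V_\Sigma, W_\Sigma)$ block. This is essentially the content of Proposition \ref{prop:qsegdecomp}, but invoking it at the level of the principal bundles $\mu^{-1}(0)^\chi$ (rather than just the quotients) is what lets one transport the associated-bundle construction through the factorization and identify the summands of $i_\rho^* \V$ as external tensor products of $i_{\rho_\Sigma}^* \V_\Sigma$ with structure sheaves on the complementary factors.
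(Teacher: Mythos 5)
The paper gives no argument here---Proposition \ref{Prop:Comult} is stated with ``Left to the reader''---so there is nothing to compare your proof against. Your argument is the natural one and is essentially correct: regroup the $A$-weight decomposition (\ref{TautDecomp}) of $i_\rho^*\V$ by $q$-segments using $\varsigma$, observe from (\ref{eq:inclusions}) that $B$, $i$, $j$ carry $V(\lambda)$ to weight spaces in the same $\sim$-class so that the entire triple (and the stability condition of Proposition \ref{prop:stab}, hence the GIT quotient) splits block-by-block as in Proposition \ref{prop:qsegdecomp}, and then use that formation of associated bundles commutes with this factorization to identify $i_\rho^*\V[\Sigma]$ with $p_\Sigma^*\, i_{\rho_\Sigma}^*\V_\Sigma$. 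You also correctly isolate the one genuinely nontrivial step---that semistability restricted to $A$-fixed triples factors into semistability on each $\Sigma$-block---which is indeed what makes the principal-bundle factorization go through.

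One imprecision worth flagging. You describe (\ref{TautDecomp}) as a decomposition of ``$G(V)$-equivariant sheaves''; but $\V$ is defined as $\mu^{-1}(0)^\chi \times_{G(V)} V$, so $G(V)$ has already been quotiented out and $\V$ is not $G(V)$-equivariant---the relevant equivariance is under $G(W)\times\C^*$ (equivalently $A$, acting via $\rho\times\id$). Relatedly, the phrase ``the principal $G(V)$-bundle $\mu^{-1}(0)^\chi \to \M(V,W)[\rho]$ restricts $q$-segment by $q$-segment'' is slightly loose: the full $G(V)$-bundle over $\M(V,W)[\rho]$ contains representatives not satisfying (\ref{eq:rho}); what actually happens is a reduction of structure group to $G^\bullet(V)=\prod_{(k,\lambda)}GL(V_k(\lambda))$, after which the bundle and the fibre $V=\bigoplus_\Sigma V_\Sigma$ factor over $q$-segments. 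This is exactly the graded quiver variety description $\M(V,W)[\rho]\cong\mu^{-1}_\bullet(0)^\chi/G^\bullet(V)$ of Proposition \ref{prop:gradquivquot}, and phrasing the reduction through it would make the middle step of your argument airtight.
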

\begin{proof} Left to the reader.
\end{proof}

\subsection{Bia\l ynicki-Birula type decomposition for individual $q$-segments}
In general, within any given $q$-segment of length strictly greater than $1$, further factorizations like (\ref{eq:MaMSigma}) and (\ref{eq:MaMSigmarho}) do not hold. A $\C^*$-action can nonetheless be defined on the relevant quiver varieties, that admits as its fixed point set the desired factorization, and that allows one to perform a decomposition \`a la Bia\l ynicki-Birula with respect to that fixed point set. Assume indeed that the abelian reductive subgroup $A$ of $G(W)\times\C^*$ is such that $\Sp(W)=\Sigma$ consists of a single $q$-segment of length $l \in \N^*$. Write $\Sigma=\{\lambda_1, \dots, \lambda_l\}$, the numbering being chosen in such a way that $\lambda_i <\lambda_{i+1}$ for all $i\in \range{1}{l}$. Let for short $W_i:=W(\lambda_i)$, so that the decomposition of $W$ into eigenspaces reads $W=W_1 \oplus W_2 \oplus \cdots \oplus W_l$. Write similarly $A_i:=A|_{W_i}=\lambda_i \,\id_{W_i} \in G(W_i)\times \Cx$ for every $i\in\range{1}{l}$, so that $A=(A_1, \dots , A_l)$. Define then a group homomorphism $\gamma : \C^* \rightarrow G(W)\times \Cx$ by setting
\be \gamma(t) = t^{m_1} \id_{W_1} \oplus \cdots \oplus t^{m_l} \id_{W_l} \,,\ee
for some fixed choice of $m_1<m_2<\cdots <m_l$. It is easily shown that for every $I$-graded $\C$-vector space $V$
\bea \bigsqcup_{\substack{[V_1],\dots , [V_l]\\V\cong V_1\oplus \cdots \oplus V_l}} \M(V_1, W_1)^{A_1} \times  \cdots \times \M(V_l, W_l)^{A_l} &\rightarrow &\M(V, W)^A\nn\\
\left ([B_1, i_1, j_1], \dots, [B_l, i_l, j_l]\right ) &\mapsto & [B_1\oplus \cdots \oplus  B_l, i_1\oplus \cdots \oplus i_l, j_1 \oplus \cdots\oplus j_l]\nn\eea
defines a closed embedding whose image is $\M(V, W)^{\left \langle A, \gamma(\C^*)\right \rangle}$, where $\left \langle A, \gamma(\C^*)\right \rangle$ denotes the Zariski closed subgroup of $G(W)\times\C^*$ generated by $A$ and $\gamma(\C^*)$. 
We can thus identify the fixed point set $\M(V, W)^{\left \langle A, \gamma(\C^*)\right \rangle}$ with $\M(V_1, W_1)^{A_1} \times \cdots \times \M(V_l, W_l)^{A_l}$ and similarly $\LL(V,W)^{\left \langle A,\gamma(\Cx)\right \rangle}$ with $\LL(V_1, W_1)^{A_1} \times \cdots \times \LL(V_l, W_l)^{A_l}$. The latter can further be decomposed into connected components as in proposition \ref{prop:conndecM}, \ie
\be \LL(V_1, W_1)^{A_1} \times \cdots \times  \LL(V_l, W_l)^{A_l}= \bigsqcup_{\substack{\rho_1\in\Hom(A_1, G(V_1))\\ \vdots \\ \rho_l\in\Hom(A_l, G(V_l))}} \LL(V_1, W_1)[\rho_1] \times \cdots \times \LL(V_l, W_l)[\rho_l]\,.\ee
One can therefore define the attracting sets
\be \LL^+(V_1, W_1; \cdots ;V_l, W_l)[\rho_1,\dots, \rho_l] := \left \{x\in\M(V, W)^A: \lim_{t\to 0} \gamma(t) * x \in \prod_{i=1}^l\LL(V_i, W_i)[\rho_i] \right \}\,.\ee
\begin{prop}
\label{prop:BBdecomp}
With $A$ and $W=W_1 \oplus \cdots \oplus W_l$ as above, let $V$ be some $I$-graded $\C$-vector space and let $\rho \in \Hom(A,G(V))$. Then,
\begin{enumerate}
\item[i.] for every $l$-tuple of $I$-graded $\C$-vector spaces $(V_1, \dots , V_l)$ such that $V\cong V_1 \oplus \cdots \oplus V_l$ and every $l$-tuple $(\rho_1, \dots, \rho_l) \in \Hom(A, G(V_1)) \times \cdots \times \Hom(A, G(V_l))$ such that $\rho = \rho_1 \oplus \cdots \oplus \rho_l$, $\LL^+(V_1, W_1; \cdots ;V_l, W_l)[\rho_1,\dots, \rho_l]$ is a nonsingular locally closed subvariety of $\M(V,W)[\rho]$, isomorphic to the total space of the subbundle $T\M(V,W)[\rho]|_{\LL(V_1, W_1)^{A_1} \times \cdots \times \LL(V_l, W_l)^{A_l}}^+$ of the restricted tangent bundle $T\M(V,W)[\rho]|_{\LL(V_1, W_1)^{A_1} \times \cdots \times \LL(V_l, W_l)^{A_l}}$, whose sections transform with strictly positive weights under $\gamma(\Cx)$;
\item[ii.] the decomposition
\be \LL(V,W)[\rho] = \bigsqcup_{\substack{[V_1],\dots , [V_l]\\V\cong V_1\oplus \cdots \oplus V_l}} \bigsqcup_{\substack{\rho_1\in\Hom(A_1, G(V_1))\\ \vdots \\ \rho_l\in\Hom(A_l, G(V_l))}} \LL^+(V_1, W_1; \cdots ;V_l, W_l)[\rho_1,\dots, \rho_l]\label{Eq:LBBdecomp}\ee
is \emph{good} in the terminology of \cite{CarrellGoresky}.
\end{enumerate}
\end{prop}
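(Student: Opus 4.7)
The plan is to identify the stated decomposition with the classical Bia\l ynicki-Birula (BB) decomposition attached to the $\C^*$-action on $\M(V,W)[\rho]$ induced by the one-parameter subgroup $\gamma$. First, I would verify that $\gamma(\C^*)$ acts on $\M(V,W)[\rho]$: since $\gamma(\C^*)$ and $A$ are commuting subgroups of $G(W)\times\C^*$, the action of $\gamma(\C^*)$ on $\M(V,W)$ preserves $\M(V,W)^A$ and commutes with the map $a\mapsto \rho(a)$ defining the component $[\rho]$, hence preserves $\M(V,W)[\rho]$. The $\gamma(\C^*)$-fixed locus inside $\M(V,W)[\rho]$ equals $\M(V,W)^{\langle A,\gamma(\C^*)\rangle}\cap \M(V,W)[\rho]$. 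By the closed embedding described just before the proposition, combined with proposition \ref{prop:conndecM} applied to each factor, this fixed locus is identified with the disjoint union of the $\prod_i \M(V_i,W_i)[\rho_i]$ over all $I$-graded decompositions $V\cong V_1\oplus\cdots\oplus V_l$ and all $(\rho_1,\dots,\rho_l)$ with $\rho_1\oplus\cdots\oplus\rho_l=\rho$.

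For part (i), I would invoke the standard BB theorem. The only subtlety is that $\M(V,W)[\rho]$ is merely quasi-projective, so the limit $\lim_{t\to 0}\gamma(t)*x$ need not exist for every $x$. However, by definition $\LL^+(V_1,W_1;\cdots;V_l,W_l)[\rho_1,\dots,\rho_l]$ consists precisely of those $x$ whose limit exists and lies in $\prod_i\LL(V_i,W_i)[\rho_i]$; as the latter is contained in the projective Lagrangian $\LL(V,W)[\rho]=\pi^{-1}(0)\cap\M(V,W)[\rho]$, the $\gamma(\C^*)$-flow is well-defined on the attracting set. The classical local analysis at a fixed point---decomposing the tangent space $T_x\M(V,W)[\rho]$ by $\gamma(\C^*)$-weights and integrating the strictly positive part---then shows that each attracting set is a nonsingular locally closed subvariety of $\M(V,W)[\rho]$ isomorphic to the total space of the positive-weight subbundle of the restricted tangent bundle, as claimed.

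For part (ii), I would show that $\LL(V,W)[\rho]$ is exhausted by the attracting sets and that the resulting stratification is \emph{good} in the sense of Carrell-Goresky, meaning that each stratum is an affine bundle over a projective component of the fixed locus and that the closure of each stratum is a union of other strata. The exhaustion follows because $\gamma(\C^*)$ preserves $\pi^{-1}(0)$ (the point $0\in\M_0(V,W)$ being fixed under $G(W)\times\C^*$), so for every $x\in\LL(V,W)[\rho]$ the whole $\gamma(\C^*)$-orbit remains inside the projective variety $\LL(V,W)[\rho]$, forcing the $t\to 0$ limit to exist and to lie in the fixed locus already described. The closure condition follows from the upper semi-continuity of the limit map, together with the ordering of fixed-point components dictated by the strictly increasing weights $m_1<\cdots<m_l$ of $\gamma$; this is the standard by-product of the BB stratification on a smooth variety.

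The main obstacle I anticipate is to handle the non-projectivity of $\M(V,W)[\rho]$ and to verify the closure condition rigorously. Both are circumvented by restricting attention to the attracting locus of the projective Lagrangian $\LL(V,W)[\rho]$, where the classical BB theorem applies verbatim; the only bookkeeping left is the combinatorial identification of the fixed-point strata with the products of $\LL(V_i,W_i)[\rho_i]$, which is already provided by the closed embedding preceding the proposition.
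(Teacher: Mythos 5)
Your overall strategy matches the paper's: both you and the author identify the decomposition with a Bia\l ynicki-Birula decomposition attached to the one-parameter subgroup $\gamma$, identify the fixed locus with the products $\prod_i\LL(V_i,W_i)[\rho_i]$, and then appeal to general BB machinery. The difference is one of presentation --- the paper dispatches both (i) and (ii) by citing \cite{NakTens}, where the relevant attracting-set and filterability statements are established for quiver varieties, and then \cite{BBCMG} for the ``good'' property, whereas you reconstruct the argument from scratch. That is a reasonable choice; however, your reconstruction contains one genuine error and one unaddressed inclusion.

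The error is in your treatment of the ``good'' condition. You assert that a good decomposition in the sense of \cite{CarrellGoresky} requires \emph{the closure of each stratum to be a union of other strata}, and you claim this is a standard by-product of the BB stratification on a smooth variety. Neither assertion is correct. The Carrell--Goresky notion asks for \emph{filterability}: an ordering of the strata such that each partial union is closed, together with the affine-bundle structure over compact nonsingular centers. The stronger closure property you invoke fails in general for BB decompositions, even of smooth projective varieties (Sommese-type counterexamples). What one actually needs here is precisely filterability, which \emph{does} hold for BB decompositions in this setting and can be extracted from the ordering of the fixed components induced by $m_1<\cdots<m_l$; but your stated argument proves more than is true and would collapse if taken literally. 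Replacing your closure claim with filterability, and noting that the projective strictly-increasing weights $m_1<\cdots<m_l$ control the limit map, repairs the step.

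A second, smaller gap: equality in (\ref{Eq:LBBdecomp}) requires both inclusions, and your exhaustion argument handles only $\LL(V,W)[\rho]\subseteq\bigsqcup\LL^+(\cdots)$. The reverse inclusion $\LL^+(V_1,W_1;\cdots;V_l,W_l)[\rho_1,\dots,\rho_l]\subseteq\LL(V,W)[\rho]$ is not a triviality from the definition, since $\LL^+$ is defined as a subset of $\M(V,W)^A$ by a condition on the $t\to 0$ limit alone, and a priori a point of $\M(V,W)[\rho]\setminus\pi^{-1}(0)$ could have its limit land in $\prod_i\LL(V_i,W_i)[\rho_i]$. One needs an argument, as in \cite{NakTens}, controlling the $\gamma$-flow on $\M_0(V,W)^A$ to rule this out. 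Your current write-up silently assumes this containment.
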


\begin{proof}
A proof of \emph{i.} is obtained by restricting results in \cite{NakTens} to the the appropriate connected components of the fixed point set. Similarly, eq. (\ref{Eq:LBBdecomp}) can be proven by adapting an argument in \cite{NakTens} and \emph{i.} then implies that this decomposition is good -- see \eg \cite{BBCMG}. Thus \emph{ii.} follows.
\end{proof}

\section{Standard modules over simply laced quantum affine algebras}
\label{sec:stdmodules}
Before turning to the definition of standard modules, let us observe that
\be R(G(W)\times \C^*) \ni x  \mapsto x \otimes \sum_{[V]} \mathcal O_{\delta(\M(V, W))}\ee
defines an injective homomorphism from $R(G(W)\times \C^*)$ to the center of $K^{G(W) \times \C^*}(\ZZ(W))$, regarded as an algebra by virtue of the construction of section \ref{Sec:Convolution}. Therefore, $R(G(W)\times \C^*)$ acts on any finite dimensional simple $K^{G(W) \times \C^*}(\ZZ(W))$-module by a homomorphism $\chi :R(G(W)\times \C^*) \to \C$. Such a homomorphism $\chi$ is given by evaluation of the character at a semisimple element $a=(s, \varepsilon) \in G(W) \times \C^*$.

\subsection{Definition}Throughout this section $a = (s, \varepsilon)$ denotes a semisimple element of $G(W) \times \mathbb C^*$ and $A$ the Zariski closure of $a^{\mathbb Z}$. $A$ is an abelian reductive subgroup of $G(W) \times \mathbb C^*$. Let $R(A)$ denote the representation ring of $A$, \ie the Grothendieck ring of the category of finite dimensional rational representations of $A$. Given $a$, we let $\chi_a : R(A) \rightarrow \mathbb C$ denote the homomorphism defined by evaluating characters at $a$ and $R(A)_a$, the localization of $R(A)$ with respect to $\ker \chi_a$. We shall denote by $\mathbb C_a$ the one-dimensional $R(A)$-module, constructed on $\mathbb C$ \emph{via} $\chi_a$. Note that $\mathbb C_a$ can also be regarded as a $G(W) \times \mathbb C^*$-module \emph{via} the homomorphism $R(G(W) \times \mathbb C^*)\rightarrow R(A)$.

Let $x \in \M_0^{\mathrm{reg}}(V^0, W)$ be fixed under the action of $A$. Obviously, $\LL(V, W)_x$ is invariant under the action of $A$. We let
\be \LL(W)_x := \bigsqcup_{[V]} \LL(V, W)_x \ee
and set
\be K^A(\LL(W)_x) := \bigoplus_{[V]} K^A(\LL(V, W)_x) \, .\ee
\begin{defprop}
For every $x \in \M_0^{\mathrm{reg}}(V^0, W)$ and $a \in G(W) \times \mathbb C^*$ as above, 
\be {\bf M}_{x, a} := K^A(\LL(W)_x) \otimes_{R(A)} \mathbb C_a\ee
is a finite dimensional $\uepslg$-module which we will refer to as a \emph{standard module}.
\end{defprop}
\begin{proof}
$K^A(\LL(W)_x)$ is a $K^A(\ZZ(W))$-module through the convolution product. That structure descends to a $K^A(\ZZ(W))/\mathrm{torsion} \otimes_{R(A)} \C_a$-module structure on ${\bf M}_{x, a}$ -- see \cite{Nakajima} for details. It follows from theorem \ref{thm:restNakajima} that ${\bf M}_{x, a}$ is a $\uqZlg$-module and, since $q$ acts over ${\bf M}_{x,a}$ by multiplication by $\varepsilon$, that it is, indeed, an $\uepslg$-module as claimed. That it is finite dimensional follows from the fact that there are only finitely many equivalence classes of $I$-graded vector spaces $[V]$ such that $\LL(V, W)$ be non-empty.
\end{proof}
\begin{rem}
It follows from proposition \ref{prop:regular} that every $x \in \M_0(V_0, W)$ is actually regular for $(I, E)$ of type $\mathfrak a$, $\mathfrak d$, or $\mathfrak e$. Moreover, it turns out that the ${\bf M}_{0,a}$ with $a \in G(W) \times \C^*$ semisimple form a basis of the Grothendieck ring $\GrMod{\uepslg}$ of the category of finite dimensional $\uepslg$-modules. From now on, we therefore restrict our attention to those standard modules and denote them simply by ${\bf M}_a$. 
\end{rem}

\subsection{Localization}
Given any $R(A)$-module $M$,  denote by $M_a$ its localization $M \otimes_{R(A)}R(A)_a$ at $a$. Let furthermore  $i: \M(W)^A \rightarrow\M(W)$ be the natural inclusion. It induces inclusions $\M(W)^A \times \M(W)^A \rightarrow \M(W)\times \M(W)$ and hence $\ZZ(W)^A \rightarrow \ZZ(W)$ that we shall also denote by $i$, the meaning of $i$ in each case being clear from the context. It follows from Thomason's localization theorem \cite{Thomason} that for $X= \LL(W)$ or $\ZZ(W)$,
\be i^*: K^A(X)\otimes_{R(A)} R(A)_a \stackrel{\sim}{\longrightarrow} K^A(X^A) \otimes_{R(A)} R(A)_a\ee
is an isomorphism. However, when $X=\ZZ(W)$, $i^*$ is not compatible with convolution. Following \cite{CG}, we thus let $\lambda_A:= {\bigwedge}_{-1}N^*$, where $N$ is the normal bundle of $\M(W)^A$ in $\M(W)$. $\lambda_A$ is invertible in the localized $K$-group $K^A(\M(W)^A)_a$ and we denote by $\lambda_A^{-1}$ its inverse.
\begin{prop} The \emph{bivariant localization map}
\be r_a:= \left ( 1 \boxtimes \lambda_A^{-1} \right ) \circ i^* \ee
is an isomorphism $K^A(\ZZ(W))_a \stackrel{\sim}{\longrightarrow} K^A(\ZZ(W)^A)_a$ and it is compatible with convolution in the sense that the following two diagrams commute:
$$
\begin{CD}
K^A(\ZZ(W))_a ^{\otimes_{R(A)_a} 2} @>\star>> K^A(\ZZ(W))_a\\
@V{r_a \otimes r_a}V{\cong}V @V{\cong}V{ r_a}V\\
K^A(\ZZ(W)^A)_a^{ \otimes_{R(A)_a} 2} @>\star>> K^A(\ZZ(W)^A)_a
\end{CD} 
\qquad\quad  
\begin{CD}
{}\\
K^A(\ZZ(W))_a \underset{R(A)_a}{\otimes}
K^A(\LL(W))_a @>\star>> K^A(\LL(W))_a\\
@V{r_a \otimes i^*}V{\cong}V @V{\cong}V{i^*}V\\
K^A(\ZZ(W)^A)_a \tens{R(A)_a} K^A(\LL(W)^A)_a @>\star>> K^A(\LL(W)^A)_a
\end{CD}
$$
\end{prop}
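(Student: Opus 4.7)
The proof splits into two pieces: establishing the isomorphism, then verifying the two compatibility diagrams. The first is a direct application of Thomason's concentration theorem once we know that $\lambda_A$ is invertible after localisation at $a$. Indeed, by Thomason, $i^*\colon K^A(\ZZ(W))_a \xrightarrow{\sim} K^A(\ZZ(W)^A)_a$ is already an isomorphism, so it suffices to show that multiplication by $(1 \boxtimes \lambda_A^{-1})$ is a well-defined automorphism of $K^A(\ZZ(W)^A)_a$. The character of $\lambda_A = \bigwedge_{-1} N^*$ at $a$ equals $\prod_\nu (1 - \chi_\nu(a))$, where the $\chi_\nu$ are the weights of $A$ on the fibres of $N^*$; by the very definition of the fixed locus, none of the $\chi_\nu(a)$ equals $1$, so this product is a unit in the residue field of $\ker\chi_a$. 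Hence $\lambda_A$, and therefore its pullback to $\ZZ(W)^A$, is invertible in the localisation, and the composite $r_a$ is an isomorphism.

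For the first compatibility diagram, I would set $X := \M(W)$ and consider the Cartesian squares
$$\begin{CD}
(X^A)^3 @>j_3>> X^3 \\
@Vp_{ab}^AVV @VVp_{ab}V \\
(X^A)^2 @>j_2>> X^2
\end{CD}$$
for $(ab) \in \{(12),(23),(13)\}$. Since $j_3$ is the regular closed immersion of the fixed locus of a reductive group action on a smooth variety, these squares are Tor-independent, so base change in equivariant K-theory yields $j_2^* \circ p_{13*} = p_{13*}^A \circ j_3^*$ (and similarly for $p_{12}, p_{23}$). Expanding both sides of $r_a(\mathcal F \star \mathcal F') = r_a(\mathcal F) \star r_a(\mathcal F')$ and applying base change, the two sides differ only by contributions of $\lambda_A^{-1}$ pulled back along $p_{12}^A$ and $p_{23}^A$: one copy attached to the outer (third) factor and one copy attached to the middle (second) factor. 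The projection formula for $p_{13*}^A$ pulls the outer copy out of the pushforward, producing exactly the $(1\boxtimes \lambda_A^{-1})$ factor sitting outside $i^*$ on the left-hand side. The middle copy, on the other hand, is cancelled by the self-intersection formula $i_2^*\, i_{2,*} = (\cdot) \otimes \lambda_A$ applied along the second factor, so the identity drops out.

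The second diagram, for the $K^A(\ZZ(W))$-action on $K^A(\LL(W))$, is proved by an identical manoeuvre with the third copy of $\M(W)$ replaced by the Lagrangian $\LL(W)$; the only change in the argument is that there is no outer $\lambda_A^{-1}$ factor to produce on the $\LL(W)$-side (since the right vertical map is just $i^*$, not $r_a$), and the projection-formula step is correspondingly simpler.

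The main obstacle I anticipate is the careful book-keeping of normal-bundle factors when both $p_{12}$ and $p_{23}$ contribute $\lambda_A^{-1}$ after pullback, and making sure that the Tor-independence hypothesis is genuinely satisfied so that naive base change is legitimate (rather than needing an excess-intersection correction à la Fulton). Both issues are handled by working on the smooth ambient variety $\M(W)$ rather than on $\ZZ(W)$ itself and then restricting, which is the standard device used by Chriss--Ginzburg for exactly this situation.
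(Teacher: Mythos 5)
The paper does not prove this proposition; it is stated and attributed to Chriss--Ginzburg. So the question is whether your blind argument would close the gap, and I think there is a genuine error in the key step for the first diagram.

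The square
$$
\begin{CD}
(\M(W)^A)^3 @>j_3>> \M(W)^3 \\
@V{p_{13}^A}VV @VV{p_{13}}V \\
(\M(W)^A)^2 @>j_2>> \M(W)^2
\end{CD}
$$
is \emph{not} Cartesian: the actual fiber product of $p_{13}$ and $j_2$ is $\M(W)^A \times \M(W) \times \M(W)^A$, with the full middle factor, not $(\M(W)^A)^3$. Since $p_{13}$ is smooth, base change \emph{does} hold for the honest Cartesian square with $\M(W)^A \times \M(W) \times \M(W)^A$ in the upper left corner, but to pass further down to $(\M(W)^A)^3$ you must then restrict along the regular closed immersion $k\colon (\M(W)^A)^3 \hookrightarrow \M(W)^A\times\M(W)\times\M(W)^A$, whose normal bundle is the pullback of $N_2$ (the normal bundle of the middle factor). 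That restriction, made an isomorphism in the localized $K$-group via Thomason and inverted by $\mathcal H \mapsto \lambda_{A,2}^{-1}\otimes k^*\mathcal H$, is precisely what produces the \emph{extra} factor $1\boxtimes\lambda_{A,2}^{-1}\boxtimes 1$ in the localized base change formula
\begin{equation*}
j_2^*\,p_{13*}(\mathcal G) \;=\; p_{13*}^A\bigl(j_3^*\mathcal G \otimes (1\boxtimes\lambda_{A,2}^{-1}\boxtimes 1)\bigr)\,.
\end{equation*}
Your proposal instead asserts the naive base change $j_2^*\circ p_{13*}=p_{13*}^A\circ j_3^*$ and then tries to ``cancel'' the middle $\lambda_A^{-1}$ coming from $r_a(\mathcal F)$ via a self-intersection formula. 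This has the logic backwards: nothing cancels, and indeed cannot, since $p_{13*}^A(j_3^*\mathcal G)$ and $p_{13*}^A(j_3^*\mathcal G\otimes\lambda_{A,2}^{-1})$ are generally different. What actually happens is that the excess-intersection / concentration correction on the left-hand side \emph{produces} the missing $\lambda_{A,2}^{-1}$, which then matches the middle factor contributed by $r_a(\mathcal F)\star r_a(\mathcal F')$. Your final paragraph explicitly anticipates the risk that Tor-independence might fail and an excess correction might be needed, but then dismisses it; that dismissal is where the proof breaks. Once you replace the non-Cartesian square by the correct one and insert the $\lambda_{A,2}^{-1}$ excess factor, the rest of your bookkeeping (projection formula to extract the outer $1\boxtimes\lambda_{A,3}^{-1}$, K\"unneth to factor the normal bundle) is the standard Chriss--Ginzburg argument and does go through; the second diagram is indeed the same computation with the outer copy absent. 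The isomorphism part, via Thomason plus invertibility of $\lambda_A$ after localization, is fine.
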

In both cases, $X= \LL(W)$ or $\ZZ(W)$, the action of $A$ on $X^A$ being trivial, we have $K^A(X^A) \cong K(X^A) \otimes R(A)$. We thus define the evaluation map $\ev_a := \id_{K(X^A)} \otimes \chi_a : K(X^A) \otimes R(A)_a \rightarrow K(X^A) \otimes \C =:K(X^A)_\C$. Explicitly, let $\F \in K^A(X^A)$. $A$ acts on $\F$ fiberwise and $\F$ admits a decomposition $\F = \bigoplus_{\lambda \in \Sp(\F)} \F(\lambda)$ into $A$-weight spaces, from which it follows that
\be \ev_a(\F) = \sum_{\lambda \in \Sp(\F)} \F(\lambda) \otimes \lambda(a) \in K(X^A) \otimes \C\, ,\ee
disregarding the $A$-action on each left tensor factor.

Let $\ch_\bullet : K(\ZZ(W)^A) \rightarrow H_\bullet(\ZZ(W)^A, \Q)$ be the local homological Chern character map with respect to $\ZZ(W)^A \subset \M(W)^A \times \M(W)^A$ and let $\Td_{\M(W)^A} \in H^\bullet(\M(W)^A)$ be the Todd genus of $\M(W)^A$.
\begin{prop}
The \emph{bivariant Riemann-Roch map} 
\be \RR := \left ( 1 \boxtimes \Td_{\M(W)^A}\right ) \cup \ch_\bullet :K(\ZZ(W)^A)\rightarrow H_\bullet(\ZZ(W)^A, \Q) \ee
is compatible with convolution  in the sense that the following two diagrams commute.
$$ 
\begin{CD}
K(\ZZ(W)^A)^{\otimes 2} @>\star>> K(\ZZ(W)^A)\\
@V{\RR \otimes \RR}VV @VV{ \RR}V\\
H_\bullet(\ZZ(W)^A, \Q)^{ \otimes 2} @>\star>> H_\bullet(\ZZ(W)^A, \Q)
\end{CD}
\qquad\qquad
\begin{CD}
K(\ZZ(W)^A) \otimes K(\LL(W)^A)_\Q @>\star>> K(\LL(W)^A)_\Q\\
@V{\RR \otimes \ch_\bullet} VV @V{\cong}V{ \ch_\bullet}V\\
H_\bullet (\ZZ(W)^A, \Q) \otimes H_\bullet(\LL(W)^A, \Q) @>\star>> H_\bullet(\LL(W)^A, \Q)
\end{CD}
$$
\end{prop}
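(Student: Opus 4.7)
The plan is to deduce both commutativity statements from the local Grothendieck--Riemann--Roch theorem, applied to the three projections defining the convolution product. Recall that for a proper morphism $f \colon X \to Y$ between smooth quasi-projective varieties and a closed subvariety $Z \subset X$, one has the local GRR identity
\begin{equation*}
\ch_\bullet(f_* \F) = f_*\!\left(\Td_f \cup \ch_\bullet(\F)\right)
\end{equation*}
for $\F \in K(Z)$, where $\Td_f$ denotes the relative Todd class; and that $\ch_\bullet$ commutes with flat pullback. Both facts are standard in bivariant intersection theory (see Fulton, and \cite{CG} \S 5.11).

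First I would treat the algebra case. Set $X_a = \M(W)^A$ for $a=1,2,3$ and work on the triple product $X_1 \times X_2 \times X_3$ with projections $p_{ab}$. The projections $p_{12}, p_{23}$ are smooth, hence flat, while $p_{13} \colon p_{12}^{-1}(\ZZ(W)^A) \cap p_{23}^{-1}(\ZZ(W)^A) \to X_1 \times X_3$ is proper, with relative Todd class $\Td_{p_{13}} = p_2^* \Td_{X_2}$, where $p_2$ is the projection onto the middle factor. Then, for $\F, \F' \in K(\ZZ(W)^A)$, flat base change and GRR give
\begin{align*}
\ch_\bullet(\F \star \F') &= \ch_\bullet\bigl(p_{13*}(p_{12}^*\F \otimes p_{23}^* \F')\bigr)\\
&= p_{13*}\bigl(p_2^* \Td_{X_2} \cup p_{12}^* \ch_\bullet(\F) \cup p_{23}^* \ch_\bullet(\F')\bigr).
\end{align*}
Multiplying both sides by $1 \boxtimes \Td_{X_3}$ and using the projection formula together with the identities $p_{12}^*(1 \boxtimes \Td_{X_2}) = p_2^* \Td_{X_2}$ and $p_{13}^*(1 \boxtimes \Td_{X_3}) = p_3^* \Td_{X_3} = p_{23}^*(1 \boxtimes \Td_{X_3})$, one reassembles the Todd factors into the defining $1 \boxtimes \Td_{X_3}$ appearing in the two copies of $\RR$ on the right-hand side, yielding $\RR(\F \star \F') = \RR(\F) \star \RR(\F')$.

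The module case is analogous, with $X_3 = \M(W)^A$ and the convolution landing in $K(\LL(W)^A)_\Q$ because $\LL(W)^A \subset \M(W)^A$ is a closed (possibly singular) subvariety preserved by the correspondences. On the module factor one uses only $\ch_\bullet$ (no Todd twist) since the image of convolution is supported in $\LL(W)^A$; the Todd class $1 \boxtimes \Td_{X_3}$ carried by the left $\RR$ factor is exactly what is needed to cancel the relative Todd class of $p_{13}$ after applying the projection formula, leaving $\ch_\bullet(\F \star \F') = \RR(\F) \star \ch_\bullet(\F')$. The main subtlety, and the step to handle with care, is the bookkeeping of Todd classes in the presence of singular subvarieties: this is precisely where the \emph{local} nature of $\ch_\bullet$ as a bivariant Chern character is essential, ensuring that all manipulations take place at the level of Borel--Moore homology of the relevant $\ZZ$'s and $\LL$'s without requiring them to be smooth.
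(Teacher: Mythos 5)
Your proposal spells out, correctly and in detail, the bivariant Grothendieck--Riemann--Roch argument that the paper simply cites to \cite{CG}: the computation $\ch_\bullet(\F\star\F') = p_{13*}\bigl(p_2^*\Td_{\M(W)^A}\cup p_{12}^*\ch_\bullet(\F)\cup p_{23}^*\ch_\bullet(\F')\bigr)$, the projection-formula reassembly of the Todd factors, and the observation that the module factor carries only $\ch_\bullet$ rather than $\RR$. This is essentially the paper's proof of the commutativity of both squares, just unpacked. (A small slip: for the module diagram the relevant ambient setup, as in the paper's preceding lemma, is $X_3=\{\mathrm{pt}\}$ --- not $X_3=\M(W)^A$ --- which is also the clean reason why the Todd twist disappears on that factor, since $\Td_{\{\mathrm{pt}\}}=1$.)

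However, there is a genuine gap: the proposition also asserts, via the $\cong$ decorating the right vertical arrow of the second diagram, that $\ch_\bullet\colon K(\LL(W)^A)_\Q \to H_\bullet(\LL(W)^A,\Q)$ is an \emph{isomorphism}, and your argument never addresses this. This is not automatic from the local GRR machinery --- the local Chern character with supports is in general neither injective nor surjective on an arbitrary closed subvariety. The paper handles it by invoking the fact, due to Nakajima, that $\ch_\bullet$ factors through the rational Chow group $A_\bullet(\LL(W)^A,\Q)\cong K(\LL(W)^A)_\Q$, and that the cycle map $A_\bullet(\LL(W)^A,\Q)\to H_\bullet(\LL(W)^A,\Q)$ is an isomorphism for these specific Lagrangian subvarieties. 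Without this (or an equivalent) input, your argument proves that the square commutes but not that its right side is invertible, and the latter is what is actually used downstream to identify $\mathbf{M}_a$ with $H_\bullet(\LL(W)^A,\C)$.
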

\begin{proof}
The commutativity of the first diagram is the statement of the bivariant Riemann-Roch theorem proven in \cite{CG}. That of the second follows by a similar argument, \emph{mutatis mutandis}. \cite{Nakajima} gives a proof that $\ch_\bullet : K(\LL(W)^A) \stackrel{\sim}{\rightarrow} H_\bullet(\LL(W)^A, \Q)$ is an isomorphism by factoring it through the rational Chow group $A_\bullet(\LL(W)^A, \Q) \cong K(\LL(W)^A)_\Q$ and establishing that the cycle map $A_\bullet(\LL(W)^A, \Q) \stackrel{\sim}{\longrightarrow} H_\bullet(\LL(W)^A, \Q)$ is an isomorphism.
\end{proof}

Combining all these results, we finally get
\begin{prop}
The following hold:
\begin{enumerate}
 \item[i.] there exists an algebra homomorphism $\Psi:\uepslg \rightarrow H_\bullet(\ZZ(W)^A, \C)$;
\item[ii.] ${\bf M}_{a} \cong H_\bullet(\LL(W)^A, \C)$ as $\uepslg$-modules.
\end{enumerate}
\end{prop}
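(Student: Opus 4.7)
The plan is to build $\Psi$ as a composition of algebra homomorphisms obtained from the results just stated, and then to identify $\mathbf M_a$ with $H_\bullet(\LL(W)^A,\C)$ by transporting the $K^A(\ZZ(W))$-module structure along the same chain of maps.

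For assertion \emph{i}, I start from the integral homomorphism $\Phi_W:\uqZlg\to K^{G(W)\times \C^*}(\ZZ(W))/\mathrm{torsion}$ of theorem \ref{thm:restNakajima}. Composition with the restriction of equivariance $K^{G(W)\times\C^*}(\ZZ(W))\to K^A(\ZZ(W))$ induced by $A\hookrightarrow G(W)\times\C^*$ yields an $R(A)$-linear algebra homomorphism into $K^A(\ZZ(W))/\mathrm{torsion}$. Localizing at $\ker\chi_a$ produces a map into $K^A(\ZZ(W))_a$, to which I apply the bivariant localization isomorphism $r_a$; by the first commutative square in the bivariant localization proposition, this is a homomorphism into $K^A(\ZZ(W)^A)_a$ endowed with its convolution product. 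Since $A$ acts trivially on $\ZZ(W)^A$, one has $K^A(\ZZ(W)^A)_a\cong K(\ZZ(W)^A)\otimes R(A)_a$, and the evaluation map $\ev_a$ lands in $K(\ZZ(W)^A)_\C$; it is still an algebra homomorphism because the convolution product factors through the tensor factor $K(\ZZ(W)^A)$. Finally, the first commutative diagram in the bivariant Riemann--Roch proposition shows that $\RR$ is an algebra homomorphism into $H_\bullet(\ZZ(W)^A,\C)$. To descend the total composite to $\uepslg=\uqZlg\otimes_{\C[q,q^{-1}]}\C$, I use that $q\in\uqZlg$ is sent to the class $L(1)\in R(\C^*)\subset R(A)$ whose image under $\chi_a$ is $\varepsilon$: the element $q-\varepsilon$ lies in $\ker\chi_a$ and hence acts as zero after localization at $a$, so the composite factors through the quotient $\uepslg$, giving the desired $\Psi$.

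For assertion \emph{ii}, I apply the analogous chain to the module side. The $R(A)$-module $K^A(\LL(W))$ is a $K^A(\ZZ(W))$-module under convolution; localizing at $a$ and using $i^*:K^A(\LL(W))_a\stackrel{\sim}\to K^A(\LL(W)^A)_a$ (Thomason), the second commutative square in the bivariant localization proposition shows that $i^*$ is equivariant with respect to the $r_a$-transported algebra action. Evaluating at $a$, and noting that $\C_a$ is the residue field of $R(A)_a$, gives
\[
\mathbf M_a \;=\; K^A(\LL(W))\otimes_{R(A)}\C_a \;\cong\; K^A(\LL(W))_a\otimes_{R(A)_a}\C \;\cong\; K(\LL(W)^A)_\C,
\]
and the isomorphism $\ch_\bullet:K(\LL(W)^A)_\C\stackrel{\sim}\to H_\bullet(\LL(W)^A,\C)$ recalled in the Riemann--Roch proposition (factoring through the rational Chow group) transports this to $H_\bullet(\LL(W)^A,\C)$. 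Equivariance of this last isomorphism for the $\uepslg$-action follows from the second commutative square in the Riemann--Roch proposition together with the equivariance already proven for $i^*$ and $\ev_a$.

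The main obstacle that I expect is purely bookkeeping: one must verify at each step that the various compatibilities of convolution with restriction of equivariance, localization, $r_a$, $\ev_a$ and $\RR$ glue into a single commutative diagram relating the action of $\uepslg$ on $\mathbf M_a$ to the action of $H_\bullet(\ZZ(W)^A,\C)$ on $H_\bullet(\LL(W)^A,\C)$. A secondary point is that the integral form of theorem \ref{thm:restNakajima} is essential: only because $\Phi_W$ takes values in the non-localized torsion-free K-group can one safely perform the localization at $\ker\chi_a$ and subsequent specialization $q\mapsto\varepsilon$ that produces the $\uepslg$-action.
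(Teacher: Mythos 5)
Your proof is correct and follows exactly the chain of maps that the paper assembles in the preceding propositions (restriction to $A$, localization at $\ker\chi_a$, the bivariant localization map $r_a$, the evaluation $\ev_a$, and the bivariant Riemann--Roch map $\RR$), which is precisely what the paper's terse ``combining all these results'' refers to. One small slip worth correcting: the element $q-\varepsilon\in\ker\chi_a$ is not annihilated by localization at $\ker\chi_a$ (localization inverts the complement of the prime, it does not kill the prime itself); it is the subsequent evaluation $\ev_a=\id\otimes\chi_a$ that sends $q-\varepsilon$ to zero, so the total composite factors through $\uepslg$ only after that step, not already after localizing.
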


\subsection{Generic standard modules}
\begin{defn}
\label{def:generic}
Given an $I$-graded $\C$-vector space $W$, we shall say that the semisimple element $a \in G(W) \times \C^*$ and, by extension, that the reductive abelian group $A$ it generates is \emph{generic} iff $\M_0(V, W)^A =  \{0\}$ for every $I$-graded vector $\C$-space $V$. We shall refer to the corresponding standard modules ${\bf M}_{a}$ as \emph{generic standard modules}.
\end{defn}
In type $\mathfrak a$, $\mathfrak d$, or $\mathfrak e$, proposition 4.2.2. in \cite{Nakajima} provides a sufficient condition for $a \in A$ to be generic;
\begin{prop}
\label{prop:suffgeneric}
If $\lambda /\lambda' \notin \varepsilon^{\Z^*}$ for every pair $\lambda, \lambda' \in \Sp(W)$, then $a \in A$ is generic.
\end{prop}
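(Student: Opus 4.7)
The plan is to argue by contradiction. Suppose $x \in \M_0(V, W)^A$ is nonzero. By the stratification of Proposition \ref{prop:regular}, $x \in \M_0^{\mathrm{reg}}(V', W)^A$ for some nonzero $V'$ with $\alpha(V') \leq \alpha(V)$, and via the isomorphism $\pi : \pi^{-1}(\M_0^{\mathrm{reg}}(V', W)) \to \M_0^{\mathrm{reg}}(V', W)$ it lifts to an $A$-fixed point $\tilde x \in \M(V', W)[\rho]$ for some $\rho \in \Hom(A, G(V'))$ (Proposition \ref{prop:conndecM}). I would then fix a stable representative $(B, i, j) \in \mu^{-1}(0)^\chi$.

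The next step exploits the hypothesis, which implies both that $\varepsilon$ is not a root of unity (taking $\lambda = \lambda'$) and that the orbits $\varepsilon^\Z \lambda$, for $\lambda \in \Sp(W)$, are pairwise disjoint. Combined with $\Sp(\rho, V') \subseteq q^\Z \Sp(W)$ from Proposition \ref{VWspec}, the $I$-graded subspace $U := \bigoplus_{\lambda \in \Sp(W),\, n \leq 0} V'(q^n \lambda)$ is $B$-invariant, because $B_h$ shifts the $q$-grading down by one, and contained in $\ker j$, because $j$ sends $V'(q^n\lambda)$ into $W(q^{n-1}\lambda)$, which vanishes for $n \leq 0$ by the disjointness of the $q$-orbits. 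Proposition \ref{prop:stab} thus forces $U = 0$, and the same weight-shift argument gives $i_k(W(\lambda)) \subseteq V'_k(q^{-1}\lambda) = 0$, hence $i = 0$. Thereby $V'$ is concentrated on weights $q^n\lambda$ with $n \geq 1$.

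Finally, I would introduce the one-parameter subgroup $\tau : \C^* \to G(V')$ defined by $\tau(t)|_{V'(q^n\lambda)} := t^n \, \id$ for all $\lambda \in \Sp(W)$ and $n \geq 1$. A short computation yields $\tau(t) \cdot (B, 0, j) = (t^{-1} B, 0, t^{-1} j)$, whose limit as $t \to \infty$ is the origin. Hence $(0,0,0)$ lies in the closure of $G(V') \cdot (B, 0, j)$ in $\mu^{-1}(0)$; being itself $G(V')$-fixed, it is the unique closed orbit in that closure, so $\pi([B, 0, j]) = 0 \in \M_0^{\mathrm{reg}}(0, W)$, contradicting $x \in \M_0^{\mathrm{reg}}(V', W)$ with $V' \neq 0$ by the disjointness of the strata. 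The main subtlety is designing $\tau$ so that it simultaneously lies in $G(V')$ and contracts $(B, 0, j)$ to the origin; this is precisely what the preceding confinement of $V'$ to strictly positive $n$ and the vanishing of $i$ enable, and after which the GIT description of $\M_0$ in terms of closed orbits completes the contradiction.
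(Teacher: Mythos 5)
The paper does not actually reproduce a proof of this proposition: it simply cites Proposition~4.2.2 of \cite{Nakajima}. So there is no ``paper proof'' to compare against directly. That said, your argument is correct and complete, and it is a clean way to establish the result from the ingredients the paper has already set up (Propositions~\ref{prop:regular}, \ref{prop:stab}, \ref{prop:conndecM}, \ref{VWspec}).

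The logical structure is sound. Taking $\lambda=\lambda'$ in the hypothesis yields $\varepsilon\notin$ roots of unity, and taking distinct $\lambda,\lambda'$ yields pairwise disjoint $q^{\Z}$-orbits in $\Hom(A,\C^*)$; together these guarantee that $W(q^{m}\lambda)=0$ for all $\lambda\in\Sp(W)$ and $m\neq 0$. Lifting a hypothetical nonzero $A$-fixed point of $\M_0(V,W)$ to a $\chi$-stable representative $(B,i,j)$ over some $\M_0^{\mathrm{reg}}(V',W)$ with $V'\neq 0$, stability plus the weight inclusions of (\ref{eq:inclusions}) force $V'(q^n\lambda)=0$ for $n\leq 0$ and $i=0$, as you argue. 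The one-parameter subgroup $\tau$ then contracts $(B,0,j)$ to the origin, showing that the (necessarily closed, by regularity) orbit of $(B,0,j)$ is $\{0\}$, contradicting triviality of the stabilizer when $V'\neq 0$. One small cosmetic point: $\tau(t)\cdot j$ restricted to $V'(q^n\lambda)$ scales by $t^{-n}$, not uniformly by $t^{-1}$; your formula $\tau(t)\cdot(B,0,j)=(t^{-1}B,0,t^{-1}j)$ is nonetheless correct because, by the weight bookkeeping just established, $j$ is supported only on the $n=1$ weight spaces. It would be worth saying this explicitly (or simply observing that every component scales by a strictly negative power of $t$, so the limit as $t\to\infty$ is $0$). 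With that clarification, the proof stands as a self-contained substitute for the reference to \cite{Nakajima}.
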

It is obvious that,
\begin{prop}
\label{prop:generic}
Whenever $a$ is generic, $\LL(V, W)^A \cong \M(V,W)^A$ is a nonsingular projective variety. 
\end{prop}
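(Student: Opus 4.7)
The plan is to first identify $\LL(V,W)^A$ with $\M(V,W)^A$ via a simple containment argument exploiting genericity, and then to check nonsingularity and projectivity separately.

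The starting point is that the projective morphism $\pi : \M(V,W) \to \M_0(V,W)$ of Proposition \ref{Prop:projmorph} is $A$-equivariant. Indeed, both quotients inherit their $G(W) \times \C^*$-action (and hence their $A$-action) from the one on $M(V,W)$, which commutes with the $G(V)$-action used to form $\M_0$ and $\M$; the construction of $\pi$ is induced by the invariant inclusion $A(\mu^{-1}(0))^{G(V)} \hookrightarrow A^\chi_\bullet$, so it intertwines the $A$-actions. Restriction therefore yields a morphism $\pi^A : \M(V,W)^A \to \M_0(V,W)^A$. By definition \ref{def:generic}, the genericity of $a$ forces $\M_0(V,W)^A = \{0\}$, so $\pi^A$ factors through the single point $0$. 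This means $\M(V,W)^A \subseteq \pi^{-1}(0) = \LL(V,W)$, and combining with $\LL(V,W)^A = \LL(V,W) \cap \M(V,W)^A$ gives the desired identification $\LL(V,W)^A = \M(V,W)^A$.

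Next, I would establish nonsingularity. By Proposition \ref{prop:principalbundle}(ii), $\M(V,W)$ is a nonsingular quasi-projective variety. Since $A$, as the Zariski closure of the cyclic group generated by the semisimple element $a \in G(W)\times\C^*$, is a reductive (even abelian) linear algebraic group, the fixed-point subvariety $\M(V,W)^A$ is nonsingular. This is a standard consequence of Luna's étale slice theorem: at any fixed point $x$, the $A$-action on the tangent space $T_x \M(V,W)$ decomposes into weight spaces, $\M(V,W)^A$ is étale-locally modelled on the trivial weight space $(T_x \M(V,W))^A$, and in particular is smooth of that dimension.

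Finally, projectivity is immediate: $\LL(V,W) = \pi^{-1}(0)$ is the scheme-theoretic fibre of the projective morphism $\pi$, hence a projective variety, and $\LL(V,W)^A$ is a closed $A$-stable subvariety of a projective variety, hence itself projective. Combining this with the previous paragraph, the common variety $\LL(V,W)^A = \M(V,W)^A$ is both nonsingular and projective. The only subtle point, which I would flag but not expect any real obstacle with, is the $A$-equivariance of $\pi$; once this is in place the rest is formal.
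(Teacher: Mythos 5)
Your argument is correct and is exactly the standard chain of reasoning that the paper treats as immediate (it introduces this proposition with ``It is obvious that'' and gives no written proof): $A$-equivariance of $\pi$ plus genericity forces $\M(V,W)^A \subseteq \pi^{-1}(0) = \LL(V,W)$, giving the identification; smoothness of the fixed locus follows since $A$ is linearly reductive and $\M(V,W)$ is nonsingular; and projectivity comes from $\LL(V,W)^A$ being a closed subvariety of the projective fibre $\pi^{-1}(0)$. No gaps.
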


\begin{cor}\label{cor:nonsing}
If $\Sp(W)=\{\lambda_1, \dots, \lambda_l\}$ is a single $q$-segment of length $l$, the numbering being chosen in such a way that $\lambda_i<\lambda_{i+1}$ for all $i\in\range{1}{l}$, we let $W_i:=W(\lambda_i)$ denote the eigenspaces and $A_i:=A|_{W_i}$. Then, for every $l$-tuple of $I$-graded $\C$-vector spaces $(V_1, \dots, V_l)$, we have that $\LL(V_1, W_1)^{A_1} \times \cdots \times \LL(V_l,W_l)^{A_l}$ is a non-singular projective variety.
\end{cor}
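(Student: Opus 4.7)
The plan is to reduce the corollary to Propositions \ref{prop:suffgeneric} and \ref{prop:generic} applied factor by factor, then use that a finite product of non-singular projective varieties is non-singular and projective.

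More precisely, for each $i \in \range{1}{l}$, I would restrict attention to the eigenspace $W_i = W(\lambda_i)$ and the subgroup $A_i = A|_{W_i}$. By construction, the spectrum of $A_i$ acting on $W_i$ consists of the single character $\lambda_i$, so $\Sp(W_i) = \{\lambda_i\}$. In particular, the only ratio $\lambda/\lambda'$ with $\lambda, \lambda' \in \Sp(W_i)$ is $\lambda_i/\lambda_i = 1$, which lies outside $\varepsilon^{\Z^*}$ by the assumption $1 \notin \varepsilon^{\Z}$ underlying the notion of $q$-segment recalled in section \ref{sec:qseg}. Proposition \ref{prop:suffgeneric} therefore applies to each $A_i$ and shows that $a_i := a|_{W_i}$ is generic in the sense of Definition \ref{def:generic}.

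Once $a_i$ is generic, Proposition \ref{prop:generic} gives the isomorphism $\LL(V_i, W_i)^{A_i} \cong \M(V_i, W_i)^{A_i}$ and asserts that this is a non-singular projective variety. The final step is simply to observe that the property of being non-singular and projective is stable under finite cartesian products, so
\beu
\LL(V_1, W_1)^{A_1} \times \cdots \times \LL(V_l, W_l)^{A_l}
\eeu
is itself a non-singular projective variety.

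I do not foresee any genuine obstacle here: the work has already been done in Propositions \ref{prop:suffgeneric} and \ref{prop:generic}, and the only thing to verify is that passing to an individual eigenspace within a single $q$-segment collapses the spectrum to one point and so trivially meets the genericity hypothesis. If anything, the one point deserving explicit mention is the distinction between the original $A$ acting on the full $W$ (which is \emph{not} generic when $l>1$, as the eigenvalues lie in one $\varepsilon^{\Z}$-orbit) and the restrictions $A_i$ on the individual $W_i$ (which are generic), so that genericity is recovered precisely upon factoring along the $q$-segment as in Proposition \ref{prop:qsegdecomp}.
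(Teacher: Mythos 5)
Your proof is correct and is exactly the argument the paper leaves implicit: restricting to a single eigenspace $W_i$ collapses $\Sp(W_i)$ to one character, so the trivial ratio $1\notin\varepsilon^{\Z^*}$ makes Proposition \ref{prop:suffgeneric} apply, Proposition \ref{prop:generic} gives non-singular projectivity of each factor $\LL(V_i,W_i)^{A_i}\cong\M(V_i,W_i)^{A_i}$, and a finite product of non-singular projective varieties is non-singular and projective. Your closing remark correctly identifies the relevant point, namely that genericity fails for $A$ on $W$ when $l>1$ but is recovered for each $A_i$ on $W_i$.
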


\subsection{$\ell$-weight spaces as $\mathrm{U}^0_\varepsilon(\mathrm L\g)$-modules}
We first state three obvious lemmas.
\begin{lem} 
\label{lem:ra}
The following diagram commutes:
$$\begin{CD}
K^A(\M(W)) @>\delta_*>> K^A(\ZZ(W))\\
@V{i^*}VV @VV{r_a}V\\
K^A(\M(W)^A) @>\delta_*>> K^A(\ZZ(W)^A)
\end{CD}$$
\end{lem}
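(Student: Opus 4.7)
The plan is to recognize this commutativity as a direct instance of the excess intersection formula in equivariant K-theory. First I would observe that the diagonal embeddings and the fixed-locus inclusions assemble into the Cartesian square
\[
\begin{CD}
\M(W)^A @>\delta>> \M(W)^A \times \M(W)^A \\
@V{i}VV @VV{i\times i}V \\
\M(W) @>\delta>> \M(W) \times \M(W)
\end{CD}
\]
whose set-theoretic fiber product is clearly $\M(W)^A$ -- a diagonal point $(x,x)$ lies in $\M(W)^A \times \M(W)^A$ iff $x$ is $A$-fixed. The intersection is moreover reduced thanks to the smoothness of $\M(W)^A$ established in Section~\ref{sec:gradedquiver}, and both horizontal arrows are regular closed embeddings of smooth varieties.

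To identify the excess bundle, note that the normal bundle of the top $\delta$ is $T\M(W)$ and that of the bottom $\delta$ is $T\M(W)^A$. The smoothness of the fixed locus provides a split short exact sequence $0 \to T\M(W)^A \to i^* T\M(W) \to N \to 0$, where $N$ is the normal bundle of $\M(W)^A$ in $\M(W)$. The excess bundle is therefore $N$, whose K-theoretic Euler class is precisely $\lambda_A = {\bigwedge}_{-1} N^*$. The excess intersection formula then yields, for every $\mathcal F \in K^A(\M(W))$,
\[
(i \times i)^* \, \delta_* \mathcal F \;=\; \delta_*\bigl( \lambda_A \otimes i^* \mathcal F \bigr).
\]

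To conclude, I would multiply both sides by $1 \boxtimes \lambda_A^{-1}$ and restrict to $\ZZ(W)^A$. Since $\delta_*(\lambda_A \otimes i^* \mathcal F)$ is supported on $\delta(\M(W)^A) \subset \ZZ(W)^A$, the projection formula for the closed embedding $\delta: \M(W)^A \hookrightarrow \M(W)^A \times \M(W)^A$ -- whose pullback sends $1 \boxtimes \lambda_A^{-1}$ to $\lambda_A^{-1}$ -- then gives
\[
r_a(\delta_* \mathcal F) \;=\; (1 \boxtimes \lambda_A^{-1}) \cdot \delta_*\bigl(\lambda_A \otimes i^*\mathcal F\bigr) \;=\; \delta_*\bigl(\lambda_A^{-1} \otimes \lambda_A \otimes i^*\mathcal F\bigr) \;=\; \delta_*(i^*\mathcal F)
\]
in $K^A(\ZZ(W)^A)_a$, which is precisely the commutativity claimed. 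The main delicacy I anticipate is the scheme-theoretic (rather than merely set-theoretic) Cartesianness of the above square, needed to legitimize the excess intersection formula in the equivariant K-theoretic setting; this is not a deep issue given the smoothness of $\M(W)^A$ and the $A$-equivariant local splitting of the normal bundle $N$, but it merits an explicit verification.
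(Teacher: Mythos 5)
Your proof is correct and is close in spirit to the paper's, but it is packaged differently in a way worth noting. The paper does not invoke the excess intersection formula directly; instead it cites a commutative diagram from Chriss--Ginzburg, namely that $\delta_* \circ \lambda_A^{-1} \circ i^* = (\lambda_A^{-1} \boxtimes \lambda_A^{-1}) \circ i^* \circ \delta_*$, and then reaches the stated diagram by observing the algebraic identity $p_1^*\lambda_A \otimes (\lambda_A^{-1} \boxtimes \lambda_A^{-1}) = 1 \boxtimes \lambda_A^{-1}$ and applying the projection formula together with $\delta^*\circ p_1^*=\id$. Your derivation of the excess intersection formula $(i\times i)^*\delta_*\mathcal F = \delta_*(\lambda_A \otimes i^*\mathcal F)$ from the Cartesian square, followed by multiplication by $1\boxtimes\lambda_A^{-1}$ and the projection formula, is equivalent to this: indeed, multiplying the CG diagram's identity by $\lambda_A\boxtimes\lambda_A$ and applying the projection formula recovers exactly your excess formula. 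What your route buys is transparency about the geometric content---it names the clean intersection and the excess bundle $N$ explicitly rather than hiding them behind a cited diagram---at the cost of having to justify that the square is Cartesian in the scheme-theoretic (rather than merely set-theoretic) sense, a point you rightly flag. That justification does go through: $\M(W)^A$ is smooth (being a disjoint union of the smooth graded quiver varieties $\M(V,W)[\rho]$), the $A$-equivariant decomposition $i^*T\M(W) = T\M(W)^A \oplus N$ splits the conormal sequence, and one checks locally in $A$-equivariant coordinates that the diagonal meets $\M(W)^A \times \M(W)^A$ cleanly along $\M(W)^A$, so the fibre product is reduced and the excess formula applies. Both proofs then conclude by the same projection-formula computation.
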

\begin{proof}
It follows from \cite{CG} that
$$\begin{CD}
K^A(\M(W)) @>\delta_*>> K^A(\ZZ(W))\\
@V{\lambda_A^{-1} \circ i^*}VV @VV{ (\lambda_A^{-1} \boxtimes \lambda_A^{-1}) \circ i^*}V\\
K^A(\M(W)^A) @>\delta_*>> K^A(\ZZ(W)^A)
\end{CD}$$
commutes. Observe furthermore that $p_1^*\lambda_A \otimes (\lambda_A^{-1} \boxtimes \lambda_A^{-1}) = 1 \boxtimes \lambda_A^{-1}$. We thus have
\be r_a \circ \delta_* = (1 \boxtimes \lambda_A^{-1} ) \circ i^* \circ \delta_* = p_1^* \lambda_A \otimes (\lambda_A^{-1} \boxtimes \lambda_A^{-1}) \circ i^* \circ \delta_*
= p_1^* \lambda_A \otimes \delta_* \circ \lambda_A^{-1} \circ i^*\,.\ee
Making use of the projection formula, we get
\be r_a \circ \delta_* = \delta_* (\delta^* \circ p_1^* \lambda_A \otimes \lambda_A^{-1} \circ i^*)\ee
and the result follows since $\delta^* \circ p_1^* = \id $.
\end{proof}

\begin{lem}
\label{lem:RR}
The following diagram commutes:
$$\begin{CD}
K(\M(W)^A) @>\delta_*>> K(\ZZ(W)^A)\\
@V{\ch_\bullet}VV @VV{\RR}V\\
H_\bullet(\M(W)^A) @>\delta_*>> H_\bullet(\ZZ(W)^A)
\end{CD} $$
\end{lem}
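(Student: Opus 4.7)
The key idea is that this diagram is a standard manifestation of the Grothendieck--Riemann--Roch theorem for the closed regular embedding $\delta$, combined with the projection formula used to absorb the $(1 \boxtimes \Td_{\M(W)^A})$ factor built into the definition of $\RR$.

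First, I would observe that $\delta$ factors as $\M(W)^A \hookrightarrow \ZZ(W)^A \hookrightarrow \M(W)^A \times \M(W)^A$; denote the first arrow by $\tilde\delta$. Both $\M(W)^A$ and the ambient $\M(W)^A \times \M(W)^A$ are smooth quasi-projective varieties by proposition \ref{prop:gradquivquot} (applied connected component by connected component, using proposition \ref{prop:conndecM}), so $\tilde\delta$ is a closed regular embedding of smooth varieties, with normal bundle canonically isomorphic to $T\M(W)^A$.

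Next, given a coherent sheaf $\F$ on $\M(W)^A$, I would pick a finite resolution of $\delta_*\F$ by locally free sheaves on $\M(W)^A \times \M(W)^A$ (which exists by smoothness of the ambient variety) and apply Grothendieck--Riemann--Roch for the closed regular embedding $\tilde\delta$ to compute the local homological Chern character. This yields a formula of the shape
\beu \ch_\bullet(\delta_*\F) \;=\; \tilde\delta_*\bigl(\ch(\F)\cap [\M(W)^A]\bigr) \,\cup\, \bigl(\Td_{\M(W)^A}\boxtimes \Td_{\M(W)^A}\bigr)^{-1}, \eeu
where the inverse Todd class of the ambient smooth variety arises because $\ch_\bullet$ is the local Chern character of a complex of vector bundles on $\M(W)^A\times\M(W)^A$, and multiplicativity gives $\Td(\M(W)^A\times \M(W)^A)=\Td_{\M(W)^A}\boxtimes \Td_{\M(W)^A}$. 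The Todd class of the normal bundle $N_{\tilde\delta}\cong T\M(W)^A$ from GRR combines with the remaining diagonal factor.

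Finally, cupping with $(1\boxtimes \Td_{\M(W)^A})$ as in the definition of $\RR$ kills the second tensor factor of the inverse Todd class, and the remaining $(\Td_{\M(W)^A}\boxtimes 1)^{-1}$ can be pulled inside $\tilde\delta_*$ via the projection formula, using $\tilde\delta^*(\alpha\boxtimes 1)=\alpha$ (since $p_1\circ\tilde\delta=\id_{\M(W)^A}$). Combined with the fact that on a smooth variety $\ch_\bullet$ coincides with the Baum--Fulton--MacPherson class $\ch(-)\cap \Td\cap[-]$, this reassembles to $\RR(\delta_*\F)=\delta_*\ch_\bullet(\F)$. The main obstacle is purely bookkeeping with Todd classes and sign conventions; no geometric input is required beyond classical GRR for regular closed embeddings and the projection formula.
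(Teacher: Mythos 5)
Your proof is correct and uses essentially the same approach as the paper: the Riemann--Roch theorem of \cite{CG} for the diagonal embedding (which is what your GRR-for-regular-embeddings computation amounts to), followed by the projection formula and the identity $\delta^* \circ p_1^* = \id$. The paper merely rearranges the $1 \boxtimes \Td_{\M(W)^A}$ factor up front rather than after applying GRR, but the ingredients and the bookkeeping with Todd classes are the same.
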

\begin{proof}
Observe first that $p_1^* \Td_{X^A}^{-1} \cdot ( \Td_{X^A} \boxtimes \Td_{X^A})= 1 \boxtimes \Td_{X^A}$. We thus have
\be \RR  \circ \delta_*=  p_1^* \Td_{X^A}^{-1} \cdot ( \Td_{X^A} \boxtimes \Td_{X^A}) \cup \ch_\bullet  \circ \delta_* \, .\ee
By the Riemann-Roch theorem, \cite{CG}, it follows that
\be \RR  \circ \delta_*=  p_1^* \Td_{X^A}^{-1} \cdot \delta_* (\Td_{X^A} \cup \ch_\bullet )\, .\ee
The projection formula can now be applied, yielding
\be \RR  \circ \delta_* = \delta_* (\delta^* \circ p_1^* \Td_{X^A}^{-1}  \cdot \Td_{X^A} \cup \ch_\bullet) \ee
and the result follows from $\delta^* \circ p_1^* = \id$.
\end{proof}

\begin{lem}
\label{lem:star}
The following diagram commutes:
$$\begin{CD}
H_\bullet(\M(W)^A) \otimes H_\bullet(\LL(W)^A) @>{\delta_* \otimes \id_{H_\bullet(\LL(W)^A)}}>> H_\bullet(\ZZ(W)^A) \otimes H_\bullet(\LL(W)^A)\\
@V\cap VV @VV{\star}V\\
H_\bullet(\LL(W)^A) @= H_\bullet(\LL(W)^A)
\end{CD} $$
\end{lem}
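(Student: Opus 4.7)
The plan is to unwind the convolution product $\star$ in the special case where the third factor is a point. Specializing the setup of section \ref{Sec:Convolution} to $X_1 = X_2 = \M(W)^A$ and $X_3 = \mathrm{pt}$, the triple product collapses to $\M(W)^A \times \M(W)^A$; the projection $p_{12}$ becomes the identity, whereas $p_{13}$ and $p_{23}$ are, respectively, the first and second factor projections $\M(W)^A \times \M(W)^A \to \M(W)^A$. The content of the diagram therefore reduces to the identity
\be\label{eq:starproposal}
(\delta_*\gamma) \star c' \;=\; p_{13*}\bigl(\delta_*\gamma \cap p_{23}^*c'\bigr) \;=\; \gamma \cap c'
\ee
for every $\gamma \in H_\bullet(\M(W)^A)$ and every $c' \in H_\bullet(\LL(W)^A)$.

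The argument then proceeds in two short steps. First, I would invoke the projection formula for the proper closed embedding $\delta : \M(W)^A \hookrightarrow \M(W)^A \times \M(W)^A$ to obtain
\be
\delta_*\gamma \cap p_{23}^*c' \;=\; \delta_*\bigl(\gamma \cap \delta^*p_{23}^*c'\bigr)\,.
\ee
Second, I would exploit the trivial composition identities $p_{23}\circ\delta = \id_{\M(W)^A} = p_{13}\circ\delta$, which immediately yield both $\delta^*p_{23}^*c' = c'$ and $p_{13*}\circ\delta_* = \id$. Plugging these into (\ref{eq:starproposal}) directly produces $\gamma \cap c'$, as required. This is essentially the same $\delta^*\circ p_1^* = \id$ device that underlies lemmas \ref{lem:ra} and \ref{lem:RR} above, applied now to homology rather than to $K$-theory.

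The only substantive point that will need to be clarified is the precise interpretation of the bivariant pullback $p_{23}^*$ and of the intersection product $\cap$ in Borel-Moore homology with supports. Since $\M(W)^A$ is non-singular by proposition \ref{prop:gradquivquot} (and its identification with a disjoint union of the graded quiver varieties $\M^\bullet(V,W)$), Poincaré duality identifies $\gamma$ with an ordinary cohomology class on $\M(W)^A$ and turns $\gamma \cap -$ into the cap with the pullback of that class along the inclusion $\LL(W)^A \hookrightarrow \M(W)^A$. In this setting the projection formula we need is the standard one from \cite{CG}, so this subtlety is purely formal; I expect the only real work to lie in the bookkeeping of supports and refined pullbacks rather than in any genuinely new geometric ingredient.
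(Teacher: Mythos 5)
Your argument coincides with the paper's own proof: the same specialization $X_1=X_2=\M(W)^A$, $X_3=\mathrm{pt}$, the same identifications $p_{12}=\id$, $p_{13}=p_1$, $p_{23}=p_2$, the same application of the projection formula to the diagonal embedding, and the same finish via $p_1\circ\delta = p_2\circ\delta = \id_{\M(W)^A}$. The extra paragraph on the Borel--Moore/Poincar\'e-duality interpretation of the cap product is a fair clarifying remark but does not change the substance.
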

\begin{proof}
In the context of section \ref{Sec:Convolution}, we let $X_1=X_2= \M(W)^A$ and $X_3=$ pt, so that $X_1 \times X_2 \times X_3$ can be identified with $X_1 \times X_2$. Under this identification, we have $p_{13} = p_1$, $p_{23}=p_2$ and $p_{12} = \id_{X_1 \times X_2}$. It is clear then that, for all $u \in H_\bullet(\M(W)^A)$ and $v \in H_\bullet(\LL(W)^A)$,
\be \delta_* u \star v = p_{13 \, *} \left ( p_{12}^* \circ \delta_* u \cap p_{23}^* v \right ) = p_{1 \, *} \left (\delta_* u \cap p_2^* v \right ) = p_{1 \, *} \circ \delta_* \left ( u \cap \delta^* \circ p_2^* v \right )\, ,\ee
where we have used the projection formula. The result eventually follows from $p_1 \circ \delta = p_2 \circ \delta = \id_{\M(W)^A}$.
\end{proof}

\begin{rem}
Strictly speaking, the statement of the above lemma involves pulling back by the inclusion morphism $\LL(W)^A \hookrightarrow \M(W)^A$. However, we shall omit that pull back in order to simplify subsequent formulae.
\end{rem}

\begin{prop}
\label{prop:kaction}
For every $I$-graded $\C$-vector space $V$ and every $\rho \in \Hom(A, G(V))$, we have
\be \kk{\pm}{i}(z)   . v = \ch_\bullet \circ \ev_a \circ i_\rho^* \left ( q^{\rank(\F_i(V, W))}{\bigwedge}_{-1/z}^\pm (q^{-1}-q) \F_i(V, W)\right )\cap v\,,\ee
for all $v \in H_\bullet (\LL(V, W)[\rho], \mathbb C)$.
\end{prop}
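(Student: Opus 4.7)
The plan is to unwind the definition of $\Psi$, step by step apply the compatibility lemmas \ref{lem:ra}, \ref{lem:RR} and \ref{lem:star}, and then restrict the resulting formula to the connected component $\M(V,W)[\rho]$ picked out by $\rho\in\Hom(A,G(V))$. By construction, $\Psi = \RR\circ\ev_a\circ r_a\circ\Phi_W$, so theorem \ref{thm:Nakajima} gives
\be \Psi(\kk{\pm}{i}(z)) \;=\; \RR\circ\ev_a\circ r_a\circ\delta_{*}\!\left(q^{f_i(W)}{\bigwedge}^{\pm}_{-1/z}\bigl((q^{-1}-q)\F_i(W)\bigr)\right)\in H_\bullet(\ZZ(W)^A,\C), \ee
where $\bigwedge^\pm_{-1/z}$ is the $z^{\mp 1}$-expansion corresponding to the two formal series $\kk{\pm}{i}(z)$, and the action on $v\in H_\bullet(\LL(V,W)[\rho],\C)$ is the convolution $\Psi(\kk{\pm}{i}(z))\star v$ in $H_\bullet(\LL(W)^A,\C)\cong{\bf M}_a$.

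First I would push $r_a$ through $\delta_*$ using lemma \ref{lem:ra}, obtaining $r_a\circ\delta_* = \delta_*\circ i^*$. The evaluation $\ev_a$ commutes with $\delta_*$ (since $A$ acts trivially on $\M(W)^A$), so combining these I would rewrite the class inside $\Psi(\kk{\pm}{i}(z))$ as
\be \delta_{*}\circ\ev_a\circ i^{*}\!\left(q^{f_i(W)}{\bigwedge}^{\pm}_{-1/z}\bigl((q^{-1}-q)\F_i(W)\bigr)\right). \ee
Then lemma \ref{lem:RR} lets me commute the bivariant Riemann-Roch $\RR$ with $\delta_*$, producing $\delta_*\circ\ch_\bullet$ on the outside. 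Finally, lemma \ref{lem:star} identifies convolution with a $\delta_*$-pushed class as the cap product with the original class restricted to $\LL(W)^A$, so
\be \Psi(\kk{\pm}{i}(z)).v \;=\; \ch_\bullet\circ\ev_a\circ i^{*}\!\left(q^{f_i(W)}{\bigwedge}^{\pm}_{-1/z}\bigl((q^{-1}-q)\F_i(W)\bigr)\right)\cap v. \ee

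It remains to restrict $i^{*}$ to the component $\M(V,W)[\rho]\subseteq\M(W)^{A}$, i.e.\ to write $i^{*}=i_{\rho}^{*}$ on this component. On $\M(V,W)$, $f_i(W)$ is by definition the scalar operator with eigenvalue $\rank\F_i(V,W)$, and $\F_i(W)|_{\M(V,W)}=\F_i(V,W)$; pulling back by $i_\rho$ therefore yields
\be i_\rho^{*}\!\left(q^{f_i(W)}{\bigwedge}^{\pm}_{-1/z}\bigl((q^{-1}-q)\F_i(W)\bigr)\right) \;=\; q^{\rank\F_i(V,W)}\,{\bigwedge}^{\pm}_{-1/z}\bigl((q^{-1}-q)\,i_\rho^{*}\F_i(V,W)\bigr), \ee
which, inserted in the previous display, gives exactly the desired formula.

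The main (but routine) obstacle is bookkeeping: one must make sure that the $\pm$ labelling of $\bigwedge^{\pm}_{-1/z}$ is preserved through the diagram chases (it is, since lemmas \ref{lem:ra}--\ref{lem:star} are purely functorial and commute with taking expansions in $z^{\mp 1}$), and that the evaluation $\ev_a$ converts the formal $q$ of $R(\C^{*})\subset R(A)$ into the complex scalar $\varepsilon$ consistently both in $q^{\rank\F_i(V,W)}$ and inside $\bigwedge^\pm_{-1/z}$, which follows from the $A$-weight decomposition (\ref{FDecomp}) of $i_\rho^*\F_i(V,W)$ applied termwise to the $\lambda$-operation.
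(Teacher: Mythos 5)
Your proof proposal is correct and follows exactly the route the paper intends: the paper's one-line proof ("It follows immediately from lemmas \ref{lem:ra}, \ref{lem:RR} and \ref{lem:star}.") is precisely the diagram chase you carry out, composing $r_a\circ\delta_* = \delta_*\circ i^*$ (lemma \ref{lem:ra}), $\RR\circ\delta_* = \delta_*\circ\ch_\bullet$ (lemma \ref{lem:RR}), and the cap-product identification $\delta_*u\star v = u\cap v$ (lemma \ref{lem:star}), then restricting to the component $\M(V,W)[\rho]$ where $q^{f_i(W)}$ acts by the scalar $q^{\rank\F_i(V,W)}$ and $\F_i(W)$ restricts to $\F_i(V,W)$. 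The only detail you gloss over is the restriction from $K^{G(W)\times\C^*}$-theory to $K^A$-theory before applying $r_a$, but this is standard and does not affect the argument.
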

\begin{proof}
It follows immediately from lemmas \ref{lem:ra}, \ref{lem:RR} and \ref{lem:star}.
\end{proof}
Setting
\be \label{lweight} k^\pm_i[\rho](z) := \ch_{\dim_\R \LL(V, W)^A} \circ \ev_a \circ i_\rho^*\left ( q^{\rank(\F_i(V, W))}{\bigwedge}_{-1/z}^\pm (q^{-1}-q) \F_i(V, W) \right ) \, ,\ee
for the $\ell$-weight associated with each $\rho \in \Hom(A, G(V))$, it follows that
\begin{cor}
For every $I$-graded $\C$-vector space $V$ and every $\rho \in \Hom(A, G(V))$,
\be H_\bullet (\LL(V, W)[\rho], \C) = \{v \in {\bf M}_{a} : \forall i \in I \, \exists N \in \N \, , \quad (\kk{\pm}{i}(z) - k^\pm_i[\rho](z))^N. v=0 \}\, . \ee
\end{cor}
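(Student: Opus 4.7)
The plan is to read off a scalar-plus-nilpotent decomposition of the $\kk{\pm}{i}(z)$-action on each summand of the direct sum
$$ {\bf M}_a \cong \bigoplus_{[V']}\bigoplus_{\rho' \in \Hom(A, G(V'))} H_\bullet(\LL(V', W)[\rho'], \C) $$
provided by Proposition \ref{prop:conndecM}, with scalar equal to $k_i^\pm[\rho'](z)$ on the summand indexed by $\rho'$. Combined with the injectivity of $[\rho'] \mapsto (k_i^\pm[\rho'](z))_{i \in I}$ on conjugacy classes with $\M(V', W)[\rho'] \neq \emptyset$, this is enough to yield the claimed equality of sets.

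For the inclusion $H_\bullet(\LL(V, W)[\rho], \C) \subseteq \{v \in {\bf M}_a : \forall i\,\exists N,\,(\kk{\pm}{i}(z) - k_i^\pm[\rho](z))^N.v = 0\}$, I would fix $v \in H_\bullet(\LL(V, W)[\rho], \C)$ and invoke Proposition \ref{prop:kaction}, which expresses $\kk{\pm}{i}(z).v$ as the cap product of $v$ with the homological Chern character of $\ev_a \circ i_\rho^*\big(q^{\rank(\F_i(V, W))}\bigwedge_{-1/z}^\pm (q^{-1}-q)\F_i(V, W)\big)$. Using the smoothness and connectedness of $\M(V, W)[\rho]$ granted by Propositions \ref{prop:conndecM} and \ref{prop:gradquivquot}, I decompose this class according to its homological grading into the top-degree component, which by the very definition (\ref{lweight}) is $k_i^\pm[\rho](z)$ times the fundamental class and therefore acts by multiplication by the scalar $k_i^\pm[\rho](z)$, plus components whose cap products strictly lower the homological degree of $v$. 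On the finite-dimensional space $H_\bullet(\LL(V, W)[\rho], \C)$, the strictly-degree-lowering operators are nilpotent, so $\kk{\pm}{i}(z) - k_i^\pm[\rho](z)$ itself is nilpotent and the inclusion follows.

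For the reverse inclusion, I take $v = \sum_{\rho'} v_{\rho'}$ with $v_{\rho'} \in H_\bullet(\LL(V', W)[\rho'], \C)$ annihilated by some power of every $\kk{\pm}{i}(z) - k_i^\pm[\rho](z)$. The previous paragraph shows that on each summand this operator acts as $(k_i^\pm[\rho'](z) - k_i^\pm[\rho](z))\cdot\id$ plus a nilpotent, hence is invertible whenever $k_i^\pm[\rho'](z) \neq k_i^\pm[\rho](z)$. Injectivity of $[\rho'] \mapsto (k_i^\pm[\rho'](z))_{i \in I}$ on non-empty conjugacy classes then forces $v_{\rho'} = 0$ for $[\rho'] \neq [\rho]$, giving $v = v_\rho \in H_\bullet(\LL(V, W)[\rho], \C)$. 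The injectivity itself is obtained by inspecting (\ref{TautDecomp}) and (\ref{FDecomp}): the coefficients of $k_i^\pm[\rho'](z)$, expanded as a formal series in $z$, encode the multiplicities $(\dim V'_k(\lambda))_{k, \lambda}$ of the $A$-eigenspaces of $V'$ through $\rho'$, and these multiplicities determine $[\rho']$ up to $G(V')$-conjugacy.

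The main obstacle is precisely this injectivity statement; modulo it, the corollary is a formal consequence of Proposition \ref{prop:kaction} and the connected-component decomposition of $\LL(W)^A$.
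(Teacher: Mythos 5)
Your overall strategy is exactly what the paper leaves implicit (it offers no more than ``it follows that'' between Proposition~\ref{prop:kaction}, the definition of $k^\pm_i[\rho](z)$ and the corollary): the cap product by the degree-zero part of the Chern character acts as the scalar $k^\pm_i[\rho](z)$, the strictly positive cohomological degrees lower the homological degree of $v$ and are hence nilpotent on each finite-dimensional $H_\bullet(\LL(V, W)[\rho],\C)$, and the full module separates into the components of Proposition~\ref{prop:conndecM} by their $\ell$-weights.

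The injectivity step is the one place where your argument as written does not hold. You claim that the coefficients of $k^\pm_i[\rho'](z)$ ``encode the multiplicities $(\dim V'_k(\lambda))_{k,\lambda}$''; they do not, because in the factorization of $Q_i/R_i$ displayed just after Proposition~\ref{prop:kaction}, the zeros coming from $\dim V'_{\out(h)}(\lambda)$ (at $\lambda(a)/\varepsilon$) and the poles coming from $\dim V'_i(\lambda)$ (at $\lambda(a)$ and $\lambda(a)/\varepsilon^2$) can cancel, and do cancel in essentially every example of section~7. What the equality $k^\pm_i[\rho_1](z)=k^\pm_i[\rho_2](z)$ for all $i\in I$ actually gives, once one sets $\delta_j(\lambda):=\dim V^{(1)}_j(\lambda)-\dim V^{(2)}_j(\lambda)$ and matches zeros and poles at a transcendental $\varepsilon$, is the system
\begin{equation*}
\sum_{j\sim i}\delta_j(q\lambda)=\delta_i(\lambda)+\delta_i(q^2\lambda)\qquad\text{for all }i\in I\text{ and all }\lambda,
\end{equation*}
where $j\sim i$ runs over the Dynkin neighbours of $i$. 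Taking $\lambda$ $q$-maximal in $\bigcup_j\mathrm{supp}(\delta_j)$, and using that the $q$-segments are finite and totally ordered, the left-hand side and the second term on the right vanish, so $\delta_i(\lambda)=0$; downward induction then forces $\delta=0$. So your conclusion — and the corollary — is correct, but the injectivity needs this extra inductive argument rather than a direct reading off of the multiplicities.
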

\noindent As a consequence, $H_\bullet (\LL(V, W)[\rho], \C)$ gets identified with the $\ell$-weight space associated with the $\ell$-weight $k^\pm_i[\rho](z)$.

\subsection{$\varepsilon,t$-characters}
Under this identification, the highest $\ell$-weight of a given standard module ${\bf M}_a$, where $a=(s, \varepsilon) \in G(W)\times \C^*$ is semisimple, is obtained by taking $V= \{0\}$ and consequently $\rho=0$ in (\ref{lweight}), yielding
\be \chi_a  \left (q^{\rank(\W_i)} {\bigwedge}_{-1/z}^\pm (q^{-1}-q) q^{-1} \W_i\right )= \varepsilon^{\dim(W_i)} \frac{\chi_a({\bigwedge}_{-1/qz}^\pm q^{-1} \W_i)}{\chi_a({\bigwedge}_{-q/z}^\pm q^{-1} \W_i)}  =: \varepsilon^{\deg(P_{a,i})} \left (\frac{P_{a, i}(1/\varepsilon z)}{P_{a,i}(\varepsilon /z)} \right )^\pm\ee
where the arguments of $\chi_a$ should be thought of as $A$-modules and the last equality allows one to identify the Drinfel'd polynomial $\P_a= (P_{a,i})_{i \in I} \in \C[X]_1^{\rank(\g)}$ of ${\bf M}_a$ as
\be P_{a,i}(1/z) = \prod_{\lambda \in \Sp(W)} \left (1- \lambda(a)/\varepsilon z \right )^{\dim W_i(\lambda)}\, .\ee
Therefore, ${\bf M}_a = M(\P_a)$ in the notations of the introduction. 
\begin{rem}
It should be clear at this point that the entire set $\C[X]_1^{\rank(\g)}$ of Drinfel'd polynomials can be obtained by varying $a \in  G(W)\times \C^*$, while keeping $x=0$ fixed. This would actually be true for any $A$-fixed $x \in \M_0^{\mathrm{reg}}(V^0, W)$.
\end{rem}

Similarly, for a general $\ell$-weight, we have (\ref{lweight})
\be k^\pm_i[\rho](z) = \varepsilon^{\deg(Q_i) - \deg(R_i)} \frac{Q_i(1/\varepsilon z)}{R_i(1/\varepsilon z)} \, \frac{R_i(\varepsilon/z)}{Q_i(\varepsilon/z)}\, ,\ee
where
\bea\frac{Q_i(1/z)}{R_i(1/z)} &:=& P_i(1/z) \, \dfrac{\chi_a \left ( \displaystyle  \bigotimes_{\substack{h \in H\\ \inn(h) = i}}  {\bigwedge}^\pm_{-1/qz} \V_{\out(h)}   \right )}{\chi_a \left ( {\bigwedge}^\pm_{-1/z} \V_i \otimes {\bigwedge}^\pm_{-1/q^2 z} \V_i \right ) } \nn\\
&=& P_i(1/z) \, \dfrac{ \displaystyle \prod_{\substack{h \in H\\ \inn(h) = i}} \prod_{\lambda \in \Sp(\rho, V_{\out(h)})}(1- \lambda(a)/\varepsilon z )^{\dim V_{\out(h)}(\lambda)}}{\displaystyle  \prod_{\lambda \in \Sp(\rho, V_i)} ((1- \lambda(a)/z)(1- \lambda(a)/\varepsilon^2 z))^{\dim V_i(\lambda)}}\, .\eea
It is natural then to associate with every such $\ell$-weight, parametrized by the rational functions
\be \frac{Q_i(1/z)}{R_i(1/z)} = \frac{\prod_{k}(1- \alpha_{i,k}/z)}{\prod_{k}(1- \beta_{i,k}/z)} \, ,\ee
the monomial
\be m_\rho:=  \prod_{i \in I} \prod_{k} Y_{i, \alpha_{i,k}} Y^{-1}_{i, \beta_{i,k}} \ee
in the formal variables $(Y_{i, c})_{i \in I, c \in \C^*}$.

For every pair of $I$-garded $\C$-vector spaces $(V, W)$ and every $\rho \in \Hom(A, G(V))$, let $n := \dim_\C (\LL(V, W)[\rho])$ and let
\be{\mathscr P} (\LL(V, W)[\rho], t) := \sum_{k =0}^{2n} t^k \dim H_k(\LL(V, W)[\rho])  \ee
be the Poincar\'e polynomial of $\LL(V, W)[\rho]$. Following \cite{NakqtAxiom}, we make the natural
\begin{defn} For every ${\bf P} \in  \C[X]_1^{\rank(\g)}$, we define the $\varepsilon,t$-character of the standard module $M({\bf P})$ by
\be \chi_{\varepsilon,t}(M({\bf P})) := \sum_{[V]} \sum_{\rho \in \Hom(A, G(V))} {\mathscr P} (\LL(V, W)[\rho], t) \,\, m_\rho \in \Z[t][Y^{\pm 1}_{i, c}]_{i \in I, c \in \C^*}\,.\ee
\end{defn}

\begin{rem}
\label{rem:conceven}
The Poincar\'e polynomials ${\mathscr P} (\LL(V, W)[\rho], t)$ and, subsequently, the $\varepsilon,t$-characters are indeed polynomials in the variable $t^2$ as $H_\bullet(\LL(V, W)[\rho], \Z)$ is known to be concentrated in even degree \cite{Nakajima}.
\end{rem}

\begin{rem}
Since standard modules constitute a basis of the Grothendieck ring $\GrMod{\uepslg}$ of the category of finite dimensional $\uepslg$-modules, the above definition indeed gives a homomorphism of $\Z[t, t^{-1}]$-modules $\GrMod{\uepslg}\to \Z[t, t^{-1}][Y_{i, c}^{\pm1}]_{i \in I, c \in \C^*}$, see \cite{NakqtConj}.
\end{rem}

\section{Proof of the main theorem}
\label{sec:proof}
Let $a = (s, \varepsilon)$ denote a semisimple element of $G(W) \times \mathbb C^*$ such that $1 \notin \varepsilon^\Z$ and let $A$ be the Zariski closure of $a^{\mathbb Z}$. As usual, any $I$-graded $\mathbb C$-vector space $V$ is regarded as an $A$-module through some $\rho \in \Hom(A, G(V))$. Recall that $q \in \Hom (A, \C^*)$ is the composite $q: A \hookrightarrow G(W) \times \C^* \twoheadrightarrow \C^*$, where the second arrow is projection on the second factor.

\subsection{Single $q$-segment case} We first assume that $\Sp(W)$ is a single $q$-segment, in the terminology of section \ref{sec:qseg}. Since $\varepsilon$ is not a root of unity, $\Sp(W)$ is totally ordered. Similarly, for every $\rho \in \Hom(A, G(V))$, $\Sp(\rho, V)$ is a totally ordered finite $q$-segment. Let $\lambda_\rho:= \sup \Sp(\rho, V)$ 
and define
\be I_\rho:= \{i \in I : V_i(\lambda_\rho) \neq \{0\} \} \subseteq I \, .\ee
\begin{prop}
\label{prop:leadingeps}
For every $\rho \in \Hom(A, G(V))$, every $i \in I_\rho$ and every $m \in \mathbb N$, we have
\be i^*_\rho \F_i(V, W)(q^{m}\lambda_{\rho}) = \begin{cases} i^*_\rho\W_i(q \lambda_{\rho}) - i^*_\rho\V_i(\lambda_{\rho}) & \mbox{if } m=0;\\
 i^*_\rho\W_i(q^{1+m}\lambda_{\rho}) & \mbox{otherwise.}\end{cases}\ee
\end{prop}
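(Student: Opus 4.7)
The plan is to establish the identity by a direct unpacking of $\F_i(V, W)$, followed by a straightforward case analysis on $m$.  Concretely, combining the definition of $\F_i$ recalled in Section \ref{sec:quiver} with the weight decomposition \eqref{FDecomp}, we obtain
\[ i^*_\rho\F_i(V, W)(q^m\lambda_\rho) = i^*_\rho\W_i(q^{m+1}\lambda_\rho) - i^*_\rho\V_i(q^m\lambda_\rho) - i^*_\rho\V_i(q^{m+2}\lambda_\rho) + \sum_{\substack{h \in H\\ \inn(h) = i}} i^*_\rho\V_{\out(h)}(q^{m+1}\lambda_\rho), \]
which reduces the proposition to a statement about which $\V$-contributions on the right survive.

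The crucial input will be the defining property of $\lambda_\rho = \sup \Sp(\rho, V)$: since $\Sp(\rho, V)$ is totally ordered and $\lambda_\rho$ is its sup, no weight of the form $q^\ell\lambda_\rho$ with $\ell \geq 1$ can belong to $\Sp(\rho, V)$, so by \eqref{TautDecomp} we have $i^*_\rho\V_k(q^\ell\lambda_\rho) = 0$ for every $k \in I$ and every integer $\ell \geq 1$.

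The two-case analysis then finishes the proof. For $m \geq 1$ the three exponents $m$, $m+1$, $m+2$ are all $\geq 1$, so each $\V$-summand in the expansion vanishes, leaving only $\W_i(q^{m+1}\lambda_\rho)$. For $m = 0$ we still have $2, 1 \geq 1$, hence $\V_i(q^2\lambda_\rho)$ and $\V_{\out(h)}(q\lambda_\rho)$ drop out, while $\V_i(\lambda_\rho)$ remains (and is nonzero by the hypothesis $i \in I_\rho$), yielding $\W_i(q\lambda_\rho) - \V_i(\lambda_\rho)$. The argument is really bookkeeping and I do not anticipate any serious obstacle; the only delicate point is to fix the convention for the total order on $\Sp(\rho, V)$ consistently, so that the vanishing $\V_k(q^\ell\lambda_\rho) = 0$ for $\ell \geq 1$ indeed characterizes the sup.
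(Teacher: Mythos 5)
Your computation is the content the paper compresses into the one-line remark ``immediate from the definitions,'' so you have the same approach and the bookkeeping is right: expanding $\F_i$ and taking the $q^m\lambda_\rho$-weight component gives exactly the four terms you list, and the case analysis for $m=0$ versus $m\geq 1$ is exactly what is needed.

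Your ``delicate point'' about the ordering deserves to be taken seriously, though, because it is not merely a convention to be fixed consistently; as literally written, section \ref{sec:qseg} defines $\lambda_1<\lambda_2$ whenever $\lambda_1\in q^{\N^*}\lambda_2$, so $q^\ell\lambda_\rho<\lambda_\rho$ for $\ell\geq1$, and $\lambda_\rho=\sup\Sp(\rho,V)$ would then only exclude $q^{-\ell}\lambda_\rho$ from $\Sp(\rho,V)$ for $\ell\geq1$ --- the \emph{opposite} vanishing from the one your argument (and the proposition) requires. The property that actually characterizes the intended $\lambda_\rho$, and that you correctly invoke, is that $q^\ell\lambda_\rho\notin\Sp(\rho,V)$ for all $\ell\geq1$: this kills $\V_i(q^m\lambda_\rho)$, $\V_i(q^{m+2}\lambda_\rho)$ and the $\V_{\out(h)}(q^{m+1}\lambda_\rho)$ for $m\geq1$, and kills all but $\V_i(\lambda_\rho)$ for $m=0$, with $\V_i(\lambda_\rho)\neq0$ because $i\in I_\rho$. (One can check in a small type $\mathfrak{a}_1$ example with $\Sp(W)=\{\mu,q^2\mu\}$ and $\Sp(\rho,V)=\{q\mu,q^3\mu\}$ that $\lambda_\rho$ must be $q^3\mu$, not $q\mu$, for the proposition --- and for the leading-$\varepsilon$ extraction in lemma \ref{prop:leading}.\emph{ii} --- to come out right.) So the order in section \ref{sec:qseg} should be read with $\lambda_1<\lambda_2$ iff $\lambda_2\in q^{\N^*}\lambda_1$, or equivalently $\lambda_\rho$ should be the $\inf$; with that reading your proof goes through verbatim.
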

\begin{proof}
It is immediate from the definitions of $i^*_\rho \F_i(V, W)(q^{m}\lambda_{\rho})$ and $\lambda_\rho$.
\end{proof}
Define
\be \xi_{i, \rho}(a):= c_1 \circ \ev_a \circ i_\rho^* \left ( (q-q^{-1})q^{\rank(\F_i(V, W))} \F_i(V, W)\right ) \in H^2(\M(V,W)[\rho], \C)\, .\ee
\begin{lem}\label{prop:leading}  We have :
\begin{enumerate}
\item[i.] for all $v \in H_k (\LL(V, W)[\rho], \mathbb C)$,
\be (\kk{+}{i,1} - k^+_{i,1}[\rho])  . v = \xi_{i, \rho}(a) \cap v \mod \bigoplus_{l \leq k-3}H_{l}(\LL(V, W)[\rho], \mathbb C)\,,\ee
\item[ii.] $\xi_{i, \rho}(a)/\lambda_\rho(a) \in H^2(\M(V,W)[\rho], \Z)[\varepsilon, \varepsilon^{-1}]$ and, indeed, setting $r := \rank (\F_i(V, W))$,
\be  \frac{\xi_{i, \rho}(a)}{\lambda_\rho(a)} =\varepsilon^{1+r}\,\, c_1\circ \det i_\rho^* \V_i(\lambda_\rho)^\vee
\mod \bigoplus_{k \leq r} \varepsilon^k  \, H^2(\M(V,W)[\rho], \Z) \,. \ee
\end{enumerate}
\end{lem}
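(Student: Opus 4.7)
For part \emph{i.}, the starting point is Proposition \ref{prop:kaction}, which gives the action of $\kk{+}{i}(z)$ on $v \in H_k(\LL(V, W)[\rho], \C)$ as cap product with $\ch_\bullet \circ \ev_a \circ i_\rho^*(q^{r}\bigwedge^+_{-1/z}((q^{-1}-q)\F_i(V, W)))$, where $r := \rank \F_i(V, W)$. Extracting the coefficient of $z^{-1}$ --- using $\bigwedge_{-1/z}(\mathcal E) = 1 - z^{-1}\mathcal E + O(z^{-2})$ for any virtual class $\mathcal E$ --- yields
$$ \kk{+}{i, 1} \cdot v = \ch_\bullet\!\left(\ev_a \circ i_\rho^*\!\left(q^{r}(q-q^{-1})\F_i(V, W)\right)\right) \cap v. $$
Decomposing $\ch_\bullet = \sum_{j \geq 0} \ch_j$ with $\ch_j \in H^{2j}$, the scalar $(j = 0)$ component acts as multiplication by $k^+_{i, 1}[\rho]$ by the definition (\ref{lweight}) of the $\ell$-weight and cancels the subtracted term; the $(j = 1)$ component is precisely $\xi_{i, \rho}(a) \in H^2$ and contributes $\xi_{i, \rho}(a) \cap v \in H_{k-2}$; and for $j \geq 2$, $\ch_j \cap v \in H_{k - 2j} \subseteq \bigoplus_{l \leq k-4} H_l$, absorbed into the modulus $\bigoplus_{l \leq k-3}H_l(\LL(V, W)[\rho], \C)$. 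This proves \emph{i.}

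For \emph{ii.}, apply the weight decomposition (\ref{FDecomp}) to $i_\rho^* \F_i(V, W)$; evaluation at $a$ replaces the $A$-action on each weight summand by the scalar $\lambda(a)$, yielding
$$ \frac{\xi_{i,\rho}(a)}{\lambda_\rho(a)} = (\varepsilon - \varepsilon^{-1})\varepsilon^{r} \sum_{\lambda \in q^\Z \Sp(W)} \frac{\lambda(a)}{\lambda_\rho(a)}\, c_1\!\left(i_\rho^* \F_i(V, W)(\lambda)\right). $$
In the single-$q$-segment setting, the entire set $q^\Z\Sp(W)$ forms a single orbit $q^\Z \mu_0$ of some $\mu_0 \in \Sp(W)$, so each ratio $\lambda(a)/\lambda_\rho(a)$ is an integer power of $\varepsilon$; since each $c_1(i_\rho^* \F_i(V, W)(\lambda))$ lies in $H^2(\M(V, W)[\rho], \Z)$, the integrality $\xi_{i, \rho}(a)/\lambda_\rho(a) \in H^2(\M(V, W)[\rho], \Z)[\varepsilon, \varepsilon^{-1}]$ is automatic.

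To extract the leading coefficient, parametrize the nonzero weights as $\lambda = q^m\lambda_\rho$ and invoke Proposition \ref{prop:leadingeps}. For $m \geq 1$, $i_\rho^* \F_i(V, W)(q^m\lambda_\rho) = i_\rho^* \W_i(q^{m+1}\lambda_\rho)$ is a trivial bundle with vanishing $c_1$ (the same identity extends to all $i \in I$, not merely $i \in I_\rho$, since $q^m\lambda_\rho$, $q^{m+1}\lambda_\rho$ and $q^{m+2}\lambda_\rho$ all strictly exceed $\sup\Sp(\rho, V) = \lambda_\rho$, killing every $\V$-contribution in the weight decomposition of $\F_i$). For $m = 0$, the same proposition gives $i_\rho^* \F_i(V, W)(\lambda_\rho) = i_\rho^* \W_i(q\lambda_\rho) - i_\rho^* \V_i(\lambda_\rho)$, whose first Chern class is $c_1(\det i_\rho^* \V_i(\lambda_\rho)^\vee)$. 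For $m \leq -1$, the corresponding term is multiplied by $\varepsilon^{r+m}(\varepsilon - \varepsilon^{-1})$, whose highest $\varepsilon$-monomial is $\varepsilon^{r+m+1} \leq \varepsilon^r$, hence entirely absorbed into $\bigoplus_{k \leq r}\varepsilon^k H^2(\M(V, W)[\rho], \Z)$. The sole surviving contribution therefore comes from pairing the $m=0$ summand with the $\varepsilon^{r+1}$ monomial in $(\varepsilon - \varepsilon^{-1})\varepsilon^{r} = \varepsilon^{r+1} - \varepsilon^{r-1}$, giving exactly $\varepsilon^{r+1}c_1(\det i_\rho^* \V_i(\lambda_\rho)^\vee)$ as required.

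The principal technical delicacy is the bookkeeping of $\varepsilon$-powers --- arising simultaneously from the twist $q^r$, the factor $(q - q^{-1})$, and the character ratios $\lambda(a)/\lambda_\rho(a)$ --- together with the observation that the triviality of $\W$ (combined with Proposition \ref{prop:leadingeps}) suffices to kill all contributions with $m > 0$. The single-$q$-segment hypothesis is indispensable here: it is precisely what forces every ratio $\lambda(a)/\lambda_\rho(a)$ into $\varepsilon^\Z$, so that the leading-order analysis can be performed inside a graded $\Z[\varepsilon, \varepsilon^{-1}]$-module.
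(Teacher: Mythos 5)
Your proof is correct and follows the same route as the paper's (very terse) proof: for \emph{i.}, extracting the $z^{-1}$ coefficient from Proposition \ref{prop:kaction} and splitting the Chern character by cohomological degree, and for \emph{ii.}, applying the weight decomposition (\ref{FDecomp}) of $i_\rho^*\F_i(V,W)$ together with Proposition \ref{prop:leadingeps} to isolate the leading $\varepsilon$-monomial, using the triviality of $\W$ to kill the $m\geq 1$ terms. One cosmetic remark: your parenthetical that $q^m\lambda_\rho$ for $m\geq 1$ ``strictly exceeds $\sup\Sp(\rho,V)$'' is the reading needed for Proposition \ref{prop:leadingeps} to hold and is the one you should keep, even though it clashes with the ordering as literally stated in section \ref{sec:qseg} ($\lambda_1<\lambda_2$ iff $\lambda_1\in q^{\N^*}\lambda_2$, under which $q^m\lambda_\rho<\lambda_\rho$); the discrepancy is internal to the paper, not to your argument.
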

\begin{proof}
\emph{i.} follows immediately from proposition \ref{prop:kaction} and \emph{ii.} follows by isolating in $\xi_{i,\rho}(a)$ the leading monomial in $\varepsilon$, making use of proposition \ref{prop:leadingeps}.
\end{proof}
Define $\mathfrak{sl}_2$ as the Lie algebra generated over $\C$ by the triple $(f,h,e)$ subject to the relations
\be [h,e]=2e\,, \qquad  [h,f]=-2f\,, \qquad  [e,f]=h \, , \label{eq:sl2}\ee
and let $\usl 2$ denote its universal enveloping algebra.
\begin{prop}
\label{Prop:sl2single}
If $\varepsilon$ is transcendental over $\Q$, then for every pair of $I$-graded $\C$-vector spaces $(V,W)$, every $\rho \in \Hom(A, G(V))$ and every $k \in I_\rho$, $H_\bullet (\LL(V, W)[\rho], \C)$ admits an $\usl 2$-module structure such that
\be f . v = \xi_{k, \rho}(a) \cap v\, , \qquad \qquad h . v = (m - \dim_\C \LL(V, W)^A) v\, ,\ee
for all $v \in H_{m}(\LL(V, W)[\rho], \C)$.
\end{prop}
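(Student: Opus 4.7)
The plan is to obtain the $\usl{2}$-triple as a Lefschetz sl2-triple associated with an ample line bundle on $\M(V,W)[\rho]$, transporting the Hard Lefschetz property from the Chern class of that bundle to the full operator $\xi_{k,\rho}(a)\cap$ via the transcendence of $\varepsilon$ over $\Q$.

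First, I would note that, by lemma \ref{lem:Vample}(iv), the invertible sheaf $\det i_\rho^*\V_k(\lambda_\rho)^\vee$ is ample on the smooth quasi-projective variety $\M(V,W)[\rho]$. Since it is in particular $\pi_\bullet$-ample for the projective morphism $\pi_\bullet:\M(V,W)[\rho]\to\M_0(V,W)[\rho]$, combining the Beilinson-Bernstein-Deligne-Gabber decomposition theorem with the relative Hard Lefschetz theorem produces an $\usl{2}$-action on $H_\bullet(\LL(V,W)[\rho],\C)$ whose nilpotent lowering operator $f_0$ is cap product with $c_1(\det i_\rho^*\V_k(\lambda_\rho)^\vee)$ and whose Cartan element acts on $H_m$ by $m-\dim_\C\LL(V,W)^A$, exactly as required by the statement.

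Second, lemma \ref{prop:leading}(ii) expresses $\xi_{k,\rho}(a)$ as $\lambda_\rho(a)$ times a polynomial of degree $r+1$ in $\varepsilon$ (with $r:=\rank\F_k(V,W)$) whose leading coefficient is $c_1(\det i_\rho^*\V_k(\lambda_\rho)^\vee)$ and whose other coefficients lie in $H^2(\M(V,W)[\rho],\Z)$. Consequently, the operator $f:=\xi_{k,\rho}(a)\cap$ on $H_\bullet(\LL(V,W)[\rho],\C)$ is a polynomial in $\varepsilon$ whose coefficients all have $h$-weight $-2$, with leading term $\lambda_\rho(a)\varepsilon^{r+1}f_0$. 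Hard Lefschetz for $f_0$ asserts that for every $j\ge 0$, $f_0^j$ restricts to an isomorphism between the $h$-eigenspaces of weights $\pm j$; the corresponding iterate $f^j$ therefore has determinant equal to a Laurent polynomial in $\varepsilon$ whose leading coefficient is a non-zero rational multiple of $\det f_0^j$. Since $\varepsilon$ is transcendental over $\Q$, that Laurent polynomial cannot vanish at $\varepsilon$, so $f$ itself satisfies Hard Lefschetz with respect to $h$. The existence of a unique raising operator $e$ of $h$-weight $+2$ such that $(e,h,f)$ obeys (\ref{eq:sl2}) then follows from the Jacobson-Morozov theorem applied in the graded setting.

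The main obstacle is the rigorous extraction, from relative Hard Lefschetz, of an $\usl{2}$-action on the \emph{ordinary} homology of $\LL(V,W)[\rho]$ rather than on its intersection homology, given that $\LL(V,W)[\rho]$ is typically singular. This is where the Bia\l ynicki-Birula decomposition of proposition \ref{prop:BBdecomp} plays its role: it exhibits $\LL(V,W)[\rho]$ as a disjoint union of affine bundles over products of the smooth projective quiver varieties of corollary \ref{cor:nonsing}, which, combined with the purity statement of remark \ref{rem:conceven}, identifies ordinary and intersection homology here and ensures that cap product by $c_1(\det i_\rho^*\V_k(\lambda_\rho)^\vee)$ acts diagonally with respect to the decomposition. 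Verifying the compatibility of this identification with the Lefschetz structure is the technical heart of the argument.
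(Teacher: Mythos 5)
Your proposal has the right overall shape and matches the paper's strategy: (1) ampleness from lemma \ref{lem:Vample}.\emph{iv}; (2) hard Lefschetz for the (generally singular, projective) variety $\LL(V,W)[\rho]$, reduced to the intersection homology version; (3) transfer from $c_1(\det i_\rho^*\V_k(\lambda_\rho)^\vee)$ to $\xi_{k,\rho}(a)$ by the leading-coefficient computation of lemma \ref{prop:leading}.\emph{ii.} together with transcendence of $\varepsilon$; (4) completion to an $\mathfrak{sl}_2$-triple. Steps (3) and (4) are sound and essentially identical to the paper's (the paper builds $e$ explicitly rather than by Jacobson--Morozov, but that is immaterial). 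Your invocation of \emph{relative} hard Lefschetz for $\pi_\bullet$ is a needless detour: what is actually used, and what the paper uses, is absolute hard Lefschetz for $IH_\bullet$ of the projective variety $\LL(V,W)[\rho]$, which follows from BBDG applied to the constant map.

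The genuine gap is precisely where you declare the technical heart to lie. The identification $H_\bullet(\LL(V,W)[\rho],\Q)\cong IH_\bullet(\LL(V,W)[\rho],\Q)$ does \emph{not} follow from the Bia\l ynicki-Birula decomposition ``combined with the purity statement of remark \ref{rem:conceven}.'' Parity concentration of $H_\bullet$ is not enough to conclude $H_\bullet = IH_\bullet$ for a singular projective variety. What the paper actually does is compute both sides via matching decompositions: the generalized homology basis formula of Carrell--Goresky for $H_\bullet$, and Kirwan's analog for $IH_\bullet$. Matching the two requires showing that the inclusions $j_l$ (of the product of nonsingular fixed-point components into the attracting cell) and $i_l$ (of the attracting cell into $\LL(V,W)[\rho]$) are \emph{normally nonsingular}, which is what produces the shifts $i_l^!\IC\cong\IC$ and $j_l^*\IC\cong\IC[2p]$ and makes the two formulas coincide. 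That normal nonsingularity argument --- built from proposition \ref{prop:BBdecomp}.\emph{i.}, corollary \ref{cor:nonsing}, and the tubular neighborhood criterion of Goresky--MacPherson --- is the missing ingredient. The auxiliary claim that the cap product with $c_1$ ``acts diagonally with respect to the decomposition'' is also not correct (the $\mathfrak{sl}_2$-action does not respect the Bia\l ynicki-Birula strata), but fortunately it is not needed: once $H_\bullet\cong IH_\bullet$ is established as a graded vector space, one applies hard Lefschetz for $IH_\bullet$ of $\LL(V,W)[\rho]$ directly.
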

\begin{proof}
Let $n:= \dim_\C \LL(V, W)^A$ and $b_m:= \dim_\C H_{m}(\LL(V, W)[\rho], \C)$ for all $m \in \range{0}{2n}$. We start by assuming that the single $q$-segment $\Sp(W)$ has length $1$. Then, by propositions \ref{prop:suffgeneric} and \ref{prop:generic}, $\LL(V, W)^A$ is a projective non-singular variety. It follows from lemma \ref{lem:Vample}.\emph{iv}. and the hard Lefschetz theorem that, for all $k \in I_\rho$ and every $m \in \range{0}{n}$,
\be \left (c_1 \circ \det i_\rho^* \V_k (\lambda_\rho)^\vee\right )^{\cap m} : H_{n+m}(\LL(V, W)[\rho], \Q) \stackrel{\sim}{\longrightarrow} H_{n - m}(\LL(V, W)[\rho], \Q) \label{map}\ee 
is an isomorphism. If, on the other hand, the $q$-segment $\Sp(W)=\{\lambda_1, \dots , \lambda_l\}$ has length $l>1$, we let  $W_i:=W(\lambda_i)$ denote the corresponding eigenspaces. Note that it suffices to consider the case
\be\LL(V,W)[\rho] \subset \M(V,W)[\rho] \label{eq:LstrictM}\ee
where the inclusion is strict; for otherwise, $\LL(V,W)[\rho] \cong \M(V,W)[\rho]$ is a nonsingular projective variety and (\ref{map}) again follows by the hard Lefschetz theorem. By proposition \ref{prop:BBdecomp}\emph{ii.}, the (possibly singular) projective variety $\LL(V,W)[\rho]$ admits a Bia\l ynicki-Birula type decomposition, given by eq. (\ref{Eq:LBBdecomp}), which is good. Thus, applying the generalized homology basis formula of \cite{CarrellGoresky}, we get
\bea  && H_\bullet(\LL(V, W)[\rho], \Q) = \label{eq:hdecomp}\\
&& \qquad  \bigoplus_{\substack{[V_1], \dots, [V_l]\\V \cong V_1 \oplus \cdots \oplus V_l}} \bigoplus_{\substack{\rho_1\in\Hom(A_1, G(V_1))\\ \vdots \\ \rho_l\in\Hom(A_l, G(V_l))}} H_{\bullet-2p(V_1, W_1; \cdots ; V_l, W_l; \rho_1, \dots,\rho_l)}(\LL(V_1, W_1)[\rho_1] \times \cdots \times \LL(V_l, W_l)[\rho_l] , \Q) \, ,\nn\eea
where we have set 
\be p(V_1, W_1; \cdots ; V_l, W_l; \rho_1, \dots, \rho_l):=\rank\left (T\M(V, W)[\rho]|_{\LL(V_1, W_1)[\rho_1] \times \cdots \times \LL(V_l, W_l)[\rho_l]}^+\right ) \, . \label{eq:rank}\ee
Note that $\LL(V, W)[\rho]$ being compact, the above formula holds for Borel-Moore homology as well as for ordinary homology. However, in the case of (\ref{eq:LstrictM}), $\LL(V, W)[\rho]$ is generally singular and the hard Lefschetz theorem does not necessarily apply. Let us therefore consider the middle perversity intersection homology of $\LL(V, W)[\rho]$, $IH_\bullet(\LL(V, W)[\rho], \Q)$. The latter also admits a homology basis formula as a result of \cite{Kirwan}, namely
\bea && IH_\bullet(\LL(V, W)[\rho], \Q) = \label{eq:ihdecomp} \\
&& \qquad \bigoplus_{\substack{[V_1], \dots, [V_l]\\V \cong V_1 \oplus \cdots \oplus V_l}} \bigoplus_{\substack{\rho_1\in\Hom(A_1, G(V_1))\\ \vdots \\ \rho_l\in\Hom(A_l, G(V_l))}} \HH^{-\bullet}(\LL(V_1, W_1)[\rho_1] \times \cdots \times \LL(V_l, W_l)[\rho_l], j_l^* i_l^! \IC_{\LL(V, W)[\rho]})\, , \nn \eea
where $\HH^\bullet$ denotes the hypercohomology functor, $\IC_X \in \mathcal D^b(X)$ is the intersection complex of sheaves associated with the variety $X$ and
\be i_l : \LL(V_1, W_1; \dots ; V_l, W_l)[\rho_1, \dots, \rho_l] \hookrightarrow \LL(V, W)[\rho]\ee
and
\be j_l:\LL(V_1, W_1)[\rho_1] \times \cdots \times \LL(V_l, W_l)[\rho_l]  \hookrightarrow \LL(V_1, W_1; \dots ; V_l, W_l)[\rho_1, \dots, \rho_l]\ee
denote the obvious inclusions. By proposition \ref{prop:BBdecomp}\emph{i.}, $\LL(V_1, W_1; \dots ; V_l, W_l)[\rho_1, \dots, \rho_l]$ is isomorphic to the total space of the restricted fibre bundle $T\M(V,W)[\rho]|_{\LL(V_1, W_1)[\rho_1] \times \cdots \times \LL(V_l, W_l)[\rho_l] }^+$ over the nonsingular basis $\LL(V_1, W_1)[\rho_1] \times \cdots \times \LL(V_l, W_l)[\rho_l]$ -- see corollary \ref{cor:nonsing}. Hence, $\LL(V_1, W_1)[\rho_1] \times \cdots \times \LL(V_l, W_l)[\rho_l]$ can be regarded as the zero section of $T\M(V,W)[\rho]|_{\LL(V_1, W_1)[\rho_1] \times \cdots \times \LL(V_l, W_l)[\rho_l] }^+$ and any inner product on the fibres of the latter allows one to define a tubular neighborhood of $\LL(V_1, W_1)[\rho_1] \times \cdots \times \LL(V_l, W_l)[\rho_l]$ in  $\LL(V_1, W_1; \dots ; V_l, W_l)[\rho_1, \dots, \rho_l]$. It follows  -- see \cite{GM}  -- that $j_l$ is a normally nonsingular inclusion and, consequently, that
\be j_l^*\IC_{\LL(V_1, W_1; \dots ; V_l, W_l)[\rho_1, \dots, \rho_l]} \cong \IC_{\LL(V_1, W_1)[\rho_1] \times \cdots \times \LL(V_l, W_l)[\rho_l]}[2p(V_1, W_1; \cdots ; V_l, W_l; \rho_1, \dots, \rho_l)]\,,\label{eq:jn}\ee
where the rightmost $[\,]$ denotes the shift functor over the derived category of bounded complexes of sheaves. Similarly, $\LL(V_1, W_1; \dots ; V_l, W_l)[\rho_1, \dots, \rho_l]$ admits a tubular neighborhood as a nonsingular subvariety in the nonsingular variety
\be \M(V, W)[\rho] \backslash \bigsqcup_{ \LL(V_1', W_1'; \dots ; V_l', W_l')[\rho_1', \dots, \rho_l']\neq \LL(V_1, W_1; \dots ; V_l, W_l)[\rho_1, \dots, \rho_l]}  \LL(V_1', W_1'; \dots ; V_l', W_l')[\rho_1', \dots, \rho_l']\,. \ee
In other words, there exists some open subset $V$ of the latter which is homeomorphic to some open neighborhood of the zero section of some vector bundle over $\LL(V_1, W_1; \dots ; V_l, W_l)[\rho_1, \dots, \rho_l]$. Now, clearly,
\be \LL(V_1, W_1; \dots ; V_l, W_l)[\rho_1, \dots, \rho_l] = V \cap \LL(V, W)[\rho]\, . \ee
It follows, \cite{GM},  that $i_l$ is a normally nonsingular inclusion as well and, consequently, that
\be i_l^!\IC_{\LL(V,W)[\rho]} \cong \IC_{ \LL(V_1, W_1; \dots ; V_l, W_l)[\rho_1, \dots, \rho_l]}\,.\label{eq:in}\ee
Substituting (\ref{eq:in}) and (\ref{eq:jn}) into eq. (\ref{eq:ihdecomp}) yields
\bea && IH_\bullet(\LL(V, W)[\rho], \Q) =\label{eq:ihdecompfin}\\
&&\qquad  \bigoplus_{\substack{[V_1], \dots, [V_l]\\V \cong V_1 \oplus \cdots \oplus V_l}} \bigoplus_{\substack{\rho_1\in\Hom(A_1, G(V_1))\\ \vdots \\ \rho_l\in\Hom(A_l, G(V_l))}} IH_{\bullet -2p(V_1, W_1; \cdots ; V_l, W_l; \rho_1, \dots, \rho_l)}(\LL(V_1, W_1)[\rho_1] \times \cdots \times \LL(V_l, W_l)[\rho_l],\Q)\, . \nn \eea
By corollary \ref{cor:nonsing}, $\LL(V_1, W_1)[\rho_1] \times \cdots \times \LL(V_l, W_l)[\rho_l]$ is nonsingular and it follows  -- see \eg \cite{KW}  -- that
\be IH_\bullet(\LL(V_1, W_1)[\rho_1] \times \cdots \times \LL(V_l, W_l)[\rho_l],\Q) = H_\bullet(\LL(V_1, W_1)[\rho_1] \times \cdots \times \LL(V_l, W_l)[\rho_l],\Q)\ee
for every $l$-tuple of $I$-graded $\C$-vector spaces such that $V\cong V_1\oplus \cdots \oplus V_l$ and every $l$-tuple of homomorphisms $(\rho_1, \dots , \rho_l) \in \Hom(A_1, G(V_1)) \times \cdots \times \Hom(A_l, G(V_l))$ such that $\rho = \rho_1 \oplus \cdots \oplus \rho_l$. Comparing (\ref{eq:hdecomp}) and (\ref{eq:ihdecompfin}) thus establishes that
\be H_\bullet(\LL(V, W)[\rho], \Q) = IH_\bullet(\LL(V, W)[\rho], \Q) \ee
and (\ref{map}) now follows in the general single $q$-segment case from the hard Lefschetz theorem for the intersection homology of the (possibly singular) projective variety $\LL(V, W)[\rho]$.

Therefore, viewing $(\xi_{k, \rho}(a))^{\cap m}$ as a map from $H_{n+m}(\LL(V, W)[\rho], \C)$ to  $H_{n - m}(\LL(V, W)[\rho], \C)$, it follows from lemma \ref{prop:leading}.\emph{ii.} that there exists a non-zero $P_m \in \Q[X, X^{-1}]$ such that
\be \det (\xi_{k, \rho}(a)^{\cap m}) = \lambda_\rho(a)^{b_{n+m}} P_m(\varepsilon) \, ,\ee
for every $m \in \range{0}{n}$ such that $n+m$ (and therefore $n-m$) be even -- see remark \ref{rem:conceven}. Since $\lambda_\rho(a) \in \C^*$, if $\det (\xi_{k, \rho}(a)^{\cap m})$ were zero, $\varepsilon$ would have to be a root of $P_m$, a contradiction. $\xi_{k, \rho}(a)^{\cap m}$ is thus an isomorphism for all $m \in \range{0}{n}$ as above.

Now let $T_0:= \range{1}{b_{2n}} = \{1\}$~\footnote{Remember that $\LL(V, W)[\rho]$ is connected.} and let $(v_{2n, t, 0})_{t \in T_0}$ be a non-zero vector in $H_{2n}(\LL(V, W)[\rho], \C)$. It is clear that, for each $r \in \range{0}{n}$, the
\be v_{2n, t, r} := (\xi_{k, \rho}(a)^{\cap r}) \cap v_{2n,t, 0}  \in H_{2(n-r)}(\LL(V, W)[\rho], \C)\, , \qquad t \in T_0\, ,\ee
are linearly independent for otherwise $\xi_{k, \rho}(a)^{\cap n}$ would fail to be an isomorphism, thus contradicting the conclusion of the previous paragraph. Let $n_1$ be the smallest positive integer in $\range{\lceil n/2 \rceil}{n}$ such that $(v_{2n, t, n-n_1-1})_{t \in T_0}$ be a basis of $H_{2n_1+2}(\LL(V, W)[\rho], \C)$. If $n_1>\lceil n/2 \rceil -1$, let $T_1:=\range{1}{b_{2n_1} - b_{2n}} = \range{1}{b_{2n_1} - 1}$ and complete $(v_{2n, t, n-n_1})_{t \in T_0}$ to a basis $(v_{2n, t, n-n_1})_{t \in T_0} \sqcup (v_{2n_1, t, 0})_{t \in T_1}$ of $H_{2n_1}(\LL(V, W)[\rho], \C)$. Repeating the above argument a finite number $S$ of times and setting $n_0:=n$, we get a basis $(v_{2n_s, t, r})_{s \in \range{0}{S}, t \in T_s, r \in \range{0}{n_s}}$ of $H_\bullet(\LL(V, W)[\rho], \C)$ in which 
\be \xi_{k, \rho}(a) \cap v_{2n_s, t, m} = v_{2n_s, t, m+1}\, .\ee
It is straightforward to check that setting
\be f.v_{2n_s, t, m} = v_{2n_s, t, m+1}\, , \quad h.v_{2n_s, t, m} = \left [2(n_s-m) -n\right ]v_{2n_s, t, m} \, ,  \quad e.v_{2n_s, t, m} = m(2n_s-n-m+1) v_{2n_s, t, m-1} \, , \nn \ee
provides the required $\usl 2$-module structure.
\end{proof}
\begin{rem}
In the last pragraph of the above proof, we have indeed constructed a decomposition of $H_\bullet(\LL(V, W)[\rho], \C)$ into simple $\mathrm{U}(\mathfrak{sl}_2)$-modules. This decomposition is of course isomorphic to the Lefschetz (primitive) decomposition, which is a classic corollary of the hard Lefschetz theorem -- see \eg \cite{Voisin}. Note that by the above proposition, letting $\mathscr P(\LL(V, W)[\rho],t)$ denote the Poincar\'e polynomial of $\LL(V, W)[\rho]$, $y^{-\dim_\C \LL(V, W)[\rho]}\mathscr P(\LL(V, W)[\rho],y)$ can now be regarded as the character associated with the $\mathrm{U}(\mathfrak{sl}_2)$-module $H_\bullet(\LL(V, W)[\rho], \C)$.
\end{rem}
In the particular case of a single $q$-segment, theorem \ref{thm:main} now follows from the complete reducibility of $H_\bullet(\LL(V, W)[\rho], \C)$ as an $\mathrm{U}(\mathfrak{sl}_2)$-module and from proposition \ref{prop:Jordan} and equation (\ref{filt1}) below. As observed in the introduction, proposition \ref{Prop:sl2single} and hence theorem \ref{thm:main} indeed hold for all but a finite number of algebraic values of $\varepsilon \in \C^*$, corresponding to the roots of the Laurent polynomials $P_m \in \Q[X, X^{-1}]$ appearing in the proof of proposition \ref{Prop:sl2single}.

\subsection{General case} 
We now assume that $\Sp(W)$ decomposes as in (\ref{qseg}). Endow $\mathrm{U}(\mathfrak{sl}_2)$ with the standard comultiplication $\Delta : \usl 2 \rightarrow \usl 2^{\otimes 2}$ defined by
\be \Delta (x) = x \otimes 1 + 1 \otimes x\, , \ee
for all $x \in {\mathfrak{sl}}_2$. With the notations of section \ref{sec:qseg}, we have
\begin{prop}
\label{prop:sl2gen}
If $\varepsilon$ is transcendental over $\Q$, then for every pair of $I$-graded $\C$-vector spaces $(V,W)$, every $\rho \in \Hom(A, G(V))$ and every $k \in I_\rho$, $H_\bullet (\LL(V, W)[\rho], \C)$ admits an $\usl 2$-module structure such that
\be H_\bullet (\LL(V, W)[\rho], \C) \cong  \bigotimes_{\Sigma \in \Sp(W)/\sim} H_\bullet (\LL(V_\Sigma, W_\Sigma)[\rho_\Sigma], \C) \, ,\ee
as $\usl 2$-modules; where, on the right-hand side, the $\usl 2$-module structure is induced, through the comultiplication $\Delta$, by that of proposition \ref{Prop:sl2single} on each tensor factor.
\end{prop}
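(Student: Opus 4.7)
The plan is to obtain the required $\usl 2$-module structure on $H_\bullet(\LL(V,W)[\rho], \C)$ as an external tensor product of the per-$q$-segment structures produced by Proposition \ref{Prop:sl2single}, transported along a Künneth isomorphism. The first step is to intersect the factorization (\ref{eq:MaMSigmarho}) of Proposition \ref{prop:qsegdecomp} with $\LL(V,W)$, yielding an isomorphism of projective varieties $\LL(V,W)[\rho] \cong \prod_{\Sigma \in \Sp(W)/\sim} \LL(V_\Sigma, W_\Sigma)[\rho_\Sigma]$. Since all factors are compact, the Künneth formula with field coefficients gives a canonical isomorphism
\[ H_\bullet(\LL(V,W)[\rho], \C) \;\cong\; \bigotimes_{\Sigma \in \Sp(W)/\sim} H_\bullet(\LL(V_\Sigma, W_\Sigma)[\rho_\Sigma], \C). \]

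Next, for each $\Sigma$, observe that $\Sp(W_\Sigma)=\Sigma$ is by construction a single $q$-segment. Choosing some $k_\Sigma \in I_{\rho_\Sigma}$ for each $\Sigma$ (the hypothesis $k \in I_\rho$ supplies one for the unique $\Sigma_0$ containing $\lambda_\rho$, while for the other $\Sigma$ one picks any element), I would apply Proposition \ref{Prop:sl2single} to each triple $(V_\Sigma, W_\Sigma, \rho_\Sigma)$; the transcendence of $\varepsilon$ over $\Q$ allows one to simultaneously avoid the finitely many exceptional algebraic values arising in the finite collection of such applications. Endowing the tensor product with the standard iterated comultiplication, $x.(v_1 \otimes \cdots \otimes v_r) = \sum_j v_1 \otimes \cdots \otimes (x.v_j) \otimes \cdots \otimes v_r$ for $x \in \mathfrak{sl}_2$, turns it into an $\usl 2$-module, and transporting back along Künneth yields the sought-after structure on $H_\bullet(\LL(V,W)[\rho], \C)$.

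No substantive obstacle is anticipated; the proposition is primarily an assembly of Proposition \ref{prop:qsegdecomp}, Künneth, and Proposition \ref{Prop:sl2single}. The one consistency check worth recording concerns the homological degree shift defining the action of $h$: this is well-defined on the tensor product precisely because $\dim_\C \LL(V,W)^A = \sum_\Sigma \dim_\C \LL(V_\Sigma, W_\Sigma)^{A_\Sigma}$, which follows at once from the product decomposition of the fixed-point loci. As a conceptual sanity check, one may further verify using Proposition \ref{Prop:Comult} together with the Whitney sum formula that the operator $\xi_{k,\rho}(a) \cap -$ itself decomposes additively across the Künneth factors, in harmony with the coproduct of $f$; but since the statement only asserts the existence of some such $\usl 2$-structure, this observation is not strictly needed.
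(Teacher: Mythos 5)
Your proof is correct and takes essentially the same route as the paper's, which invokes propositions~\ref{prop:qsegdecomp} and~\ref{Prop:Comult} together with K\"unneth isomorphisms (stated there in equivariant $K$-theory, rather than directly in singular homology as you do, but the two are interchangeable here). The only point I would push back on is your remark that the additive decomposition of $\xi_{k,\rho}(a)\cap{-}$ across the K\"unneth factors is ``not strictly needed'': while it is true that the literal statement only asserts existence of an isomorphism of $\mathrm{U}(\mathfrak{sl}_2)$-modules, the paper explicitly cites Proposition~\ref{Prop:Comult} in its one-line proof precisely because the identification of the resulting $f$-action with $\xi_{k,\rho}(a)\cap{-}$ on the whole of $H_\bullet(\LL(V,W)[\rho],\C)$ is what the subsequent Proposition~\ref{prop:Jordan} relies on (there, $\xi_{i,\rho}(a)^{\cap r}$ is applied to $\mathrm{U}(\mathfrak{sl}_2)$-highest-weight vectors and one needs $f$ to \emph{be} that operator, not merely be conjugate to it). So it is better regarded as part of the content of this proposition rather than as an optional consistency check.
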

\begin{proof}
This follows directly from propositions \ref{prop:qsegdecomp} and \ref{Prop:Comult} and from the existence of the K\"unneth isomorphisms
\be K^{G(W_1)\times \C^*}(\M(V_1, W_1)) \otimes K^{G(W_2)\times \C^*}(\M(V_2, W_2)) \stackrel{\sim}{\rightarrow} K^{G(W_1) \times G(W_2) \times \C^*}(\M(V_1, W_2)\times \M(V_2, W_2)) \,,\ee
\be K^{G(W_1)\times \C^*}(\LL(V_1, W_1)) \otimes K^{G(W_2)\times \C^*}(\LL(V_2, W_2)) \stackrel{\sim}{\rightarrow} K^{G(W_1) \times G(W_2) \times \C^*}(\LL(V_1, W_2)\times \LL(V_2, W_2)) \,,\ee
given by the corresponding external tensor products, for every quadruple of $I$-graded vector spaces $V_1$, $V_2$, $W_1$ and $W_2$, \cite{Nakajima, NakTens, VV}.
\end{proof}
The relation between the above $\mathrm{U}(\mathfrak{sl}_2)$-module structure and the Jordan filtration of $H_\bullet (\LL(V, W)[\rho], \C)$ by $\kk{+}{i,1} - k^+_{i,1}[\rho]$ arises from the following
\begin{prop}
\label{prop:Jordan}
Under the premises of proposition \ref{prop:sl2gen} -- and \emph{a fortiori} of proposition \ref{Prop:sl2single} --, the Jordan canonical forms of $\kk{+}{i,1} - k^+_{i,1}[\rho]$ and $\xi_{i, \rho}(a)$ -- and hence of $f$ -- over $H_\bullet(\LL(V, W)[\rho], \C)$ are equivalent, for every $i \in I_\rho$.
\end{prop}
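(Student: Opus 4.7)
The plan is to establish that $N := \kk{+}{i,1} - k^+_{i,1}[\rho]$ and $f := \xi_{i,\rho}(a) \cap \cdot$ admit equivalent Jordan canonical forms on $V := H_\bullet(\LL(V, W)[\rho], \C)$ by producing Jordan bases with identical block structure. By Lemma \ref{prop:leading}.i, we write $N = f + L$ where $L$ sends $H_k$ into $\bigoplus_{l \leq k-4} H_l$, so $N$ preserves the degree filtration $F^{\leq m} V := \bigoplus_{k \leq m} H_k$ with associated graded operator equal to $f$. Both $N$ and $f$ are nilpotent, so their Jordan types are partitions of $\dim V$.

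I would first record the easy dominance direction coming from the filtration: for any $v = \sum_k v_k \in \ker N^r$ with highest-degree component $v_{k_0}$, the top-degree part of $N^r v$ equals $f^r v_{k_0}$ and must vanish, forcing $v_{k_0} \in \ker f^r \cap H_{k_0}$. Taking associated gradeds of the induced filtration on $\ker N^r$ gives an injection $\mathrm{gr}(\ker N^r) \hookrightarrow \ker f^r$ and hence $\dim \ker N^r \leq \dim \ker f^r$ for all $r \geq 1$; equivalently the Jordan type of $N$ dominates that of $f$.

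For the reverse direction, I would use the explicit $\usl{2}$-Jordan basis $(v_{2n_s, t, r})_{s, t, r}$ of $f$ constructed in the proof of Proposition \ref{Prop:sl2single} (where the $s$-th $f$-chain, starting at the highest-weight vector $v_{2n_s, t, 0}$, has length $d_s + 1$) to build a corresponding Jordan basis for $N$ inductively on $s$. At each stage $s$ I would modify $v_{2n_s, t, 0}$ to $\tilde v_{2n_s, t, 0} := v_{2n_s, t, 0} + \sum_{s' < s, t', r'} \alpha_{s, t}^{s', t', r'} \tilde v_{2n_{s'}, t', r'}$, choosing the coefficients so that $N^{d_s + 1} \tilde v_{2n_s, t, 0} = 0$. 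Then setting $\tilde v_{2n_s, t, r} := N^r \tilde v_{2n_s, t, 0}$ for $r \in \range{0}{d_s}$ produces Jordan chains of $N$ of length $d_s + 1$. A top-to-bottom degree argument yields linear independence, since the top-degree parts of $\tilde v_{2n_s, t, r}$ recover the original basis $(v_{2n_s, t, r})$, which spans $V$. This gives a Jordan basis for $N$ of the same block structure as $f$, matching the two Jordan types.

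The main obstacle is to verify that the correction equations are always solvable at each inductive step, i.e., that the overflow $N^{d_s + 1} v_{2n_s, t, 0}$, which has vanishing top-degree part $f^{d_s + 1} v_{2n_s, t, 0} = 0$ and hence lies in $\bigoplus_{l \leq 2(n - n_s - 2)} H_l$, can always be written as a linear combination of $\{N^{r'} \tilde v_{2n_{s'}, t', 0} : s' < s,\ r' \leq d_{s'}\}$. I expect this to follow from Hard Lefschetz and the $\usl{2}$-weight decomposition: the degrees in which the overflow lives are precisely those covered by the tails of the previously-constructed chains, and combining the dimension count from the filtration injection with the $\usl{2}$-weight matching should force the overflow into the span of those earlier chain elements, completing the induction.
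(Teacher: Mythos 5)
Your approach is essentially the same as the paper's: modify the highest-weight vectors $v_{s,t,0}$ of the $\usl 2$-/$f$-Jordan basis constructed in the proof of Proposition \ref{Prop:sl2single} by adding corrections drawn from the previously built chains, so as to produce a Jordan basis for $N := \kk{+}{i,1} - k^+_{i,1}[\rho]$ with the same block structure; then conclude linear independence by noting that the top-degree parts recover the original $f$-Jordan basis. Your preliminary ``dominance'' step ($\dim\ker N^r \le \dim\ker f^r$ for all $r$) is correct but unnecessary: once the corrected chains are constructed and shown linearly independent, they already form a full Jordan basis of the right cardinality, so equality of Jordan types follows directly.

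The genuine gap is that you do not actually prove the solvability of the correction equations, which you correctly single out as ``the main obstacle'' but then only say you ``expect'' to follow from hard Lefschetz and the weight decomposition. This is precisely the step the paper carries out by an explicit dimension/weight count, and it is where the $\usl 2$-structure enters irreducibly. Concretely: Lemma \ref{prop:leading}.\emph{i.} together with $f^{s+1}v_{s,t,0}=0$ forces the overflow $N^{s+1}v_{s,t,0}$ to lie in $\bigoplus_{k\le n-s-4}H_k(\LL(V,W)[\rho],\C)$, where $n=\dim_\C\LL(V,W)^A$. Any such degree $k$ has $\usl 2$-weight $k-n\le -(s+4)$, so $H_k$ receives contributions only from irreducible summands $V_{s'}$ with $s'\ge s+4 > s$. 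Since those summands are precisely the ones whose $N$-chains $(w_{s',t',r'})$ have already been constructed by the induction, and since their tails $w_{s',t',r'}$ with $n+s'-2r'\le n-s-4$ span exactly $\bigoplus_{k\le n-s-4}H_k$ (by the same weight count, together with linear independence inherited from the $v_{s',t',r'}$), the overflow is a linear combination of them and the correction is solvable. You should spell out this count — as the paper does at the first inductive step and then iterates — rather than assert it; without it the induction does not close.
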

\begin{proof}
Let $n:= \dim_\C \LL(V, W)^A$. Since $H_\bullet(\LL(V, W)[\rho], \C)$ is an $\mathrm{U}(\mathfrak{sl}_2)$-module by proposition \ref{prop:sl2gen}, it admits a decomposition into simple $\mathrm{U}(\mathfrak{sl}_2)$-modules;
\be H_\bullet(\LL(V, W)[\rho], \C) \cong \bigoplus_{s \in S} V_s^{\oplus m(s)} \, ,\ee
where $S\subseteq \{2p-n:p \in \range{\lceil n/2 \rceil}{n} \}$ is a finite set of dominant weights of $\mathfrak{sl}_2$ and $m(s):= [H_\bullet(\LL(V, W)[\rho], \C) : V_s] \in \N^*$ is the multiplicity of the simple $\mathrm{U}(\mathfrak{sl}_2)$-module $V_s$ with highest weight $s$ as a summand in $H_\bullet(\LL(V, W)[\rho], \C)$. For every $s \in S$ and every $t \in \range{1}{m(s)}$, let $v_{s, t, 0}$ be a highest weight vector in the $t$-th copy of $V_s$ and set
\be v_{s,t,r}:= \xi_{i, \rho}(a)^{\cap r} \cap v_{s,t, 0} \, ,\ee
for every $r \in \range{0}{s}$. Then, $(v_{s,t,r})_{s \in S, t \in \range{1}{m(s)}, r \in \range{0}{s}}$ clearly constitutes a Jordan basis for $\xi_{i, \rho}(a)$ over $H_\bullet(\LL(V, W)[\rho], \C)$.

Now, assume that there exists a set of vectors $(w_{s,t,0})_{s \in S, t \in \range{1}{m(s)}}$ in $H_\bullet(\LL(V, W)[\rho], \C)$ such that
\be\label{condw} w_{s,t,0} = v_{s,t,0} \mod \bigoplus_{k < s+n} H_{k}(\LL(V, W)[\rho], \C) \qquad \mbox{and} \qquad (\kk{+}{i,1} - k^+_{i,1}[\rho])^{s+1}.w_{s,t,0} =0 \, ,\ee
for every $s \in S$ and $t \in \range{1}{m(s)}$. Under this assumption, define
\be\label{defwr} w_{s,t,r} := (\kk{+}{i,1} - k^+_{i,1}[\rho])^{r}.w_{s,t,0}  \, , \ee
for all $r \in \range{0}{s}$. It follows from lemma \ref{prop:leading}.\emph{i.} that
\be w_{s,t,r} = v_{s,t,r} \mod \bigoplus_{k < s+n-2r} H_{k}(\LL(V, W)[\rho], \C)\, .\ee
Since $(v_{s,t,r})_{s \in S, t \in \range{1}{m(s)}, r \in \range{0}{s}}$ is a basis of $H_{k}(\LL(V, W)[\rho], \C)$, so is $(w_{s,t,r})_{s \in S, t \in \range{1}{m(s)}, r \in \range{0}{s}}$. In the latter, by construction, $\kk{+}{i,1} - k^+_{i,1}[\rho]$ is in Jordan canonical form and the result clearly follows.

So it suffices to construct a set $(w_{s,t,0})_{s \in S, t \in \range{1}{m(s)}}$ satisfying (\ref{condw}) as above. To do so, we consider the dominant weights of $S$ in (natural) decreasing order and let first 
\be w_{n, t, 0} := v_{n, t, 0}\,,\ee
for every $t \in \range{1}{m(n)}$. The conditions of (\ref{condw}) clearly hold. Define $(w_{n,t,r})_{t \in \range{1}{m(n)}, r \in \range{0}{n}}$ according to (\ref{defwr}). If it constitutes a basis of $H_\bullet(\LL(V, W)[\rho], \C)$, we are done. Otherwise, $S \backslash \{n\} \neq \emptyset$ and we let $s_1$ denote its maximum (in the natural order). Observe that $(w_{n,t,r})_{t \in \range{1}{m(n)}, r \in \range{(n+s_1)/2+p}{n}}$ is a basis of $\bigoplus_{k \leq n-s_1-2p} H_{k}(\LL(V, W)[\rho], \C)$ for every $p \in \range{1}{(n-s_1)/2}$. Furthermore, since $\xi_{i, \rho}(a)^{\cap s_1+1} \cap v_{s_1,t, 0}=0$, it follows from lemma \ref{prop:leading}.\emph{i.} that
\be (\kk{+}{i,1} - k^+_{i,1}[\rho])^{s_1+1}.v_{s_1,t,0} \in \bigoplus_{k \leq n-s_1-4} H_{k}(\LL(V, W)[\rho], \C) \ee
and therefore that
\be (\kk{+}{i,1} - k^+_{i,1}[\rho])^{s_1+1}.v_{s_1,t,0} = \sum_{\substack{t \in \range{1}{m(n)}\\ r \in \range{(n+s_1)/2+2}{n}}}  a_{t,r} w_{n,t,r}\ee
for some tuple of complex numbers $(a_{t,r})_{t \in \range{1}{m(n)}, r \in \range{(n+s_1)/2+2}{n}}$. Now clearly, the
\be w_{s_1, t, 0} := v_{s_1, t, 0} - \sum_{\substack{t \in \range{1}{m(n)}\\ r \in \range{(n+s_1)/2+2}{n}}}  a_{t,r} w_{n,t,r-s_1-1}\, ,\ee
with $t \in \range{1}{m(s_1)}$, satisfy the conditions of (\ref{condw}). By repeating the above argument a finite number of times for all the weights in $S$, we construct the required set $(w_{s,t,0})_{s \in S, t \in \range{1}{m(s)}}$, which concludes the proof.
\end{proof}
Recall the Jordan filtration (\ref{jordanfiltr}). In the case at hand, this reads $F_\bullet M({\bf P})_\rho = F_\bullet \left (H_\bullet(\LL(V, W)[\rho], \C) \right )$. It is clear from propositions \ref{prop:sl2gen} and \ref{prop:Jordan} that, under the same hypotheses,
\be\label{filt1} F_k M({\bf P})_\rho = M({\bf P})_\rho \cap \ker \left(\kk{+}{i,1} - k^+_{i,1}[\rho]\right )^{k+1} \, ,\ee
for every $\rho \in \Hom(A, G(V))$, every $i \in I_\rho$ and every $k \in \N$. Hence, there exists a $\sigma \in S_{1+\dim_\C \LL(V, W)[\rho]}$ such that the positive integers
\be\dim\left ( F_k M({\bf P})_\rho /F_{k-1} M({\bf P})_\rho \right )= \dim H_{2\sigma(k)}(\LL(V, W)[\rho], \C) \ee
constitute a weakly decreasing sequence as $k$ increases from $0$ to $n_\rho:=\dim_\C \LL(V, W)[\rho]$. It is readily checked that $$\sigma_\rho :  \range{0}{n_\rho}  \ni k \mapsto \begin{cases} 
                                                                                                                 \lfloor n_\rho/2 \rfloor - k/2 \quad  \mbox{for even $k$}\\
														 \lfloor n_\rho/2 \rfloor + \lceil k/2 \rceil  \quad \mbox{for odd $k$}
                                                                                                                \end{cases}
$$ is such a permutation. Theorem \ref{thm:main} follows.

\section{Examples}
We now give a few examples of applications of theorem \ref{thm:main}. Throughout this section, $\varepsilon \in \Cx$ is transcendental over $\Q$. In order to simplify expressions and to save space, we shall write monomials in $\varepsilon,t$-characters with the convention that each factor of $Y_{i,a\varepsilon^n}^m$, where $i \in I$, $a \in \Cx$ and $m,n \in \Z$, is represented as $i_{n}^m $. We shall also present $\varepsilon,t$-characters as graphs whenever this is possible, so as to make the overall weight structure apparent.

 \subsection{Thick fundamental modules}
Fundamental modules being thin in type $\mathfrak a$, we shall look for thick fundamental modules over quantum affine algebras of types $\mathfrak d$ and $\mathfrak e$.
\subsubsection{Second fundamental module of $\uepsh{d}{4}$}
The simplest example arises in type $\mathfrak d_4$. Assuming the following labelling for the nodes of the Dynkin diagram of type $\mathfrak d_4$
\begin{center}
\btp
\node[draw, circle] (1) at (0,0) {$1$};
\node[draw, circle] (2) at (1,0) {$2$};
\node[draw, circle] (3) at (1.5,0.87) {$3$};
\node[draw, circle] (4) at (1.5,-0.87) {$4$};
\draw (1) -- (2);
\draw (2) -- (3);
\draw (2) -- (4);
\etp
\end{center}
the $\varepsilon,t$-character of the second fundamental representation $V_{2,a}$ of $\uepsh{d}{4}$ can be depicted as follows
\begin{center}
\btp
\node (1) at (0,0) {$2_0$};
\node (2) at (0,-2) {$1_1 2_2^{-1} 3_1 4_1$};
\node (3) at (-5,-4) {$1_3^{-1} 3_1 4_1$};
\node (4) at (0,-4) {$1_1 3_3^{-1} 4_1$};
\node (5) at (5,-4) {$1_1 3_1 4_3^{-1}$};
\node (6) at (-5,-6) {$1_3^{-1} 2_2 3_3^{-1} 4_1$};
\node (7) at (0, -6) {$1_3^{-1} 2_2 3_1 4_3^{-1}$};
\node (8) at (5,-6) {$1_1 2_2 3_3^{-1} 4_3^{-1}$};
\node (9) at (-5,-8) {$2_4^{-1} 4_1 4_3$};
\node (10) at (0,-8) {$1_3^{-1} 2_2^2 3_3^{-1} 4_3^{-1}$};
\node (11) at (3, -8) {$2_4^{-1} 3_1 3_3$};
\node (12) at ( 5, -8) {$1_1 1_3 2_4^{-1}$};
\node (13) at (-5,-10) {$4_1 4_5^{-1}$};
\node (14) at (0,-10) {$(1 + t^2)2_2 2_4^{-1}$};
\node(15)  at (3,-10) {$3_1 3_5^{-1}$};
\node (16) at (5,-10) {$1_1 1_5^{-1}$};
\node (17) at (-5,-12) {$2_2 4_3^{-1} 4_5^{-1}$};
\node (18) at (0, -12) {$1_3 2_4^{-2} 3_3 4_3$};
\node (19) at (3, -12) {$2_2 3_3^{-1} 3_5^{-1}$};
\node (20) at (5, -12) {$1_3^{-1} 1_5^{-1} 2_2$};
\node (21) at (-5, -14) {$1_3 2_4^{-1} 3_3 4_5^{-1}$};
\node (22) at (5,-14) {$1_5^{-1} 2_4^{-1} 3_3 4_3$};
\node(23) at (0,-14) {$1_3 2_4^{-1} 3_5^{-1} 4_3$};
\node (24) at (0,-16) {$1_5^{-1} 3_3 4_5^{-1}$};
\node (25) at (5, -16) {$1_5^{-1} 3_5^{-1} 4_3$};
\node (26) at (-5, -16) {$1_3 3_5^{-1} 4_5^{-1}$};
\node (27) at (0, -18) {$1_5^{-1} 2_4 3_5^{-1} 4_5^{-1}$};
\node (28) at (0,-20) {$2_6^{-1}$};
\draw (1) -- (2); 
\draw (2) -- (3); 
\draw (2) -- (4);
\draw (2) -- (5);
\draw (3) -- (6);
\draw (3) -- (7);
\draw (4) -- (6);
\draw (5) -- (7);
\draw (4) -- (8);
\draw (5) -- (8);
\draw (6) -- (9);
\draw (6) -- (10);
\draw (7) -- (10);
\draw (8) -- (10);
\draw (7) -- (11);
\draw (8) -- (12);
\draw (9) -- (13);
\draw (10) -- (14);
\draw (11) -- (15);
\draw (12) -- (16);
\draw (13) -- (17);
\draw (14) -- (18);
\draw (15) -- (19);
\draw (16) -- (20);
\draw(17) -- (21);
\draw(18) -- (21);
\draw (18) -- (22);
\draw (20) -- (22);
\draw (18) -- (23);
\draw (19) -- (23);
\draw (21) -- (24);
\draw (22) -- (24);
\draw (22) -- (25);
\draw (23) -- (25);
\draw (21) -- (26);
\draw (23) -- (26);
\draw (24) -- (27);
\draw (25) -- (27);
\draw (26) -- (27);
\draw (27) -- (28);
\etp
\end{center}
The monomial $2_{2}2_{4}^{-1}$ has prefactor $1+t^2$ from which theorem \ref{thm:main} implies, $V_{2,a}$ being a standard module, that the corresponding $2$-dimensional $\ell$-weight space has a single Jordan chain of length $2$ rather than two Jordan chains of length one which was the only other possibilty. Note that this result fits in well with the intuition one might get from the type $\mathfrak a_1$ result by regarding the subgraph
\begin{center}
\btp
\node (10) at (0,0){$1_3^{-1} 2_2^2 3_3^{-1} 4_3^{-1}$};
\node (14) at (4,0){$(1 + t^2)2_2 2_4^{-1}$};
\node (18) at (8, 0) {$1_3 2_4^{-2} 3_3 4_3$};
\draw (10) -- (14);
\draw (14) -- (18);
\etp
\end{center}
with factors of $Y_{i\neq 2, a}^{\pm 1}$ deleted, as the $\varepsilon,t$-character of the thick standard $\uepsslt$-module $V_{a\varepsilon^2}^{\otimes 2}$. Indeed, for that particular module, the result of theorem \ref{thm:main} admits an alternative proof which consists in decomposing $V_{2,a}$ into modules over the Dynkin diagram subalgebra $\uepsslt^{(2)}$ of $\uepsh{d}{4}$ and invoking the type $\mathfrak a_1$ result of \cite{YZ}. 

\subsubsection{Thick fundamental modules of $\uepsh{e}{6}$}
We label the nodes of the Dynkin diagram of $\mathfrak e_6$ as follows
\begin{center}
\btp
\node[draw, circle] (1) at (0,0) {$1$};
\node[draw, circle] (2) at (1,0) {$2$};
\node[draw, circle] (3) at (2,0) {$3$};
\node[draw, circle] (4) at (3,0) {$4$};
\node[draw, circle] (5) at (4,0) {$5$};
\node[draw, circle] (6) at (2, 1) {$6$};
\draw (1) -- (2);
\draw (2) -- (3);
\draw (3) -- (4);
\draw (4) -- (5);
\draw (3) -- (6);
\etp
\end{center}
In the $\varepsilon,t$-character of the third fundamental module $V_{3,a}$ of $\uepsh{e}{6}$, the monomial
$2_5 2_7^{-1} 4_5 4_7^{-1} 6_5 6_7^{-1}$ appears with prefactor $1+3t^2+3t^4+t^6$. 
According to theorem \ref{thm:main}, the corresponding $8$-dimensional $\ell$-weight space therefore decomposes into a Jordan chain of length $4$ and two Jordan chains of lengths $2$. Again, this fits in well with what one might expect from the type $\mathfrak a_1$ result. Indeed, the graph depicting the $\varepsilon,t$-character of $V_{3,a}$ contains a subgraph
\begin{center}
\btp
\node (1) at (-6,0,0) {$1_6^{-1}2_5^23_6^{-3}4_5^25_6^{-1}6_5^2$};
\node (2) at (0,0,0) {$(1+t^2)2_52_7^{-1}3_6^{-2}4_5^25_6^{-1}6_5^2$};
\node (3) at (6,0,0) {$1_62_7^{-2}3_6^{-1}4_5^25_6^{-1}6_5^2$};
\node (4) at (-6,0,5) {$(1+t^2)1_6^{-1}2_5^23_6^{-2}4_5^25_6^{-1}6_56_7^{-1}$};
\node (5) at (0,0,5) {$(1+t^2)^22_52_7^{-1}3_6^{-1}4_5^25_6^{-1}6_56_7^{-1}$};
\node (6) at (6,0,5) {$(1+t^2)1_62_7^{-2}4_5^25_6^{-1}6_56_7^{-1}$};
\node (7) at (-6,0,10) {$1_6^{-1}2_5^23_6^{-1}4_5^25_6^{-1}6_7^{-2}$};
\node (8) at (0,0,10) {$(1+t^2)2_52_7^{-1}4_5^25_6^{-1}6_7^{-2}$};
\node (9) at (6,0,10) {$1_62_7^{-2}3_64_5^25_6^{-1}6_7^{-2}$};
\node (10) at (-6,-6,0) {$(1+t^2)1_6^{-1}2_5^23_6^{-2}4_54_7^{-1}6_5^2$};
\node (11) at (0, -6,0) {$(1+t^2)^22_52_7^{-1}3_6^{-1}4_54_7^{-1}6_5^2$};
\node (12) at ( 6,-6,0) {$(1+t^2)1_62_7^{-2}4_54_7^{-1}6_5^2$};
\node (13) at (-6,-6,5) {$(1+t^2)^21_6^{-1}2_5^23_6^{-1}4_54_7^{-1}6_56_7^{-1}$};
\node (14) at (0, -6,5) {$(1+t^2)^32_52_7^{-1}4_54_7^{-1}6_56_7^{-1}$};
\node (15) at ( 6,-6,5) {$(1+t^2)^21_62_7^{-2}3_64_54_7^{-1}6_56_7^{-1}$};
\node (16) at (-6,-6,10) {$(1+t^2)1_6^{-1}2_5^24_54_7^{-1}6_7^{-2}$};
\node (17) at (0, -6,10) {$(1+t^2)^22_52_7^{-1}3_64_54_7^{-1}6_7^{-2}$};
\node (18) at ( 6,-6,10) {$(1+t^2)1_62_7^{-2}4_54_7^{-1}6_7^{-2}$};
\node (19) at (-6,-12,0) {$1_6^{-1}2_5^23_6^{-1}4_7^{-2}5_66_5^2$};
\node (20) at (0, -12,0) {$(1+t^2)2_52_7^{-1}4_7^{-2}5_66_5^2$};
\node (21) at ( 6,-12,0) {$1_62_7^{-2}3_64_7^{-2}5_66_5^2$};
\node (22) at (-6,-12,5) {$(1+t^2)1_6^{-1}2_5^24_7^{-2}5_66_56_7^{-1}$};
\node (23) at (0, -12,5) {$(1+t^2)^22_52_7^{-1}3_64_7^{-2}5_66_56_7^{-1}$};
\node (24) at ( 6,-12,5) {$(1+t^2)1_62_7^{-2}3_6^24_7^{-2}5_66_56_7^{-1}$};
\node (25) at (-6,-12,10) {$1_6^{-1}2_5^23_64_7^{-2}5_66_7^{-2}$};
\node (26) at (0, -12,10) {$(1+t^2)2_52_7^{-1}3_6^{2}4_7^{-2}5_66_7^{-2}$};
\node (27) at ( 6,-12,10) {$1_62_7^{-2}3_6^34_7^{-2}5_66_7^{-2}$};
\draw (1) -- (2); 
\draw[dotted] (1) -- (10);
\draw (2) -- (3); 
\draw[dotted] (2) -- (11);
\draw (1) -- (4);
\draw (4) -- (5);
\draw[dotted] (4) -- (13);
\draw (5) -- (6);
\draw (2) -- (5);
\draw (3) -- (6);
\draw[dotted] (3) -- (12);
\draw (4) -- (7);
\draw (5) -- (8);
\draw[dotted] (5) -- (14);
\draw (6) -- (9);
\draw[dotted] (6) -- (15);
\draw (7) -- (8);
\draw[dotted] (7) -- (16);
\draw (8) -- (9);
\draw[dotted] (8) -- (17);
\draw[dotted] (9) -- (18);
\draw (10) -- (11);
\draw (10) -- (13);
\draw[dotted] (10) -- (19);
\draw (11) -- (12);
\draw[dotted] (11) -- (20);
\draw (11) -- (14);
\draw (12) -- (15);
\draw[dotted] (12) -- (21);
\draw (13) -- (14);
\draw (13) -- (16);
\draw[dotted] (13) -- (22);
\draw (14) -- (15);
\draw (14) -- (17);
\draw[dotted] (14) -- (23);
\draw (15) -- (18);
\draw[dotted] (15) -- (24);
\draw (16) -- (17);
\draw[dotted] (16) -- (25);
\draw(17) -- (18);
\draw[dotted] (17) -- (26);
\draw[dotted] (18) -- (27);
\draw (20) -- (21);
\draw (20) --(23);
\draw (19) -- (20);
\draw (19) -- (22);
\draw (22) -- (23);
\draw (22) -- (25);
\draw (23) -- (24);
\draw (21) -- (24);
\draw (23) -- (26);
\draw (24) -- (27);
\draw (25) -- (26);
\draw (26) -- (27);
\etp
\end{center}
which, after deleting factors of $Y_{i\notin \{2,4,6\},a}^{\pm 1}$, can be interpreted as the $\varepsilon,t$-character of the standard module $V_{2,a\varepsilon^5}^{\otimes 2} \otimes V_{4, a\varepsilon^5}^{\otimes 2} \otimes V_{6, a\varepsilon^5}^{\otimes 2}$ over the diagram subalgebra of $\uepsh{e}{6}$ with Dynkin diagram
\begin{center}
\btp
\node[draw, circle] (2) at (1,0) {$2$};
\node[draw, circle] (4) at (3,0) {$4$};
\node[draw, circle] (6) at (2, 1) {$6$};
\etp
\end{center}
Therefore, in the particular case of the $\ell$-weight spaces associated with the monomials appearing in the previous subgraph, theorem \ref{thm:main} admits an alternative proof which consists in decomposing $V_{3,a}$ into modules over that diagram subalgebra and invoking the type $\mathfrak a_1$ result of \cite{YZ}. It should be stressed however that not all cases can be dealt with by restricting to diagram subalgebras and invoking the type $\mathfrak a_1$ result, as the next two examples illustrate.

First in the same $\varepsilon,t$-character as above, the monomial $3_4 3_8^{-1}$ appears with prefactor $1 + 4t^2 + t^4$. 
Therefore, by theorem \ref{thm:main}, the corresponding $6$-dimensional $\ell$-weight space decomposes into a Jordan chain of length $3$ and three Jordan chains of lengths $1$. However, $y^{-2}+4+y^2$ is not the character of any simple $\usl 2$-module nor of any tensor product thereof. Consequently, one should not expect in that case to find an alternative proof of the result of theorem \ref{thm:main} relying only on the type $\mathfrak a_1$ result, in the spirit of the previous two examples. To be more specific, the monomial $3_4 3_8^{-1}$ appears in a subgraph of the form
\begin{center}
\btp
\node (1) at (-1.5,-.75) {$(1+4t^2+t^4)3_43_8^{-1}$};
\node(2) at (-1.5,2.5) {$(1+4t^2+t^4)2_7^{-1}3_43_64_7^{-1}6_7^{-1}$};
\node(3) at (-7,4) {$(1+3t^2+t^4)1_6^{-1}2_53_44_7^{-1}6_7^{-1}$};
\node (4) at (-.5,5) {$(1+3t^2+t^4)2_7^{-1}3_44_55_6^{-1}6_7^{-1}$};
\node (5) at (7, 3.5) {$(1+3t^2+t^4)2_7^{-1}3_44_7^{-1}6_5$};
\node (6) at (-7, 2.75) {$(1+t^2)^21_6^{-1}2_5^23_6^{-1}4_54_7^{-1}6_56_7^{-1}$};
\node (7) at (1.5, .75) {$(1+t^2)^32_52_7^{-1}4_54_7^{-1} 6_56_7^{-1}$};
\node (8) at (7,-.75) {$(1+t^2)^22_52_7^{-1}3_64_54_7^{-1}6_7^{-2}$};
\node (9) at (-7, -1.25) {$(1+t^2)^21_62_7^{-2}3_64_54_7^{-1}6_56_7^{-1}$};
\node (10) at (1.5, 3.5) {$(1+t^2)^22_52_7^{-1}3_6^{-1}4_5^25_6^{-1}6_56_7^{-1}$};
\node (11) at (7, 1.5) {$(1+t^2)^22_52_7^{-1}3_6^{-1}4_54_7^{-1}6_5^2$};
\node (12) at (1.5,-1.75) {$(1+t^2)^22_52_7^{-1}3_64_7^{-2}5_66_56_7^{-1}$};
\node(top) at (0,8) {$1_4^{-1} 1_6^{-1} 2_3 4_3 5_4^{-1} 5_6^{-1} 6_3$};
\node (not1) at (-3.5, 7) {};
\node (not2) at (0,6.5) {};
\node (not3) at (3.5,7) {};
\node (not4) at (-7,5) {};
\node (not5) at (-6,4.75) {};
\node (not6) at (-1.5, 6) {};
\node (not6p) at (1, 6) {};
\node (not7) at (6, 4.25) {};
\node (not7p) at (7, 4.5) {};
\node (not8) at (-7, 1.5) {};
\node (not8p) at (-6, 1.5) {};
\node (not8n) at (-8, 3.5) {};
\node (not8nn) at (-6, 3.5) {};
\node (not9) at (6, .5) {};
\node (not9p) at (7, .5) {};
\node (not9n) at (6, 2.5) {};
\node (not9nn) at (8, 2.5) {};
\node (not10) at (-7, -.25) {};
\node (not10p) at (-6, -.25) {};
\node (not11) at (6, .25) {};
\node (not11p) at (7, .25) {};
\node (not12) at (-8, -2.25) {};
\node (not12p) at (-6, -2.25) {};
\node (not13) at (6, -1.75) {};
\node (not13p) at (8, -1.75) {};
\node(not14) at (.5,-.75) {};
\node(not14p) at (2.5,-.75) {};
\node(not14n) at (1.5,-2.75) {};
\node(not14nn) at (3,-2.75) {};
\node(not15) at (1.5,4.5) {};
\node(not15p) at (2.5,4.5) {};
\node(not15n) at (2.5,2.25) {};
\node(not15nn) at (3.5,2.25) {};
\node (nnot1) at (-3.5, -7) {};
\node (nnot2) at (0,-6.5) {};
\node (nnot3) at (3.5,-7) {};
\node (nnot2n) at (-7, -5) {};
\node (nnot2nn) at (-6, -5) {};
\node (nnot3n) at (-.5, -6) {};
\node (nnot3nn) at (1.5, -6) {};
\node (nnot4n) at (6, -5.75) {};
\node (nnot4nn) at (7, -5.75) {};
\node (-1) at (-1.5, -2.5) {$(1+4t^2+t^4)2_5 3_6^{-1} 3_8^{-1} 4_5 6_5$};
\node (-2) at (-7,-4) {$(1+3t^2+t^4) 1_6 2_7^{-1} 3_8^{-1} 4_5 6_5$};
\node (-3) at (.5,-5) {$(1+3t^2+t^4)2_5 3_8^{-1} 4_7^{-1} 5_6 6_5$};
\node (-4) at (7,-4.75) {$(1+3t^2+t^4)2_5 3_8^{-1} 4_5 6_7^{-1}$};
\node (bot) at (0, -8) {$1_6 1_8 2_9^{-1} 4_9^{-1} 5_6 5_8 6_9^{-1}$};
\draw[dashed] (top) -- (not1) node[midway, right] {$2$}; 
\draw[dashed] (top) -- (not2) node[midway, right] {$4$}; 
\draw[dashed] (top) -- (not3) node[midway, left] {$6$}; 
\draw[dashed] (bot) -- (nnot1) node[midway, right] {$2$}; 
\draw[dashed] (bot) -- (nnot2) node[midway, right] {$4$}; 
\draw[dashed] (bot) -- (nnot3) node[midway, left] {$6$}; 
\draw (1) -- (2);
\draw (3) -- (2);
\draw[dashed] (-2) -- (nnot2n) node[midway, right] {$4$};
\draw[dashed] (-2) -- (nnot2nn) node[midway, right] {$6$};
\draw[dashed] (-3) -- (nnot3n) node[midway, right] {$2$};
\draw[dashed] (-3) -- (nnot3nn) node[midway, right] {$6$};
\draw[dashed] (-4) -- (nnot4n) node[midway, right] {$2$};
\draw[dashed] (-4) -- (nnot4nn) node[midway, right] {$4$};
\draw[dashed] (3) -- (not4) node[midway, right] {$4$};
\draw[dashed] (3) -- (not5) node[midway, right] {$6$};
\draw[dashed] (4) -- (not6) node[midway, right] {$2$};
\draw[dashed] (4) -- (not6p) node[midway, right] {$6$};
\draw[dashed] (5) -- (not7) node[midway, right] {$2$};
\draw[dashed] (5) -- (not7p) node[midway, right] {$4$};
\draw[dashed] (6) -- (not8) node[midway, right] {$4$};
\draw[dashed] (6) -- (not8p) node[midway, right] {$6$};
\draw[dashed] (6) -- (not8n) node[midway, right] {$4$};
\draw[dashed] (6) -- (not8nn) node[midway, right] {$6$};
\draw[dashed] (11) -- (not9) node[midway, right] {$2$};
\draw[dashed] (11) -- (not9p) node[midway, right] {$4$};
\draw[dashed] (11) -- (not9n) node[midway, right] {$2$};
\draw[dashed] (11) -- (not9nn) node[midway, right] {$4$};
\draw[dashed] (12) -- (not14) node[midway, right] {$2$};
\draw[dashed] (12) -- (not14p) node[midway, right] {$6$};
\draw[dashed] (12) -- (not14n) node[midway, right] {$2$};
\draw[dashed] (12) -- (not14nn) node[midway, right] {$6$};
\draw[dashed] (10) -- (not15) node[midway, right] {$2$};
\draw[dashed] (10) -- (not15p) node[midway, right] {$6$};
\draw[dashed] (10) -- (not15n) node[midway, right] {$2$};
\draw[dashed] (10) -- (not15nn) node[midway, right] {$6$};
\draw[dashed] (9) -- (not10) node[midway, right] {$4$};
\draw[dashed] (9) -- (not10p) node[midway, right] {$6$};
\draw[dashed] (9) -- (not12) node[midway, right] {$4$};
\draw[dashed] (9) -- (not12p) node[midway, right] {$6$};
\draw[dashed] (8) -- (not11) node[midway, right] {$2$};
\draw[dashed] (8) -- (not11p) node[midway, right] {$4$};
\draw[dashed] (8) -- (not13) node[midway, right] {$2$};
\draw[dashed] (8) -- (not13p) node[midway, right] {$4$};
\draw (4) -- (10);
\draw (3) -- (6);
\draw (6) -- (7);
\draw (4) --(2);
\draw (5) -- (2);
\draw (7) -- (8);
\draw (7) -- (10);
\draw (7) --(11);
\draw (7) -- (12);
\draw (-3) -- (12);
\draw (5) -- (11);
\draw (8) -- (-4);
\draw (7) -- (9);
\draw (9) -- (-2);
\draw (1) -- (-1);
\draw (-1) -- (-2);
\draw (-1) --(-3);
\draw (-1) -- (-4);
\etp
\end{center}
of the complete graph depicting the $\varepsilon,t$-character of $V_{3,a}$. Although the above subgraph with factors of $Y_{i\notin\{2,3,4,6\}, a}^{\pm 1}$ deleted turns out to depict the $\varepsilon,t$-character of the standard module $V_{2,3}\otimes V_{4,3}\otimes V_{6,3}$ over the $\uepsh{d}{4}$ diagram subalgebra\footnote{Inherited labelling understood for the nodes of the type $\mathfrak d_4$ Dynkin diagram.} of $\uepsh{e}{6}$, it is impossible to conclude anything about the Jordan block structure of the $\ell$-weight space associated with the corresponding monomial from the sole type $\mathfrak a_1$ result. The next example provides a more basic illustration of that fact.

\subsection{Standard modules over $\uepsh{a}{2}$}
In type $\mathfrak a_2$, 
the $\varepsilon,t$-character of the generic standard module $V_{1,a} \otimes V_{1,a}$ can be depicted as follows
\begin{center}
\btp
\node (1) at (0,3) {$1_0^2$};
\node(2) at (0,1) {$(1+t^2)1_01_2^{-1}2_1$};
\node(3) at (-2,-1) {$1_2^{-2} 2_1^2$};
\node(4) at(2,-1) {$(1 + t^2)1_0 2_3^{-1}$};
\node (5) at (0,-3) {$(1 + t^2)1_2^{-1} 2_12_3^{-1}$};
\node (6) at (0, -5) {$2_3^{-2}$};
\draw (1) -- (2);
\draw (2) -- (3);
\draw (2) -- (4);
\draw (3) -- (5);
\draw (4) -- (5);
\draw (5) -- (6);
\etp
\end{center}
The monomials $1_01_2^{-1}2_1$, $1_0 2_3^{-1}$ and $1_2^{-1} 2_12_3^{-1}$ appear with prefactor $1+t^2$. According to theorem \ref{thm:main}, the corresponding $2$-dimensional $\ell$-weight spaces consist of single Jordan chains of length two. In view of the type $\mathfrak a_1$ result, this is expected for $1_01_2^{-1}2_1$ and $1_2^{-1} 2_12_3^{-1}$ since
\begin{center}
\btp
\node (1) at (-5,2) {$1_0^2$};
\node(2) at (-5,0) {$(1+t^2)1_01_2^{-1}2_1$};
\node(3) at (-5,-2) {$1_2^{-2} 2_1^2$};
\node(4) at (5,2) {$1_2^{-2} 2_1^2$};
\node (5) at (5,0) {$(1 + t^2)1_2^{-1} 2_12_3^{-1}$};
\node (6) at (5, -2) {$2_3^{-2}$};
\node (7) at (0,0) {and};
\draw (1) -- (2);
\draw (2) -- (3);
\draw (4) -- (5);
\draw (5) -- (6);
\etp
\end{center}
with respectively $Y_{2,a\varepsilon}$ and $Y_{1, a\varepsilon^2}^{-1}$ deleted can be regarded as the $\varepsilon,t$-characters of the thick standard $\uepsslt$-modules $V_{a}^{\otimes 2}$ and $V_{a\varepsilon}^{\otimes 2}$ respectively. Therefore, for those particular $\ell$-weights, the result of theorem \ref{thm:main} admits another proof which consists in decomposing $V_{1,a} \otimes V_{1,a}$ into modules over the Dynkin diagram subalgebras of $\uepsh{a}{2}$ and invoking the type $\mathfrak a_1$ result of \cite{YZ}.  However, a similar reasoning does not obviously hold for $1_0 2_3^{-1}$ and it is worth emphasizing that the formation of a non-trivial Jordan block in the corresponding $\ell$-weight space is a typically higher rank phenomenon.


Let us eventually consider the $\varepsilon,t$-character of the standard module $V_{1,a} \otimes V_{2, a\varepsilon}$. It can be depicted as follows
\begin{center}
\btp
\node (1) at (0,0) {$1_02_1$};
\node(2) at (-2,-1) {$1_2^{-1}2_1^2$};
\node(3) at (2,-1) {$1_01_22_3^{-1}$};
\node(4) at(-2,-3) {$(1 + t^2)2_12_3^{-1}$};
\node (5) at (2,-3) {$1_01_4^{-1}$};
\node (6) at (-2, -5) {$1_22_3^{-2}$};
\node (7) at (2,-5){$1_2^{-1}1_4^{-1}2_1$};
\node (8) at (0,-6){$1_4^{-1}2_3^{-1}$};
\draw (1) -- (2);
\draw (1) -- (3);
\draw (2) -- (4);
\draw (3) -- (5);
\draw (4) -- (6);
\draw (5) -- (7);
\draw (6) -- (8);
\draw (7) -- (8);
\etp
\end{center}
The monomial $2_12_3^{-1}$ appears with prefactor $(1 + t^2)$. According to theorem \ref{thm:main}, the corresponding $2$-dimensional $\ell$-weight space consists of a single Jordan chain of length two. This is expected since the subdiagram
\begin{center}
\btp
\node(2) at (-2,2) {$1_2^{-1}2_1^2$};
\node(4) at(-2,0) {$(1 + t^2)2_12_3^{-1}$};
\node (6) at (-2, -2) {$1_22_3^{-2}$};
\draw (2) -- (4);
\draw (4) -- (6);
\etp
\end{center}
with $Y_{1, a\varepsilon^2}^{\pm 1}$ deleted can be regarded as the $\varepsilon,t$-character of the thick standard $\uepsslt$-module $V_{a\varepsilon}^{\otimes 2}$. Hence, for that particular module, theorem \ref{thm:main} admits an alternative proof which consists in decomposing $V_{1,a} \otimes V_{2,a\varepsilon}$ into modules over the Dynkin diagram subalgebras of $\uepsh{a}{2}$ and invoking the type $\mathfrak a_1$ result of \cite{YZ}. It should be stressed however that the existence of non-trivial Jordan chains in the tensor product of two fundamental modules forming a non-trivial $q$-segment -- here $\{a, aq\}$ -- is a purely higher rank phenomenon.

\newpage

\begin{thebibliography}{Nak01b}

\bibitem[BB73]{BB}
A.~Bia\l ynicki-Birula, \emph{{Some theorems on actions of algebraic groups}},
Ann. of Math. \textbf{98} (1973), 2nd series, no.~3, 480--497.

\bibitem[BBCM]{BBCMG}
A.~Bia\l ynicki-Birula, J.~B.~Carrell and W.~M.~McGovern, \emph{{Algebraic quotients. Torus Actions and cohomology. The adjoint representation and the adjoint action.}}, in Encyclopedia of Mathematical Sciences, vol. 131, Springer-Verlag (2002).

\bibitem[CG83]{CarrellGoresky}
J.~B.~Carrell and R.~M.~Goresky, \emph{{A decomposition theorem for the integral homology of a variety}},
Invent. math. \textbf{73}, (1983), 367 -- 381.

\bibitem[CG]{CG}
N.~Chriss and V.~Ginzburg, \emph{{Representation theory and complex geometry}},
  Modern Birkhauser Classics, Birkhauser, (1997), 495 p.

\bibitem[CP]{CPbook}
V.~Chari and A.~Pressley, \emph{{A guide to quantum groups}}, Cambridge, UK:
  Univ. Pr. (1994) 651 p.

\bibitem[CP91]{CPsl2}
\bysame, \emph{Quantum affine algebras}, Comm. Math. Phys. \textbf{142} (1991),
  261--283.

\bibitem[FM01]{FM}
E.~Frenkel and E.~Mukhin, \emph{{Combinatorics of q-characters of
  finite-dimensional representations of quantum affine algebras}}, Commun.
  Math. Phys. \textbf{216} (2001), 23--57.

\bibitem[FR98]{FR}
E.~Frenkel and N.~Reshetikhin, \emph{{The q-characters of representations of
  quantum affine algebras and deformations of W-algebras}}, Contemp. Math.
  \textbf{248} ({1998}), 163--205.

\bibitem[GM83]{GM}
M.~Goresky and R.~MacPherson, \emph{{Intersection homology II}}, Invent. Math. \textbf{71}, 77 -- 129.

\bibitem[GV93]{GV}
V.~Ginzburg and E.~Vasserot, \emph{{Langlands reciprocity for affine quantum
  groups of type $A_n$}}, Int. Math. Res. Notices ({1993}), no.~3, 67--85.

\bibitem[Kin94]{King}
A.~D.~King, \emph{{Moduli of representations of finite dimensional algebras}},
  Quart. J. Math. \textbf{45} ({1994}), 515--530.

\bibitem[Kir88]{Kirwan}
F.~Kirwan, \emph{{Intersection homology and torus actions}},
 J. of the Am. Math. Soc. \textbf{1} (1988), n. 2, 385 -- 400.

\bibitem[KW]{KW}
F.~Kirwan and J.~Woolf, \emph{{An introduction to intersection homology theory}}, 2nd ed., Taylor \& Francis group (2006).

\bibitem[Kni95]{Knight}
H.~Knight, \emph{{Spectra of Tensor Products of Finite Dimensional
  Representations of Yangians}}, Journal of Algebra \textbf{174} (1995), no.~1,
  187 -- 196.

\bibitem[Lun73]{Luna}
D.~Luna, \emph{{Slices \'etal\'es}}, M\'emoires de la Soci\'et\'e
  Math\'ematique de France \textbf{33} ({1973}), 81--105.

\bibitem[MFK]{Mumford}
D.~Mumford, J.~Fogarty, and F.~Kirwan, \emph{{Geometric Invariant Theory}},
  Ergebnisse der Mathematik und ihrer Grenzgebiete 34, Springer, Third Enlarged
  Edition (2002), 292 p.

\bibitem[Nak98]{NakKM}
H.~Nakajima, \emph{{Quiver varieties and Ka\v c-Moody algebras}}, Duke Math.
  J.. \textbf{91} ({1998}), no.~3, 515--560.

\bibitem[Nak01a]{Nakajima}
\bysame, \emph{{Quiver varieties and finite dimensional representations of
  quantum affine algebras }}, J. Amer. Math. Soc. \textbf{14} ({2001}),
  145--238.

\bibitem[Nak01b]{NakTens}
\bysame, \emph{{Quiver varieties and tensor products}}, Invent. Math.
  \textbf{146} ({2001}), 399--449.

\bibitem[Nak01c]{NakqtConj}
\bysame, \emph{{$t$-analogs of the $q$-characters of finite dimensional
  representations of quantum affine algebras}}, Physics and Combinatorics,
  Proceedings of the Nagoya 2000 International Workshop (2001), 195--218.

\bibitem[Nak04]{NakqtAxiom}
\bysame, \emph{{Quiver varieties and $t$-analogs of $q$-characters of quantum
  affine algebras}}, Annals of mathematics \textbf{160} ({2004}), no.~3,
  1057--1097.

\bibitem[Ser58]{Serre}
J.-P. Serre, \emph{{Espaces fibr\'es alg\'ebriques}}, S\'eminaire Claude
  Chevalley \textbf{3} ({1958}), no.~1, 1--37.

\bibitem[Tho87]{Thomason}
R.~Thomason, \emph{{Algebraic K-theory of group scheme actions, in Algebraic
  topology and algebraic K-theory}}, Ann. of Math. Studies \textbf{113}
  ({1987}), 539--563.

\bibitem[Vas98]{Vass}
E.~Vasserot, \emph{{Affine quantum groups and equivariant K-theory}},
  Transformation Groups \textbf{3} ({1998}), no.~3, 269--299.

\bibitem[Voi]{Voisin}
C.~Voisin, \emph{{Hodge theory and complex algebraic geometry, vol. 1}},
  Cambridge Studies in Advanced Mathematics, 76.

\bibitem[VV02]{VV}
M.~Varagnolo and E.~Vasserot, \emph{{Standard modules of quantum affine
  algebras}}, Duke Math. J. \textbf{111} ({2002}), no.~3, 509--533.

\bibitem[YZ12]{YZ}
C.~A.~S. Young and R.~Zegers, \emph{{On $q, t$-characters and the $\ell$-weight
  Jordan Filtration of Standard
  $\mathrm{U}_q(\widehat{\mathfrak{sl}}_2)$-modules }}, Int. Math. Res. Notices ({2012}), 2012 (10), 2179--2211.

\end{thebibliography}

\providecommand{\bysame}{\leavevmode\hbox to3em{\hrulefill}\thinspace}
\providecommand{\MR}{\relax\ifhmode\unskip\space\fi MR }
\providecommand{\MRhref}[2]{%
  \href{http://www.ams.org/mathscinet-getitem?mr=#1}{#2}
}
\providecommand{\href}[2]{#2}

\end{document}